\numberwithin{equation}{section}
\numberwithin{figure}{section}
\newtheorem{thm}{Theorem}[section]
\newtheorem{lemma}[thm]{Lemma}
\newtheorem{prop}[thm]{Proposition}
\newtheorem{proposition}[thm]{Proposition}
\newtheorem{cor}[thm]{Corollary}
\newtheorem{defi}[thm]{Definition}
\newtheorem{definition}[thm]{Definition}
\theoremstyle{definition}
\newtheorem{exa}[thm]{Example}
\newtheorem{example}[thm]{Example}
\newtheorem{rem}[thm]{Remark}
\newtheorem{remark}[thm]{Remark}
\newtheorem*{ack}{Acknowledgments}
\newcommand{\C}{\mathbb{C}}
\newcommand{\TT}{\mathbb{T}}
\newcommand{\Fbold}{\mathbf{F}}
\newcommand{\RP}{\mathbb{R}\mathbf{P}}
\newcommand{\F}{\mathcal{F}}
\newcommand{\G}{\mathcal{G}}
\renewcommand{\epsilon}{\varepsilon}
\newcommand{\R}{\mathbb{R}}
\newcommand{\Z}{\mathbb{Z}}
\newcommand{\D}{\mathcal{D}}
\newcommand{\BM}{{\text{BM}}}
\newcommand{\fund}{\Omega}
\newcommand{\EE}{\mathcal{E}}
\newcommand{\EEE}{\mathcal{E}}
\newcommand{\CCC}{\mathcal{C}}
\newcommand{\SSS}{\mathcal{S}}
\renewcommand{\F}{\mathcal F}
\renewcommand{\G}{\mathcal G}
\newcommand{\FF}{\mathbb F_2}
\renewcommand{\geq}{\geqslant}
\renewcommand{\ge}{\geqslant}
\renewcommand{\leq}{\leqslant}
\renewcommand{\le}{\leqslant}
\newcommand{\bary}{\operatorname{Bar}}
\newcommand{\Card}{\operatorname{Card}}
\newcommand{\Hom}{\operatorname{Hom}}
\newcommand{\last}{\operatorname{last}}
\newcommand{\first}{\operatorname{first}}
\newcommand{\Vol}{\operatorname{Vol}}
\newcommand{\bw}{\bigwedge\nolimits}
\newcommand{\Fac}{\operatorname{Fac}}
\newcommand{\rk}{\operatorname{rk}}
\begin{document}

\title{Combinatorial patchworking: back from tropical geometry}

\author{Erwan Brugall\'e}
\address{Erwan Brugall\'e, Universit\'e de Nantes, Laboratoire de
  Math\'ematiques Jean Leray, 2 rue de la Houssini\`ere, F-44322 Nantes Cedex 3,
France}
\email{erwan.brugalle@math.cnrs.fr}

\author{Luc\'ia L\'opez de Medrano}
\address{Luc\'ia L\'opez de Medrano, Unidad Cuernavaca del Instituto de Matemáticas, UNAM,
Mexico}
\email{lucia.ldm@im.unam.mx}

\author{Johannes Rau}
\address{Johannes Rau, Universidad de los Andes, Carrera 1 no.~18A-12, Bogotá, Colombia.}
\email{j.rau@uniandes.edu.co}

\subjclass{Primary 14P25, 52B70; Secondary 14F25}
\keywords{}

\begin{abstract}
We show that,
once translated to the dual setting of convex
 triangulations of lattice polytopes,
results and  methods from
\cite{ArnRenSha18}, \cite{RenSha18}, \cite{JRS-Lefschetz11Theorem} and \cite{RenRauSha22} 
extend to non-convex
triangulations.
So, while the translation of Viro's 
patchworking method to the setting 
of tropical hypersurfaces
has inspired several tremendous 
developments over the last two decades,
we return to the original 
polytope setting 
in order to generalize and simplify 
some results regarding the topology of $T$-submanifolds of real toric varieties. 
\end{abstract}

\maketitle
\hfill\textit{
  \begin{tabular}{l}
Bourré de complexes\\ Et tout a changé
  \end{tabular}}

\hfill Boris Vian
\vspace{3ex}

\tableofcontents

\subsection*{Notation}
Throughout the text $\FF$ denotes the field $\Z/2\Z$.
If $X$ is  a topological space, we denote by $b_i(X)$ the $i$-th Betti
number of $X$ with coefficient in $\FF$.
A lattice
polytope  $\Delta\subset \R^n$ is a convex polytope
with vertices in $\Z^n$. 
Such a polytope defines a complex toric
variety $\mbox{Tor}_\C(\Delta)$, and $\Delta$ is said to be
non-singular if it is full-dimensional and if $\mbox{Tor}_\C(\Delta)$ is non-singular.
In this case, we
denote by $h^{p,q}(\Delta)$ the corresponding Hodge number of a
non-singular hypersurface in $\mbox{Tor}_\C(\Delta)$ with Newton
polytope $\Delta$.
More generally, we denote by
$h^{p,q}(\Delta^k)$  the $(p,q)$-Hodge number of a
non-singular complete intersection of $k$
hypersurfaces in $\mbox{Tor}_\C(\Delta)$ with Newton
polytope $\Delta$.
By convention such a complete intersection is
defined to  be $\mbox{Tor}_\C(\Delta)$ when $k=0$.
The standard complex
conjugation on $\C^*$ induces a real structure on
 the complex algebraic variety  $\mbox{Tor}_\C(\Delta)$,
 whose real part is
 denoted by  $\mbox{Tor}_\R(\Delta)$.

\section{Introduction}

\subsection{Betti numbers of $T$-manifolds}\label{sec:patch1}

Let us start by briefly recalling Viro's combinatorial patchworking for
projective
hypersurfaces. Let $\Delta_{n}\subset\R^n$ be the standard $n$-simplex
with vertices
\[
0,\quad (1,0,0,\cdots,0),\quad (0,1,0,\cdots,0),\quad \cdots,\quad
(0,0,\cdots,0,1). 
\]
Choose an integer $d\ge 1$, a  triangulation $\Gamma$ of
the $d$-dilation $d\Delta_n$ of $\Delta_n$, and
a sign distribution on $d\Delta_n$, that is to
say a function
$\epsilon:d\Delta_n\cap\Z^n \to \FF$ (see Figure \ref{fig:patch}a).
We denote by $\widetilde{d\Delta}_n$ and
$\widetilde \Gamma$ the union of all copies of $d\Delta_n$ and
$\Gamma$ under successive orthogonal symmetries with respect to
coordinate hyperplanes of $\R^n$.
Extend also the sign distribution $\epsilon$  to
$\widetilde{d\Delta}_n\cap\Z$ using the following rule
(see Figure \ref{fig:patch}b):
given $(i_1,\cdots, i_n)\in d\Delta_n\cap\Z^n$ and
$(s_1,\cdots,s_n)\in \FF^n$, define
\[
\epsilon\big((-1)^{s_1}i_1,\cdots, (-1)^{s_n}i_n\big)=(-1)^{s_1i_1+\cdots
  +s_ni_n}\
\epsilon(i_1,\cdots,i_n).
\]
\begin{figure}[h]
\centering
\begin{tabular}{ccccc}
  \includegraphics[height=5cm, angle=0]{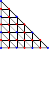}
  & \hspace{1ex} &
  \includegraphics[height=5cm, angle=0]{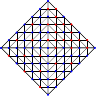}
  & \hspace{1ex} &
  \includegraphics[height=5cm, angle=0]{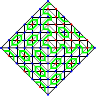}
\\  a) & & b) && c)
\end{tabular}
\caption{Example of patchworking of $6\Delta_2$;
  the sign distribution $\epsilon$ takes value 0 on
  \textcolor{blue}{$\bullet$}-points
  , and 1 on
  \textcolor{red}{$\bullet$}-points.}\label{fig:patch}
\end{figure}
In each simplex $\sigma$
of $\widetilde \Gamma$, separate the vertices with
different signs by taking the convex hull of all middle points of 
edges of $\sigma$ having different signs (see Figure \ref{fig:patch}c). Note that this convex hull
is either empty (if $\epsilon$ is constant on $\sigma$), or is a convex
polyhedron of dimension $n-1$ contained in $\sigma$.
Denote by 
$X^o_{\Gamma,\epsilon}$ the obtained $PL$-hypersurface of 
$\widetilde{d\Delta}_n$.
Finally identify, via the antipodal map,
opposite pairs of points on the boundary of
$\widetilde{d\Delta}_n$, and denote by $X_{\Gamma,\epsilon}$ the obtained
$PL$-hypersurface of the real projective space
$\RP^n$.
It is called a \emph{$T$-hypersurface}.

The subdivision $\Gamma$ is called \emph{convex}, or \emph{regular},
or \emph{coherent}, if there exists
a convex piecewise linear function $\lambda:d\Delta_n\to \R$ whose
domains of linearity are exactly the  facets of $\Gamma$.
When $\Gamma$ is convex,  Viro's combinatorial patchworking Theorem
\cite{V1,V2,IV2,GKZ} states that the $T$-hypersurface $X_{\Gamma,\epsilon}$ is
isotopic in $\RP^n$ to the set of real solutions of the polynomial equation
\[
\sum_{(i_1,\cdots,i_n)\in \text{Vert}(\Gamma)}
(-1)^{\epsilon(i_1,\cdots,i_n)}t^{\lambda(i_1,\cdots,i_n)
}x_1^{i_1}x_2^{i_2}\cdots x_n^{i_n}x_{n+1}^{d-i_1-\cdots -i_n}=0,
\]
where $\mbox{Vert}(\Gamma)$ denotes the set of vertices of the
triangulation $\Gamma$,
for $t$ a small enough positive real number.
Hence, this beautiful result establishes a bridge between
real algebraic geometry and combinatorics.
Nevertheless, 
the construction of  $X_{\Gamma,\epsilon}$ makes
sense even if $\Gamma$ is not convex, 
in which case the relations of
$X_{\Gamma,\epsilon}$ to real algebraic geometry remains  unclear.
We refer the interested reader to
\cite{Loe,IS,IS2,Br3,Br4} for some investigations in this direction.

We restricted ourselves so far to projective hypersurfaces for simplicity,
nevertheless the  construction of $T$-hypersurfaces  actually makes sense for any
lattice polytope $\Delta$ (only the gluing rule for the $2^n$
copies of $\Delta$ has to be adapted, see Section
\ref{sec:realstructures}). One then obtains
a $PL$-hypersurface  $X_{\Gamma,\epsilon}$ in
$\mbox{Tor}_\R(\Delta)$ out of a triangulation $\Gamma$ of $\Delta$
and a sign distribution on $\Delta$,
 and Viro's
combinatorial patchworking Theorem still holds true when $\Gamma$ is convex.

In connection to Hilbert's 16th problem,
one is thus interested in exploring the range of possibilities
for the topology of
$X_{\Gamma,\epsilon}$. 
This paper fits into this perspective.
While the general situation remains poorly understood, Itenberg
conjectured almost 20 years ago
that for convex and \emph{unimodular} triangulations, Betti numbers of
$T$-hypersurfaces are bounded by diagonal sums of
Hodge numbers of the corresponding
projective hypersurfaces.
A triangulation $\Gamma$ is said to be unimodular, or \emph{primitive}, if
all facets of $\Gamma$ have lattice volume 1, or
equivalently  are the standard $n$-simplex $\Delta_n$ up to translations
and the action
of $GL_n(\Z)$.
Itenberg's conjecture has recently been proved by Renaudineau and Shaw
in \cite{RenSha18}. In this note we show that
the approach by Renaudineau and Shaw extends to non-convex triangulations.

\begin{thm}\label{thm:main hyp}
Let $\Delta\subset \R^n$ be a non-singular lattice polytope, let
$\Gamma$ be a unimodular triangulation of $\Delta$, and let
$\epsilon$ be a sign 
distribution on $\Delta$. Then one has
\[
\forall p\ge 0,\qquad b_p(X_{\Gamma,\epsilon})\le \sum_{q\ge 0}h^{p,q}(\Delta).
\]
\end{thm}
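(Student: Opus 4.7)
The plan is to adapt the Renaudineau-Shaw spectral sequence argument \cite{RenSha18} to a purely combinatorial framework in which the triangulation $\Gamma$ is allowed to be non-convex. Their original argument studies the real part of the smooth tropical hypersurface associated to a convex unimodular triangulation $\Gamma$, filters its cellular chain complex by a ``tropical degree'', and identifies the $E_1$ page of the resulting spectral sequence with the tropical $(p,q)$-homology of the hypersurface, which in turn equals $h^{p,q}(\Delta)$ by the theorem of Itenberg-Katzarkov-Mikhalkin-Zharkov. The observation driving our proof is that, once everything is dualized back to the polytope $\Delta$, the bookkeeping only involves local combinatorial data at the faces of $\Gamma$, so that convexity of $\Gamma$ becomes dispensable.

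Concretely, we first describe $X_{\Gamma,\epsilon}$ as a CW complex built from $(\widetilde \Gamma, \epsilon)$ after antipodal identification on the boundary of $\widetilde \Delta$, and filter the corresponding $\FF$-cellular chain complex by a combinatorial analogue of the tropical filtration; the pieces of the filtration are indexed by the dimension of the smallest face of $\Gamma$ containing a given cell of $X_{\Gamma,\epsilon}$. Next, we attach to each face $\sigma$ of $\Gamma$ a coefficient space $\F_p(\sigma)$ built from wedge powers of the integral directions of $\Delta$ transverse to $\sigma$, and verify that the associated graded of the filtration identifies with the cellular complex of $\Gamma$ with coefficients in $\F_p$. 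Granted a bound $\dim H_q(\Gamma; \F_p) \le h^{p,q}(\Delta)$ for any unimodular $\Gamma$, the spectral sequence then delivers $b_p(X_{\Gamma,\epsilon}) \le \sum_q h^{p,q}(\Delta)$.

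The main obstacle is this final dimension bound. When $\Gamma$ is convex, the sheaves $\F_p$ recover the usual tropical $(p,q)$-homology coefficients, and the desired equality is a consequence of \cite{ArnRenSha18}; but in the non-convex case there is no ambient tropical variety to appeal to. Instead, one must argue combinatorially, using only that unimodularity forces the links of faces of $\Gamma$ to be standard simplices, together with the structure of $\Delta$ near its boundary. We expect this to proceed by induction on the skeleta of $\Gamma$, combined with a local Mayer-Vietoris analysis of $\F_p$ at each face, essentially re-doing the core computation of \cite{ArnRenSha18} in the purely polytope-dual language, stripped of its tropical interpretation. This is the conceptual heart of the argument.
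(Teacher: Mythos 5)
Your high-level architecture coincides with the paper's: a filtered $\FF$-chain complex for $X_{\Gamma,\epsilon}$ whose associated graded is a complex with coefficients in cosheaves $\F_p$, a spectral sequence giving $\sum_p \dim E^\infty_{p,q} \le \sum_p \dim E^1_{p,q}$, and a reduction to the bound $\dim H_q(\Gamma;\F_p)\le h^{p,q}(\Delta)$. But the proposal has a genuine gap precisely where you locate "the conceptual heart": the statement $h_q(\Gamma;\F_p)= h^{p,q}(\Delta)$ for \emph{non-convex} unimodular $\Gamma$ is not proved, only conjectured to follow from "induction on the skeleta of $\Gamma$ combined with a local Mayer--Vietoris analysis". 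This is exactly Theorem \ref{thm:hodge} (for $k=1$), which is one of the paper's two main results and occupies most of its length; without it your argument does not close. The paper does not proceed by induction on skeleta but by the Danilov--Khovanski\u{\i} triad: (i) Heredity, $h_q(\Gamma;\F^1_p)=h_q(\Gamma;\F^0_p)=h^{p,q}(\mbox{Tor}_\C(\Delta))$ for $p+q<n-1$, obtained from a spectral sequence over the face poset of $\Delta$ whose only nonzero column is computed via the orbit stratification of affine toric charts; (ii) Poincar\'e duality $h_q(\Gamma;\F^1_p)=h_{n-1-q}(\Gamma;\F^1_{n-1-p})$, which requires first proving that the cell-pair poset $\Xi$ is the face poset of a regular $CW$-structure on $\Delta$ and then running a cap-product argument with a fundamental class over open stars; (iii) an explicit $p$-characteristic computation matching $\chi(\F^1_p)$, expressed through the face numbers $\nu_F(i)$, against the Di Rocco--Haase--Nill formula via a binomial identity. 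Each of these steps needs a genuinely convexity-free argument, and none is supplied or even correctly anticipated by your sketch.

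Two further imprecisions. First, the filtration is misdescribed: filtering cells of $X_{\Gamma,\epsilon}$ "by the dimension of the smallest face of $\Gamma$ containing" them would merely produce the stratification spectral sequence, whose graded pieces are direct sums of chain groups of strata and cannot yield the wedge-power coefficients $\F_p$. The correct filtration (from \cite{RenSha18}, Proposition \ref{prop:filtration} here) lives on the coefficient spaces $\SSS(F,\sigma)=\FF^{\EE(F,\sigma)}$ attached to each cell pair, with $\SSS_p/\SSS_{p+1}\cong\F_p^1$. Second, the relevant homology is not that of $\Gamma$ alone but of the cell-pair poset $\Xi\subset\Fac(\Delta)\times\Gamma^{\text{op}}$, which records the toric boundary strata; omitting the $\Fac(\Delta)$ factor breaks both the identification with $H_\bullet(X_{\Gamma,\epsilon};\FF)$ and the comparison with $h^{p,q}(\Delta)$.
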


When the subdivision $\Gamma$ is convex, this statement is exactly
\cite[Theorem 1.4]{RenSha18}.
Theorem \ref{thm:main hyp} has first been proved independantly  by
Haas \cite{Haa2} and Itenberg \cite{Ite95}
for $T$-curves,
and by Itenberg  \cite{Ite97} for $T$-surfaces in $\RP^3$.
Renaudineau-Shaw's results \cite{RenSha18} sits at the crossroad
  of algebraic geometry and combinatorics, and Theorem
  \ref{thm:main hyp} provides a combinatorial generalization. An
 algebro-geometric
  generalization of \cite[Theorem 1.4]{RenSha18} has recently been proposed by
  Ambrosi and Manzaroli in \cite{AmbMan22}.

\medskip
In this paper we actually study not only $T$-hypersurfaces, but
\emph{$T$-manifolds} of arbitrary codimension in arbitrary
non-singular real toric manifolds.
To do so, the data of a sign distribution 
is replaced by a \emph{real phase structure}  
on the $k$-skeleton of
$\Gamma$
in analogy to the real phase structures studied in 
\cite{RenRauSha21}.
 Given such real phase structure $\EE$, we
 construct a $PL$-manifold  $X_{\Gamma,\EE}$, called a $T$-manifold,
of codimension $k$ in
$\mbox{Tor}_\R(\Delta)$. We refer to Section
\ref{sec:realstructures} for precise definitions.
The next theorem is the main result of this paper, and
specializes to Theorem \ref{thm:main hyp} when $k=1$.

\begin{thm}\label{thm:main}
Let $\Delta\subset \R^n$ be a non-singular lattice polytope, let
$\Gamma$ be a unimodular triangulation of $\Delta$, and let $\EE$ be a
real phase structure on the $k$-skeleton of
$\Gamma$. Then one has
\[
\forall p\ge 0,\qquad b_p(X_{\Gamma,\EE})\le \sum_{q\ge 0}h^{p,q}(\Delta^k),
\]
with equality when $k=0$ or $k=n$.
\end{thm}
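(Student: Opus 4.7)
The plan is to follow the filtration-and-spectral-sequence strategy of Renaudineau--Shaw \cite{RenSha18} for tropical hypersurfaces, together with its complete-intersection extension in \cite{RenRauSha22}, but carried out entirely on the polytope side so that the convexity of $\Gamma$ plays no role. First I would construct a cellular decomposition of $X_{\Gamma,\EE} \subset \mbox{Tor}_\R(\Delta)$ whose cells are indexed by pairs consisting of a cell of (a refinement of) $\widetilde{\Gamma}$ together with a local piece of the real phase datum $\EE$ lying over it. On the cellular chain complex $C_*(X_{\Gamma,\EE};\FF)$ I would then introduce a filtration by ``sign-coset depth'' induced by $\EE$, in direct analogy with the Renaudineau--Shaw filtration on the real phase of a tropical variety.

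The decisive step is to identify the $E_1$ page of the associated spectral sequence, cell by cell, with a direct sum of purely combinatorial local contributions determined by the unimodular stars of the faces of $\Gamma$. This is where the translation from tropical varieties back to triangulations pays off: no auxiliary convex piecewise-linear function is required, only the local combinatorial data of $\Gamma$. Once the identification is established, the usual spectral sequence inequality gives
\[
b_p(X_{\Gamma,\EE}) \le \sum_{q \ge 0} \dim_{\FF} E_1^{p,q},
\]
and it remains to match the right-hand side with $\sum_q h^{p,q}(\Delta^k)$. For unimodular $\Gamma$ one can sum the local contributions by invoking Danilov--Khovanskii-type combinatorial formulas for $h^{p,q}(\Delta^k)$, whose inputs are manifestly local and combinatorial.

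The boundary equality cases should fall out with little extra work. For $k=0$ one has $X_{\Gamma,\EE} = \mbox{Tor}_\R(\Delta)$ and the claim becomes the classical $\FF$-maximality of real toric varieties; for $k=n$ the $T$-manifold is a finite set and both sides become an enumeration determined solely by the vertex data, so the inequality is in fact an equality. The main technical obstacle I anticipate is precisely the $E_1$ identification in the absence of a tropical hypersurface: one must re-derive, in the purely simplicial language of $\Gamma$, the portion of \cite{RenSha18,RenRauSha22} that identifies the graded pieces of the filtered complex with tropical $(p,q)$-homology, and verify that only unimodularity of $\Gamma$, not convexity, is actually used in those arguments. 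Once this is in place, the passage from hypersurfaces to complete intersections of arbitrary codimension $k$ should proceed along the same inductive lines as in \cite{RenRauSha22}, with the real phase structure on the $k$-skeleton providing the necessary bookkeeping.
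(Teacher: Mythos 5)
Your overall architecture is the same as the paper's: a cellular/cosheaf model for $X_{\Gamma,\EE}$ indexed by triples $(F,\sigma,s)$ (the paper's $\Xi(\EE)$ and sign cosheaf $\SSS$), a filtration whose graded pieces are the cosheaves $\F_p^k$, and the spectral sequence inequality $\dim E^\infty\le\dim E^1$. The construction of the filtration and the identification of its graded pieces indeed carry over from \cite{RenSha18,RenRauSha22} essentially verbatim, as you anticipate.

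The genuine gap is in the step where you "match the right-hand side with $\sum_q h^{p,q}(\Delta^k)$" by "summing the local contributions by invoking Danilov--Khovanskii-type combinatorial formulas, whose inputs are manifestly local and combinatorial." What is local is only the \emph{Euler characteristic}: the stalk dimensions $\dim\F_p^k(F,\sigma)$ are computed face by face (Lemma \ref{lem:dim Fp}), and summing them with signs recovers the $p$-characteristics $\sum_q(-1)^q h^{p,q}(\Delta^k)$ of Danilov--Khovanski\u{\i} (Proposition \ref{prop:eq euler}). This determines the individual ranks $\dim E^1_{p,q}=h_q(\Gamma;\F_p^k)$ only in one remaining degree, namely $p+q=n-k$. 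To pin down all the others one needs two genuinely global inputs that your proposal does not supply: (i) Heredity, $h_q(\Gamma;\F_p^k)=h_q(\Gamma;\F_p^0)$ for $p+q<n-k$, obtained from a second spectral sequence over the face poset of $\Delta$ (Proposition \ref{prop:her}); and (ii) Poincar\'e duality $h_q(\Gamma;\F_p^k)=h_{n-k-q}(\Gamma;\F_{n-k-p}^k)$, which in the absence of convexity cannot be quoted from the tropical literature and is the technical heart of the paper --- one must show that the pairs $(F,\sigma)$ index a regular $CW$-structure on $\Delta$ (Proposition \ref{prop:CWstructure}) and then redo the cap-product argument of \cite{JRS-Lefschetz11Theorem} in this simplicial setting. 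Without (i) and (ii) the alternating-sum identity cannot be upgraded to the termwise identity $\dim E^1_{p,q}=h^{p,q}(\Delta^k)$ that your inequality requires, so this step needs to be filled in before the proof is complete. (Your treatment of the equality cases $k=0$ and $k=n$ is fine.)
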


Again, when $\Gamma$ is convex, there already exists a tropical counterpart:
In this case, the data of $\EE$ corresponds to 
a real phase structure, in
the sense of \cite[Section 2]{RenRauSha21}, on the $k$-th stable
intersection of a tropical hypersurface dual to $\Gamma$.
Our $T$-manifold corresponds to the associated \emph{real patchwork},
whose properties are discussed for general non-singular tropical varieties with real
phase structures in \cite{RenRauSha22}.
In particular, Theorem \ref{thm:main}  is then a straightforward combination of
\cite[Theorem 1.2, Section 2.6]{RenRauSha22} with Theorem \ref{thm:hodge} below.

We point out that
$T$-manifolds do not seem to be related to the
generalization to complete intersections by Sturmfels \cite{Stur94}
 of Viro's
patchworking Theorem. Sturmfels's
construction produces a $PL$-manifold of codimension $k$ out of
a
\emph{mixed subdivision} of
$k$  triangulations of  $\Delta$.
This $PL$-manifold depends
not only on the initial triangulations but also heavily 
on the chosen mixed subdivision.
Here our construction of
$T$-manifolds does not make use of any
mixed subdivision, but restricts in
 return to considering $k$ times the same subdivision of $\Delta$.
 In particular we do not know, even when $\Gamma$ is convex,
 whether a $T$-manifold in
 $\mbox{Tor}_\R(\Delta)$ is always isotopic to the real part of
 a complete intersection
 of real algebraic hypersurfaces
 in $\mbox{Tor}_\R(\Delta)$ with Newton polytope $\Delta$.
So, it might be interesting to study this further and to compare 
the  range of topological possibilities of both constructions. 

Finally, note that Theorem \ref{thm:main} has been known for a long
time in the cases $k=0$ or $k=n$. In the case $k=n$, Theorem
\ref{thm:main} simply states that the numbers of facets of $\Gamma$
is the lattice volume of $\Delta$, which follows from the definition of
unimodularity of $\Gamma$.
When $k=0$ there exists a unique real phase structure, and in this
case $X_{\Delta,\EE}=\mbox{Tor}_\R(\Delta)$.
Hence Theorem
\ref{thm:main} is now a consequence of \cite[Section 4]{BFMV06}.

\medskip
As in
\cite{RenSha18}, we obtain as
a by-product of our proof of Theorem \ref{thm:main}
   the following relation between the Euler
characteristic of $X_{\Gamma,\EE}$, and the topological signature
$\sigma(\Delta^k)$ of a
non-singular complete intersection of $k$
hypersurfaces in $\mbox{Tor}_\C(\Delta^k)$ with Newton
polytope $\Delta$.

\begin{thm}\label{thm:chi}
  Let $\Delta\subset \R^n$ be a non-singular lattice polytope, let
$\Gamma$ be a unimodular triangulation of $\Delta$, and let $\EE$ be a real structure  on the $k$-skeleton of
$\Gamma$. Then one has
\[
\chi(X_{\Gamma,\EE})=\sigma(\Delta^k).
\]
\end{thm}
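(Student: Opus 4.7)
The plan is to obtain Theorem \ref{thm:chi} as an immediate by-product of the spectral sequence argument that establishes Theorem \ref{thm:main}, by invoking the classical fact that the Euler characteristic of a complex is preserved through the pages of a converging spectral sequence. This mirrors the strategy used by Renaudineau--Shaw \cite{RenSha18} in the convex hypersurface setting, which extends verbatim once the spectral sequence of Theorem \ref{thm:main} is available in the non-convex case.

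The first step is to recall, from the proof of Theorem \ref{thm:main}, a filtered chain complex computing $H_*(X_{\Gamma,\EE};\FF)$ whose associated spectral sequence $(E_r^{p,q},d_r)\Rightarrow H_{p+q}(X_{\Gamma,\EE};\FF)$ has a combinatorially explicit first page. The bound of Theorem \ref{thm:main} follows from the surjection $E_1\twoheadrightarrow E_\infty$ together with column-wise identities of the form
\[
\sum_q \dim_{\FF} E_1^{p,q} \;=\; \sum_q h^{p,q}(\Delta^k).
\]
Since the differentials $d_r$ may be non-zero the individual inequalities $b_p(X_{\Gamma,\EE})\le \sum_q h^{p,q}(\Delta^k)$ need not be equalities, but alternating sums are insensitive to $d_r$, so
\[
\chi(X_{\Gamma,\EE}) \;=\; \sum_{p,q}(-1)^{p+q}\dim_{\FF} E_\infty^{p,q} \;=\; \sum_{p,q}(-1)^{p+q}\dim_{\FF} E_1^{p,q}.
\]

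The second step is to substitute the explicit description of $E_1^{p,q}$ in terms of Hodge data of $\Delta^k$ and rearrange the alternating sum. After the standard reindexing dictated by the bigrading conventions of the spectral sequence, the right-hand side becomes
\[
\sum_{p,q}(-1)^q h^{p,q}(\Delta^k),
\]
which, by the Hirzebruch signature theorem applied to a non-singular complete intersection of $k$ hypersurfaces in $\mbox{Tor}_\C(\Delta)$ with Newton polytope $\Delta$, equals $\sigma(\Delta^k)$.

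The main obstacle is not conceptual but entirely book-keeping: one must verify that the bigrading of the spectral sequence constructed for Theorem \ref{thm:main} is arranged so that the double sign $(-1)^{p+q}$ coming from the Euler characteristic matches, after the reindexing encoded in the identification of $E_1$ with Hodge data, the signature sign $(-1)^q$. Since the spectral sequence produced in the non-convex case retains the same bigraded structure as its Renaudineau--Shaw counterpart, this reindexing carries over without change, and the two extremal cases $k=0$ and $k=n$ recover classical statements (the Euler--signature identity for real toric varieties from \cite{BFMV06}, respectively a trivial count of facets of $\Gamma$).
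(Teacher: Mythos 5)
Your proposal is correct and follows essentially the same route as the paper: both take the spectral sequence of the filtration $\SSS_\bullet$ of the sign cosheaf (with $E^1_{p,q}\cong H_q(\Xi;\F_p^k)$ of dimension $h^{p,q}(\Delta^k)$ by Theorem \ref{thm:hodge}, converging to $H_q(X_{\Gamma,\EE};\FF)$ via Proposition \ref{prop:signcosheafhomology}), use the invariance of the alternating sum across pages, and conclude by the Hodge index/signature theorem. The only difference is cosmetic: in the paper's indexing the chain degree $q$ is already the total degree, so the sign is $(-1)^q$ from the start and no reindexing from $(-1)^{p+q}$ is needed.
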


When $k=1$,
Theorem \ref{thm:chi} has originally been proved by Itenberg \cite{Ite97}
when $n=3$, and  generalized by 
Bertrand \cite{Ber2} for any $n$.
When $\Gamma$ is convex and $k=1$, Renaudineau and
Shaw gave in \cite{RenSha18}
an alternative proof of Theorem \ref{thm:chi},
later extended
to higher codimensions by Renaudineau, Rau, and Shaw \cite{RenRauSha22}.
All these proofs are combinatorial and do not use the relation of
combinatorial patchworking to real algebraic geometry given by Viro's
Theorem. An algebro-geometric proof of Theorem \ref{thm:chi}
has been proposed by
the first author in \cite{Bru22} for hypersurfaces and convex triangulations.

As mentioned above, $T$-manifolds of higher
codimensions are a priori not related to
Sturmfels' combinatorial patchworking
of complete intersections. In particular when $k\ge 2$, and
even if $\Gamma$ is convex, Theorem \ref{thm:chi} seems to be 
disjoint from the  results in \cite{BerBih07} and \cite{Bru22}.

\subsection{Hodge numbers, Poincaré duality, and Heredity}
As mentioned in the beginning of this introduction,
this paper is built upon the observation that
the aforementioned \enquote{tropical} works 
do not seem to use in an essential way  the convexity of the triangulations
dual to the tropical hypersurfaces under study.
This will probably seem obvious to experts once formulated, yet it
requires some efforts to rigorously prove this observation. Indeed
 the above works refer at some points to former 
results about abstract non-singular tropical manifolds (e.g. Poincaré
duality \cite{JRS-Lefschetz11Theorem}), and explicitly use convexity
arguments at some steps of their reasoning (e.g. in the computation of
tropical $p$-characteristics in \cite[Proof of Theorem 1.8]{ArnRenSha18}). 
Altogether, our task is to translate
the aforementioned results
to the dual setting  of unimodular
triangulations
and to free them from the convexity hypothesis 
(in particular, from any reference to tropical manifolds).

Following \cite{RenSha18,RenRauSha22}, 
the proof of Theorem \ref{thm:main}
is based on the computation of homology groups of two families of
\emph{combinatorial cosheaves}
 defined on the poset $\Xi$ of \emph{cell pairs} of $\Gamma$.
Such a cell pair is defined
to be  a couple $(F,\sigma)$ with
$F$  a face of $\Delta$ and $\sigma$  a face of $\Gamma$ contained
in $F$.
A cosheaf $\F$ on $\Xi$ is the data of an
$\FF$-vector space $\F(F,\sigma)$ 
associated to each cell pair $(F,\sigma)$ of $\Gamma$,
 satisfying some compatibility relations.
We refer to 
Section \ref{sec:hom} for a precise definition, as well
as for the definition 
of  homology groups
$H_{q}(\Gamma;\F)$ of $\F$. The rank
of $H_{q}(\Gamma;\F)$, as an $\FF$-vector space, is denoted by
$h_{q}(\Gamma;\F)$.

In a first step of the proof of Theorem \ref{thm:main}, we consider
for each integers $p$ and $k$ a cosheaf $\F_p^k$ on $\Xi$
that recovers the Hodge numbers
$h^{p,q}(\Delta^k)$. 
Given a rational polyhedron $\sigma \subset \R^n$, we  denote by
$T(\sigma)$ the tangent space of $\sigma$ (that is, the linear space generated
by the vectors $v-w$, $v,w \in \sigma$)
and set $T_\Z(\sigma) = T(\sigma) \cap \Z^n$ and 
$T_{\FF}(\sigma) = T_\Z(\sigma) \otimes \FF$. 
Moreover, we use 
$\sigma^\perp$ as shorthand for $T_{\FF}(\sigma)^\perp \subset (\FF^n)^\vee$.
Now, for any $p,k\ge 0$, the cosheaf $\F_p^k$ on $\Xi$ is defined by
  \[
  \forall (F,\sigma)\in\Xi,\quad
  \F_p^k(F,\sigma)=\sum_{\substack{\tau\subset \sigma\\\dim \tau=k}}
  \bw^p \left(\tau^\perp/F^\perp \right).
  \]
When $\Gamma$ is convex, 
these cosheaves have been introduced in the
dual setting of tropical subvarieties by
Itenberg, Katzarkov, Mikhalkin and Zharkov in \cite{IKMZ19}.
More precisely, $\F_p^k$ corresponds to the cosheaf of framing groups $\F_p$ of
the $k$-th stable intersection $X^k$ of a tropical hypersurface $X$ dual to $\Gamma$. 
In particular, 
a result in \cite{IKMZ19} shows that 
when $\Gamma$ is convex we have $h_{q}(\Gamma;\F_p^k) = h_{q}(X^k;\F_p)$.

The next theorem is the second main statement of our paper.

\begin{thm}\label{thm:hodge}
  For any $p,q$ and $k$,  one has
  \[
  h_{q}(\Gamma;\F_p^k)=h^{p,q}(\Delta^k).
  \]
\end{thm}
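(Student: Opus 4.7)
The plan is to adapt the proof of the analogous statement from \cite{IKMZ19}, where it was established when $\Gamma$ is convex via the intermediate object of a dual tropical hypersurface, to arbitrary unimodular triangulations. Both $\F_p^k$ and its homology $h_q(\Gamma;\F_p^k)$ are defined purely in terms of the combinatorial and lattice data of $\Gamma$, with no reference to a tropical or convexity structure, so the task reduces to inspecting the argument of \cite{IKMZ19} and isolating the steps where convexity enters, in order to verify that it plays no essential role.

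A useful first reduction is the direct sum decomposition
\[
\F_p^k \;=\; \bigoplus_{\substack{\tau\in\Gamma\\ \dim\tau=k}}\F_p^{k,\tau},
\qquad
\F_p^{k,\tau}(F,\sigma)=\begin{cases}\bigwedge^p\bigl(\tau^\perp/F^\perp\bigr) & \text{if }\tau\subset\sigma,\\ 0 & \text{otherwise,}\end{cases}
\]
which reduces the computation of $h_q(\Gamma;\F_p^k)$ to understanding, for each $k$-face $\tau$ of $\Gamma$, the homology of a cosheaf supported on cell pairs $(F,\sigma)$ with $\tau\subset\sigma\subset F$. Each such local piece is governed by a Koszul-type complex built from the lattice quotients $\tau^\perp/F^\perp$ as $F$ runs over the faces of $\Delta$ containing $\tau$, and this local computation is manifestly independent of any convexity data.

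For the global assembly I would argue by induction on $\dim\Delta$ using a heredity short exact sequence that separates cell pairs $(F,\sigma)$ with $F=\Delta$ from those with $F\subsetneq\Delta$. The long exact sequence in cosheaf homology expresses the contribution of proper faces as a sum over faces $F\subsetneq\Delta$ of the analogous quantities for the induced unimodular triangulations $\Gamma|_F$, which are handled by the inductive hypothesis, while the ``interior'' term $F=\Delta$ captures the primitive part. The connecting maps in this long exact sequence are combinatorial in nature, built from restriction of the lattice data, and in particular do not require $\Gamma$ to be convex.

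The main obstacle will be matching the combinatorial output of the above reductions with the Danilov--Khovanskii formula expressing $h^{p,q}(\Delta^k)$ as a sum over faces of $\Delta$ of Ehrhart-type contributions. Concretely, one must group the $k$-faces $\tau$ by the minimal face $F(\tau)$ of $\Delta$ containing them and verify that, for each fixed face $F$ of $\Delta$, the collected pieces $\bigwedge^p(\tau^\perp/F^\perp)$ assemble through the Koszul and heredity sequences into exactly the Danilov--Khovanskii summand for $F$. In \cite{IKMZ19} this bookkeeping was organized through the dual tropical variety, where tropical Poincaré duality \cite{JRS-Lefschetz11Theorem} played a role; here one has to redo the identification directly on the triangulation side, using only unimodularity and the explicit local Koszul complexes. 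Carrying this out and confirming that at every page of the resulting spectral sequence the differentials depend only on the abstract lattice-combinatorial data of $\Gamma$ is the technical heart of the argument.
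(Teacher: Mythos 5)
There is a genuine gap, and it starts at the very first step. The claimed direct sum decomposition
\[
\F_p^k \;=\; \bigoplus_{\substack{\tau\in\Gamma\\ \dim\tau=k}}\F_p^{k,\tau}
\]
is false: by definition $\F_p^k(F,\sigma)=\sum_{\tau\subset\sigma,\;\dim\tau=k}\bw^p\left(\tau^\perp/F^\perp\right)$ is a \emph{sum of subspaces} inside $\bw^p\left((\FF^n)^\vee/F^\perp\right)$, and these subspaces overlap heavily. The whole content of Lemma \ref{lem:dim Fp} is that $\dim\F_p^k(F,\sigma)$ is strictly smaller than the sum of the dimensions of the individual pieces (it is computed by an inclusion--exclusion over the lattice basis adapted to $\sigma$). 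So the reduction to one cosheaf per $k$-face $\tau$, on which the rest of your plan rests, is not available. Moreover, citing \cite{IKMZ19} as the model is a misreading of the paper's setup: that reference only provides the identification $h_q(\Gamma;\F_p^k)=h_q(X^k;\F_p)$ in the convex case, not a proof of the equality with Hodge numbers that could be ``de-convexified'' cosheaf by cosheaf.

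The second problem is that the part you defer as ``the technical heart'' is in fact the entire theorem, and heredity plus local Koszul complexes cannot deliver it. Heredity (Proposition \ref{prop:her}) only controls the range $p+q<n-k$, where $\F_p^k$ agrees with $\F_p^0$ and one can invoke the toric computation of Corollary \ref{lem:eq tor}. To reach the range $p+q>n-k$ one needs Poincaré duality $h_q(\Gamma;\F_p^k)=h_{n-k-q}(\Gamma;\F_{n-k-p}^k)$ (Theorem \ref{prop:dual}), which in the non-convex setting requires first showing that $\Xi$ is the face poset of a regular $CW$-structure on $\Delta$ and then building a cap product with a fundamental class --- none of which appears in your outline. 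Finally, the remaining diagonal $p+q=n-k$ is pinned down only by the Euler-characteristic identity $\sum_q(-1)^qh_q(\Gamma;\F_p^k)=e_{\Delta,k,p}$ (Proposition \ref{prop:eq euler}), whose proof is an explicit binomial computation matching Lemma \ref{lem:dim Fp} against the Danilov--Khovanski\u{\i}/Di Rocco--Haase--Nill formula via Lemma \ref{lem:identity}. Your proposal gestures at this matching but provides no mechanism for it; without the duality and the $p$-characteristic computation the argument cannot close.
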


When $\Gamma$ is convex and $k=1$, Theorem \ref{thm:hodge} has first
been proved by Arnal, Renaudineau, and
Shaw  in \cite{ArnRenSha18}.

\begin{rem}
  We only consider $\FF$-cosheaves in this paper, since
	coefficients in $\FF$ are sufficient for the purposes of Theorem \ref{thm:main}.
	Nevertheless a large part of the text, in particular Theorem \ref{thm:hodge}
	and Theorem \ref{thm:propertylist}, can be extended to their obvious $\Z$-cosheaf
  version $\F_{p,\Z}^k$ as in \cite{ArnRenSha18, JRS-Lefschetz11Theorem}, or $\F_{p,K}^k$
	for an arbitrary field $K$. 
	See also Remark \ref{rem:Z} below.
\end{rem}

  At this point, it may be worthwhile to recall that
  all Hodge numbers $h^{p,q}(\Delta^k)$ are known since the seminal
  work \cite{DanKho86} by Danilov and Khovanski\u{\i}.
  Since we will follow an analogous strategy in the proof of Theorem
  \ref{thm:hodge}, we briefly indicate the procedure to compute
  these numbers.
  
  As a consequence of Lefschetz hyperplane section theorem, one obtains
  that
  \[
  h^{p,q}(\Delta^k)=h^{p,q}(\mbox{Tor}_\C(\Delta))\qquad
  \mbox{if }p+q<n-k.
  \]
	We refer to this property as Heredity in the following. 
  Combining this with  Danilov-Jurkiewicz
  theorem and Poincaré duality, one obtains all Hodge numbers
  $h^{p,q}(\Delta^k)$ except
   when $p+q=n-k$.
  In particular
    $h^{p,q}(\Delta^k)=0$ except possibly when
  $p=q$ or $p+q=n-k$. Now the computation of the numbers
  $h^{p,n-k-p}(\Delta^k)$ follows from the computation of the
  $p$-characteristics
  \[
  e_{\Delta,k,p}= \sum_{q\ge 0}(-1)^q h^{p,q}(\Delta^k).
  \]
Using motivic properties of  $e_{\Delta,k,p}$, Danilov and
Khovanski\u{\i} computed explicitely all  $p$-characteristics of
hypersurfaces of toric varieties,
and indicated an
algorithm to recursively compute $p$-characteristics of complete intersections
by reducing to the case of hypersurfaces. 
Following this algorithm, Di Rocco, Haase,
and Nill obtained
in \cite{DiRHaaNil19} a closed expressions for
$e_{\Delta,k,p}$ in terms of $k,p,$ and $\Delta$.
In conclusion all Hodge numbers $h^{p,q}(\Delta^k)$ are expressed in terms of
Hodge numbers of the ambient toric variety
$h^{p,q}(\mbox{Tor}_\C(\Delta))$, for which several
different calculations are available (see for example
\cite{Dan78,FMSS-IntersectionTheorySpherical,Jor-HomologyCohomologyToric}).

Hence, in order to prove Theorem \ref{thm:hodge}
we follow the strategy from complex algebraic geometry by
Danilov and Khovanski\u{\i}
which was also used in \cite{ArnRenSha18}: 
we prove that the numbers $h_{q}(\Gamma;\F_p^k)$
satisfy Heredity and Poincaré duality and
have the same $p$-characteristic as $h^{p,q}(\Delta^k)$.
For convenience, we state this here in summary
as our last main result.

\begin{thm}\label{thm:propertylist}
  The numbers $h_{q}(\Gamma;\F_p^k)$ satisfy the following properties.
	\begin{enumerate}
		\item (Heredity, Proposition \ref{prop:her}) 
			For $p+q<n-k$,
			\[ h_{q}(\Gamma;\F_p^k)=h_{q}(\Gamma;\F_p^0). \]
		\item (Poincaré duality, Theorem \ref{prop:dual}) 
			For any $p,q$ and $k$,
			\[ h_{q}(\Gamma;\F_p^k)=h_{n-k-q}(\Gamma;\F_{n-k-p}^k). \]
		\item ($p$-characteristic, Proposition \ref{prop:eq euler})
			For any $p$ and $k$,
			\[
			\sum_{q\ge 0} (-1)^qh_q(\Gamma;\F_p^k)=
			\sum_{q\ge 0} (-1)^qh^{p,q}(\Delta^k).
			\]
	\end{enumerate}
\end{thm}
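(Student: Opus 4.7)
My plan is to prove the three items in sequence, in each case mirroring the classical Danilov--Khovanski\u{\i} strategy (and its tropical incarnation by Arnal--Renaudineau--Shaw) while making sure every argument is carried out purely on the poset $\Xi$ of cell pairs, without ever invoking a convex lift of $\Gamma$ or an ambient tropical manifold. The guiding principle is that unimodularity of $\Gamma$ should provide enough rigidity at each face to make the local combinatorics match its algebro-geometric counterpart.

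\emph{Heredity.} I would argue by induction on $k$. Locally at a cell pair $(F,\sigma)$, one compares the sum over $k$-faces $\tau\subset\sigma$ defining $\F_p^k$ with the sum over $(k-1)$-faces defining $\F_p^{k-1}$, and exploits the fact that for $\tau'\subset\tau$ with $\dim\tau-\dim\tau'=1$, the quotient $\tau^\perp/F^\perp$ sits inside $\tau'^\perp/F^\perp$ as a hyperplane. A Koszul-type filtration on $\bw^p$ should assemble these inclusions into a short exact sequence of cosheaves on $\Xi$ relating $\F_p^k$, $\F_p^{k-1}$, and a correction cosheaf whose homology is supported in degrees $q\ge n-k$. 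The induced long exact sequence then transports the equality $h_q(\Gamma;\F_p^k)=h_q(\Gamma;\F_p^{k-1})$ down to the base case $k=0$ in the range $p+q<n-k$.

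\emph{Poincaré duality.} This is the step I expect to be the main obstacle, since in the convex case the analogous statement (as used in \cite{ArnRenSha18}, \cite{RenSha18}, \cite{RenRauSha22}) is inherited from the tropical Poincaré duality of \cite{JRS-Lefschetz11Theorem}, whose proof is itself convex. My plan is to exhibit an explicit chain-level pairing of cosheaves $\F_p^k \otimes \F_{n-k-p}^k \to \omega$, where $\omega$ is a dualizing cosheaf representing the fundamental class of $(\Delta,\partial\Delta)$. Unimodularity is essential here: for every face $\tau$ of $\Gamma$ it supplies a canonical generator of $\bw^{\dim\tau}T_\FF(\tau)$, which normalizes the fibrewise perfect pairings
\[
\bw^p(\tau^\perp/F^\perp)\otimes\bw^{\dim F-k-p}(\tau^\perp/F^\perp)\longrightarrow\bw^{\dim F-k}(\tau^\perp/F^\perp)\simeq\FF.
\]
The two things to verify are that these local pairings assemble into a chain map intertwining the cosheaf differentials (and the boundary inclusion $\partial\Delta\hookrightarrow\Delta$), and that the resulting pairing on homology is perfect. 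For the latter I would use a Mayer--Vietoris decomposition of $\Gamma$ along the $\Star$s of its vertices to reduce to the local model of the standard simplex $\Delta_n$, where the duality can be checked by hand.

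\emph{$p$-characteristic.} I would start from the chain-level identity
\[
\sum_{q\ge 0}(-1)^q h_q(\Gamma;\F_p^k)=\sum_{(F,\sigma)\in\Xi}(-1)^{\dim\sigma}\dim_\FF\F_p^k(F,\sigma),
\]
swap the order of summation so that the outer index runs over pairs $(\tau,F)$ with $\tau$ a $k$-face of $\Gamma$ contained in $F$, and recognise the inner sum over $\sigma$ with $\tau\o\sigma\o F$ as a reduced Euler characteristic of a link that, by unimodularity, collapses to an explicit lattice invariant. The resulting closed form can then be compared to the Di Rocco--Haase--Nill expression for $e_{\Delta,k,p}$. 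Since both sides depend only on $\Delta$ (not on the choice of $\Gamma$), this comparison reduces to a finite motivic identity that I would verify by inducting on $k$ and reproducing the recursion of \cite{DanKho86} at the level of cosheaf Euler characteristics.
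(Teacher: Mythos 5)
Your overall strategy (Danilov--Khovanski\u{\i} via Heredity, duality, and $p$-characteristic) is the paper's, but each item has a concrete gap. For Heredity, the essential ingredient you are missing is a degeneration statement: the paper first proves (Lemma \ref{lem:sigmacomplex}) that the spectral sequence of the bicomplex on $\Xi$ collapses to a single column, with $E^1_{n,s}=\bigoplus_{\dim\sigma=-s}\F^k_{p+\dim F_\sigma-n}(F_\sigma,\sigma)$, by choosing adapted coordinates at each $F_\sigma$ (using non-singularity of $\Delta$) and invoking the acyclicity of the toric complex for $\C^n$. Only this converts the local coincidence $\F^k_p(F,\sigma)=\F^0_p(F,\sigma)$ for $p\le\dim\sigma-k$ (Lemma \ref{lem:dim Fp}) into an equality of homology in the range $p+q<n-k$. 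Your long exact sequence for $\F_p^k\subset\F_p^{k-1}$ exists, but the claim that the quotient has homology "supported in degrees $q\ge n-k$" is exactly what needs proof (and the correct bound involves $p$, namely $p+q>n-k$); without the column-degeneration you cannot pass from support conditions on cell pairs to vanishing of homology in low degrees. Similarly, in the $p$-characteristic step your interchange of summation over $k$-faces $\tau$ is not available: $\F_p^k(F,\sigma)$ is a \emph{sum}, not a direct sum, of the subspaces $\bw^p(\tau^\perp/F^\perp)$, so $\dim\F_p^k(F,\sigma)$ requires the explicit inclusion--exclusion count of Lemma \ref{lem:dim Fp} (also note your sign should be $(-1)^{\dim F-\dim\sigma}$, the grading of $\Xi$, not $(-1)^{\dim\sigma}$). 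After that, the comparison with the Di Rocco--Haase--Nill formula does reduce to a finite binomial identity, as you predict.

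The most serious problem is Poincaré duality. The pairing you propose, $\bw^p(\tau^\perp/F^\perp)\otimes\bw^{\dim F-k-p}(\tau^\perp/F^\perp)\to\bw^{\dim F-k}(\tau^\perp/F^\perp)$, does not extend to $\F_p^k(F,\sigma)\otimes\F_{n-k-p}^k(F,\sigma)$: wedging elements coming from two different $k$-faces $\tau\ne\tau'$ of $\sigma$ lands outside every top exterior power $\bw^{\dim F-k}(\tau^\perp/F^\perp)$, so the "fibrewise" pairing is not well defined. The paper circumvents this by pairing the cosheaf $\F_{p'}^k$ against the \emph{dual sheaf} $\G^p_k=\Hom(\F_p^k,\FF)$ via contraction, i.e.\ a cap product with a fundamental class $\fund^k$, set up on the order complex after showing that $\Xi$ is the face poset of a regular CW structure on $\Delta$. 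Moreover, the local verification you defer to "checking by hand on the standard simplex" is precisely where the depth lies: the local models at a cell pair are the fans $H^k\times\R^l\times\TT^s$ (stable self-intersections of the standard tropical hyperplane times linear and boundary factors), and the perfectness of the pairing there is the matroidal Poincaré duality of \cite[Corollary 5.9]{JRS-Lefschetz11Theorem} --- a local statement that does not depend on convexity of $\Gamma$ and hence can be imported, but not one that follows from unimodularity alone by inspection. Your Mayer--Vietoris reduction over stars is the right gluing mechanism and matches the paper; it is the well-definedness of the pairing and the local duality input that need to be repaired.
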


When $\Gamma$ is convex and $k=1$, the items (Heredity) and ($p$-characteristic)
are proven in \cite{ArnRenSha18}. When $\Gamma$ is convex, item 
(Poincaré duality) is proven in \cite{JRS-Lefschetz11Theorem}.

\subsection{Real phase structures}

Given Theorem \ref{thm:hodge}, 
the second step in the proof of Theorem \ref{thm:main} is the
definition of real phase structures on the $k$-skeleton of
$\Gamma$. 
This is the translation 
to the dual setting of the real phase structure
for tropical varieties introduced in \cite{RenRauSha21}.
Given $\sigma$ a face of $\Gamma$, we define
$\sigma^\vee=(\FF^n)^\vee/\sigma^\perp$, and denote by
$\pi_\sigma:(\FF^n)^\vee\to \sigma^\vee$ the projection map. More
generally if $\sigma\subset\tau$, there is a natural projection map
$\pi_{\sigma,\tau}:\tau^\vee\to \sigma^\vee$.
\begin{defi}
   A \emph{real phase structure} $\EEE$ on the $k$-skeleton of
   $\Gamma$
   consists in a choice of a point
	\[
	  \EEE(\sigma) \in  \sigma^\vee
	\]
	for every $\sigma \in \Gamma$ of dimension $k$ such that
	for any $\tau \in \Gamma$ of dimension $k+1$ and any $s \in \tau^\vee$
	the set
	\[
	  \{\sigma \subset \tau : \dim(\sigma) = k \text{ and } \pi_{\sigma,\tau}(s) = \EEE(\sigma) \}
	\]
	has even cardinal.	
\end{defi}

Since $\sigma^\vee=\{0\}$ if $\dim \sigma=0$, there exists a unique
real phase structure on the vertices of $\Gamma$ ($k = 0$).
On the other extreme ($k = n$),
real phase structures on the $n$-skeleton of $\Gamma$ are also easy
to describe: they consist in an arbitrary choice of an element in
$(\FF^n)^\vee$ for all $n$-simplex of $\Gamma$.
Real phase structures on  edges of $\Gamma$ ($k = 1$)
provide an equivalent way to describe  sign distributions on
vertices of $\Delta$, see
Example \ref{ex:rps}.

Starting from a real phase structure $\EE$ on the $k$-skeleton of
$\Gamma$, we construct the $T$-manifold $X_{\Gamma,\EE}$ of codimension
$k$ in $\mbox{Tor}_\R(\Delta)$, and a \emph{sign cosheaf} $\SSS$ on $\Xi$.
We refer to Definition \ref{def:Tamnifold} for details.
As an example, we depicted in Figure \ref{fig:patch dte}
a $T$-line in $\RP^3$ (see Example \ref{ex:patch dte}).
\begin{figure}[h]
\centering
\begin{tabular}{c}
  \includegraphics[height=6cm, angle=0]{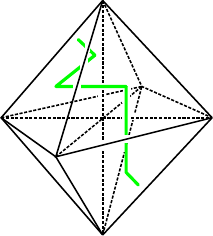}
\end{tabular}
\caption{A $T$-line in $\RP^3$.}\label{fig:patch dte}
\end{figure}
The sign cosheaf $\SSS$ on $\Xi$ is defined by
\[
\SSS(F,\sigma)=\FF^{\EE(F,\sigma)},
\]
where 
\[
\EE(F,\sigma)= \{ s \in F^\vee 
        : \pi_\tau(s) = \EEE(\tau) \text{ for some } \tau \subset
        \sigma\mbox{ with }\dim(\tau) = k\}.
\]
The sign cosheaf has first been defined for convex primitive
triangulations
in \cite{RenSha18} for $k=1$, and in \cite{RenRauSha22} for $k\ge 2$.
The proof of Theorem \ref{thm:main} now follows from the fact the
homology groups of $\SSS$ are related to both the homology groups of
$X_{\Gamma,\EE}$ and the homology groups of
$\F_{p}^k$. This is the fundamental idea from \cite{RenSha18} for
hypersurfaces,
later generalized in  \cite{RenRauSha22} in any codimension.
Here our task is simply to check that the two important statements about $\SSS$
from \cite{RenSha18,RenRauSha22} carry over to 
possibly non-convex triangulations.
First, the groups $H_p(\Xi; \SSS)$ and $H_p(X_{\Gamma, \EEE}; \FF)$ are
canonically isomorphic for any $p$, see Proposition \ref{prop:signcosheafhomology}.
Second, there exists a filtration 
of cosheaves
	\[
	  0 = \SSS_{n+1} \subset \SSS_n \subset \dots \subset \SSS_0 = \SSS
	\]
	such that $\SSS_p / \SSS_{p+1} \cong \F_p^k$, see Proposition \ref{prop:filtration}. 
From the spectral sequence associated to this filtration it follows
that 	
\begin{equation*}\label{eq:ineq}
h_q(\Gamma;\SSS)\le \sum_{p\ge 0}h_q(\Gamma;\F_p^k).
\end{equation*}
for any $p$. 
This completes the proof of Theorem \ref{thm:main}.

\begin{ack}
We are grateful to Arthur Renaudineau and Kris Shaw
for helpful comments and discussions on a preliminary version of the text. 

Part of this work has been achieved during the visit of J.R. and
L. L.d.M. at Nantes
Université. J.R. was funded by 
 the program \emph{Missions Chercheurs Invités} of Nantes Université,
 and L. L.d.M. was 
 funded by  ECOS NORD 298995, CONACyT 282937, CONACyT I1200/381/2019 and PAPIIT-IN108520. We
 thank  Laboratoire de Mathématiques Jean Leray
 for excellent working
 conditions. The author also
 thanks the France 2030 framework porgramme Centre Henri Lebesgue
 ANR-11-LABX-0020-01 for creating an attractive mathematical environment. 
  
 E. B.  is
partially supported by the grant TROPICOUNT of Région Pays de la
Loire, and the ANR project ENUMGEOM NR-18-CE40-0009-02. 
 J. R. acknowledges support from the FAPA grant by the Facultad de Ciencias, Universidad de los Andes, Bogotá.
\end{ack}

\section{Preliminaries}\label{sec:prel}

\subsection{Poset homology}\label{sec:hom}

A \emph{poset} is a set $P$ equipped with a partial order $\leq$. In this text, 
all posets will be finite. A \emph{cover relation}, denoted by $x \lessdot y$, is a pair $x < y$ such that
there exists no $z \in P$ with $x < z < y$.
A \emph{grading} of $P$ is a
function $\dim \colon P \to \Z$ such that $\dim(y) - \dim(x) = 1$ for
every cover relation
$x \lessdot y$.
Given a poset $P$, we denote by $P^{\text{op}}$ the poset with inverted partial order. 
If $P$ is graded by the function $\dim$, we equip $P^{\text{op}}$ with the grading $-\dim$.

Given a pair $x \leq y$, we denote by $[x,y] = \{z \in P : x \leq z \leq y\}$ the 
\emph{interval} between $x$ and $y$. We call $\dim(y) - \dim(x)$ the \emph{codimension}
of $(x,y)$ or the \emph{length} of $[x,y]$. 
A graded poset is \emph{thin} or \emph{satisfies the diamond property} if every
interval of length $2$ contains exactly $4$ elements. Such an interval is called a \emph{diamond}
of $P$ since schematically it looks as follows. 
\[
\begin{tikzcd}[cramped, sep=small]
     & y & \\
		z_1 \arrow[draw=none]{ru}[sloped,auto=false]{<} & & z_2 \arrow[draw=none]{lu}[sloped,auto=false]{>} \\
		& x \arrow[draw=none]{lu}[sloped,auto=false]{>} \arrow[draw=none]{ru}[sloped,auto=false]{<} & 
\end{tikzcd}
\]
Clearly, if $P$ is thin then $P^{\text{op}}$ is thin as well. 
The term \emph{thin} in this context was apparently coined by Björner
in \cite{Bjoe-PosetsRegularCw}. We refer to this work for more background and motivation. 

\medskip
Let $R$ be a ring. A \emph{$R$-cosheaf} $\F$ on a poset $P$ is a contravariant
functor from $P$ (whose morphisms are the ordered pairs $x \leq y$) to the category of $R$-modules. 
Analogously, a \emph{sheaf} on $P$ is given by a covariant functor. Since we will mostly work with cosheaves, let us spell the definition out in this case: we assign an $R$-module $\F(x)$ to any $x \in P$,
and have $R$-linear maps
$\iota_{x,y} \colon \F(y) \to \F(x)$ for every pair $x \leq y$ such
that 
\begin{equation} \label{eq:cosheaf} 
  \iota_{x,y} \circ \iota_{y,z} = \iota_{x,z}  
\end{equation}
for all triples $x \leq y \leq z$, and 
$  \iota_{x,x}= Id_{\F(x)}$ for any $x\in P$.

Let $P$ be a thin graded poset 
and $\F$ a $\FF$-cosheaf on $P$.
We consider the differential complex $C_\bullet(P;\F)$ given by 
\[
C_q(P; \F) = \bigoplus_{\dim (x) = q} \F(x),
\quad \quad \quad \partial: C_q(P; \F) \to C_{q-1}(P; \F),
\]
where the restriction of $\partial$ on $\F(x)$ is given by
\[
  \partial(\alpha)=\sum_{y \lessdot x} \iota_{y,x}(\alpha).
\]
Note that $\partial^2=0$ thanks to
the
diamond property of $P$ and the fact that we work over $\FF$. The associated homology groups are
denoted by $H_q(P; \F)$. 

\begin{rem} \label{rem:Z}
  For simplicity, in this text we restrict ourselves to cosheaves over $\FF$. 
	However, most of the 
	cosheaves and statements that we will encounter have
	analogous versions over $\Z$ or the reader's favourite ring. 
	In this generality, we additionally have to equip the poset $P$ with a \emph{balanced 
	signature} (or \emph{balanced colouring}). Here, a \emph{signature} is a map $s \colon 
	\CCC(P) \to \{+1, -1\}$ where $\CCC(P)$ denotes the set of all cover relations in $P$. 
	A signature is called \emph{balanced} if any diamond contains an odd number of $-1$'s. 
	Then the modified map $\partial = \sum_{x \lessdot y} s(x \lessdot y) \iota_{y,x}$
	still satisfies $\partial^2 =0$. It is easy to check that all the posets that
	are used in the following admit a balanced signature (obtained by choosing 
	orientations for the geometric objects in the background). 
	For more details on balanced signatures and poset homology in this generality we refer to 
	\cite[Section 2.7]{BB-CombinatoricsCoxeterGroups} and 
	\cite{Cha-ThinPosetsCw}.
\end{rem}

Given a poset $P$, a subset $U \subset P$ is \emph{open} if it is closed under 
taking larger elements, that is, 
\[
  x \in U, x < y \quad \Longrightarrow \quad y \in U.
\]
If $P$ is thin and $U \subset P$ is open, then $U$ (with the restricted partial order)
is thin as well. 
Given a cosheaf $\F$ on $P$, we denote the restriction to $U$ by $\F|_U$. 
For simplicity, we write $C_q(U; \F)$ and $H_q(U; \F)$ instead of 
$C_q(U; \F|_U)$ and $H_q(U; \F|_U)$, respectively, for the restricted differential complexes
and homology groups. 

Given two posets $P$ and $Q$, the product $P \times Q$ is a poset with partial order
\[
  (x, x') \leq (y, y') \quad \Longleftrightarrow \quad x \leq y \text{ and } x' \leq y'.
\]
If $P$ and $Q$ are graded, $P \times Q$ can be graded by setting $\dim(x,y) = \dim(x) + \dim(y)$. 
If $P$ and $Q$ are thin, then $P \times Q$ is thin as well. Indeed, an interval of length 
$2$ in the product is either constant in one factor or has the form
\[
\begin{tikzcd}[cramped, sep=small]
     & (y,y') & \\
		(x,y') \arrow[draw=none]{ru}[sloped,auto=false]{<} & & (y,x') \arrow[draw=none]{lu}[sloped,auto=false]{>} \\
		& (x,x') \arrow[draw=none]{lu}[sloped,auto=false]{>} \arrow[draw=none]{ru}[sloped,auto=false]{<} & 
\end{tikzcd}
\]
with $x \lessdot y$ and $x' \lessdot y'$. 
Given a cosheaf $\F$ on an open subset $U \subset P \times Q$, the associated
differential complex $C_\bullet(U;\F)$ can be refined into the following bicomplex
\[
\begin{tikzcd}
  & \vdots \arrow{d}{\partial_2} &\vdots \arrow{d}{\partial_2} &\vdots \arrow{d}{\partial_2} &
 \\\cdots  \arrow{r}{\ \partial_1\ } & E^0_{r+1,s+1}
 \arrow{r}{\ \partial_1\ }  \arrow{d}{\partial_2}  & 
E^0_{r,s+1}
 \arrow{r}{\ \partial_1\ }  \arrow{d}{\partial_2}  & E^0_{r-1,s+1}
 \arrow{r}{\ \partial_1\ }  \arrow{d}{\partial_2}  & \cdots
 \\ \cdots  \arrow{r}{\ \partial_1\ } & E^0_{r+1,s}
 \arrow{r}{\ \partial_1\ }  \arrow{d}{\partial_2}  & 
E^0_{r,s}
 \arrow{r}{\ \partial_1\ }  \arrow{d}{\partial_2}  & E^0_{r-1,s}
 \arrow{r}{\ \partial_1\ }  \arrow{d}{\partial_2}  & \cdots
  \\ \cdots  \arrow{r}{\ \partial_1\ } & E^0_{r+1,s-1}
 \arrow{r}{\ \partial_1\ }  \arrow{d}{\partial_2}  & 
E^0_{r,s-1}
 \arrow{r}{\ \partial_1\ }  \arrow{d}{\partial_2}  & E^0_{r-1,s-1}
 \arrow{r}{\ \partial_1\ }  \arrow{d}{\partial_2}  & \cdots
 \\ & \vdots & \vdots & \vdots &
\end{tikzcd}
\]
where 
\[
  E^0_{r,s}= \bigoplus_{\substack{(x,x')\in U\\\dim (x) = r \\ \dim(x') =
      s}} \F(x,x'),
  \quad\qquad C_q(U;\F) = \bigoplus_{r+s=q}E^0_{r,s},
\]
and the restrictions of the differentials on $\F(x,x')$ are given by 
\begin{align} \nonumber
  \partial_1 (\alpha)= \sum_{x \lessdot y  }
  \iota_{(x,x'),(y,x')}(\alpha)
  & & \text{and} & &
  \partial_2(\alpha) = \sum_{x' \lessdot y'} \iota_{(x,x'),(x,y')}(\alpha).
\end{align}
The two canonical filtrations of the bicomplex given by
\begin{align} \nonumber
  F_i = \bigoplus_{\substack{r,s \\ s \le i}} E^0_{r,s} & & \text{and} &&
  F'_i = \bigoplus_{\substack{r,s \\ r \le i}} E^0_{r,s} 
\end{align}
give rise to two spectral sequences, both converging to
$H_\bullet(U;\F)$. We refer to \cite[Section 14]{BotTu82} for 
more details. 
To fix our index notation in the spectral sequences, we describe in
Figures \ref{fig:F1} and \ref{fig:F2} the first pages of these two
spectral sequences.
In particular, a basic application of spectral sequences gives the
following.
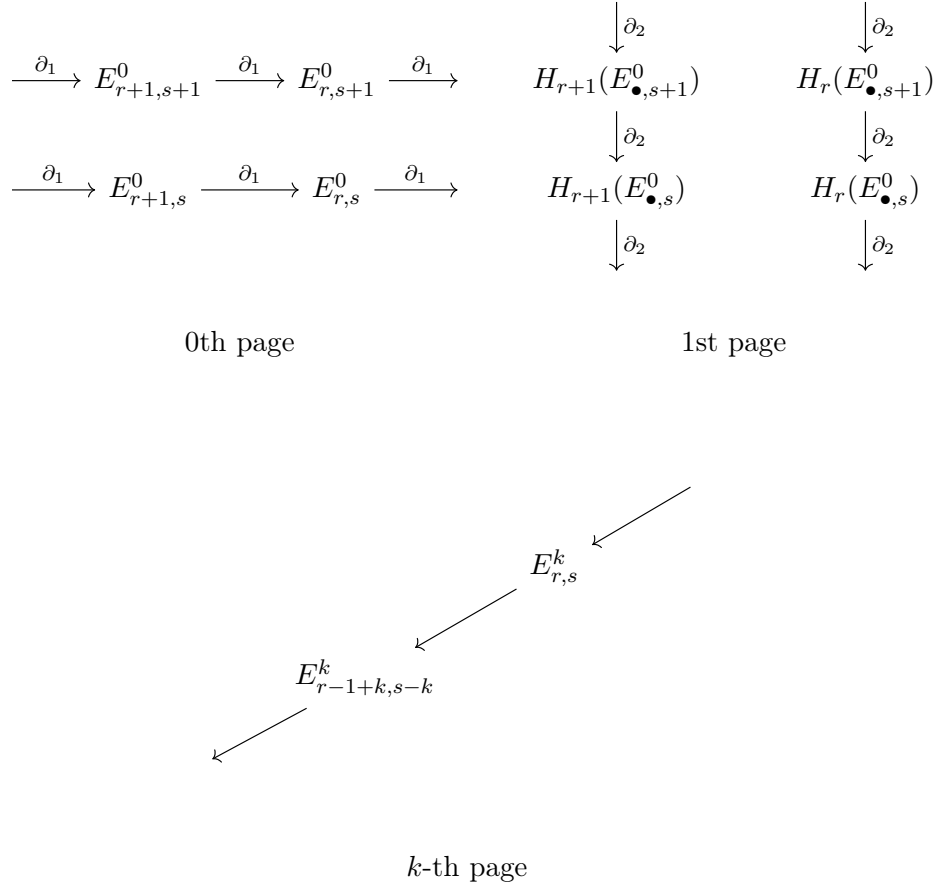
\begin{figure}[!h]
  \[
  \begin{array}{cc}
\begin{tikzcd}
 \\  \arrow{r}{\partial_1}   &  E^0_{r+1,s+1}
 \arrow{r}{\partial_1}   & 
E^0_{r,s+1}\arrow{r}{\partial_1} & \
 \\ \arrow{r}{\partial_1}   &   E^0_{r+1,s}
 \arrow{r}{\partial_1} & 
E^0_{r,s}\arrow{r}{\partial_1} &\
\end{tikzcd}
& 
\begin{tikzcd}
 \\  \arrow{d}{\partial_2} & \arrow{d}{\partial_2}  

 \\  H_{r+1}(E^0_{\bullet,s+1})
 \arrow{d}{\partial_2}   & 
H_r(E^0_{\bullet,s+1}) \arrow{d}{\partial_2}  

 \\ H_{r+1}(E^0_{\bullet,s}) \arrow{d}{\partial_2}  
  & 
 H_r(E^0_{\bullet,s}) \arrow{d}{\partial_2}
 \\ \ &\ 
\end{tikzcd}
\\\\ 0\mbox{th page} &
1\mbox{st page}
\end{array}
\]
\[
  \begin{array}{c}
\begin{tikzcd}
 \\ \ & \ &   & \phantom{E^2_{r-1+k,s-k}} 
 \\ \  & 
  & 
E^k_{r,s}  \arrow[ru,leftarrow]  &  \phantom{E^k_{r,s}} 
 \\&   E^k_{r-1+k,s-k}
 \arrow[ru,leftarrow] & 
  &\phantom{E^2_{r+1,s}} 
 \\ \phantom{E^2_{r+1,s}}  \arrow[ru,leftarrow] &  \phantom{E^2_{r+1,s}} 
\end{tikzcd}
\\\\  
k\mbox{-th page}
  \end{array}
  \]
\caption{The spectral sequence for the filtration $\displaystyle F_i = \bigoplus_{r,s \; : \; s \le i} E^0_{r,s}$  \label{fig:F1}}
  \end{figure}

\begin{figure}[!h]
  \[
  \begin{array}{cc}
 \begin{tikzcd}
 \\  \arrow{d}{\partial_2} & \arrow{d}{\partial_2}  

 \\  E^0_{r+1,s+1}
 \arrow{d}{\partial_2}   & 
E^0_{r,s+1}\arrow{d}{\partial_2}  

 \\ E^0_{r+1,s} \arrow{d}{\partial_2}  
  & 
 E^0_{r,s} \arrow{d}{\partial_2}
 \\ \ &\ 
 \end{tikzcd}
 &
 \begin{tikzcd}
 \\  \arrow{r}{\partial_1}   &  H_{s+1}(E^0_{r+1,\bullet})
 \arrow{r}{\partial_1}   & 
H_{s+1}(E^0_{r,\bullet})\arrow{r}{\partial_1} & \
 \\ \arrow{r}{\partial_1}   &  H_{s}(E^0_{r+1,\bullet})
 \arrow{r}{\partial_1} & 
H_{s}(E^0_{r,\bullet})\arrow{r}{\partial_1} &\
\end{tikzcd}
 \\\\ 0\mbox{th page} & 
 1\mbox{st page}
  \end{array}
  \]
  \[
  \begin{array}{c}
  \begin{tikzcd}
 \\ \ & \ &   & \phantom{E^2_{r-1+}} 
 \\ \  & 
  & 
E^k_{r-k,s-1+k}  \arrow[ru]  &  \phantom{E^k_{r,s}} 
 \\&   E^k_{r,s}
 \arrow[ru] & 
  &\phantom{E^2_{r+1,s}} 
 \\ \phantom{E^2_{r-1+k,s-k}}  \arrow[ru] &  \phantom{E^2_{r+1,s}} 
\end{tikzcd}
  \\\\  
  k\mbox{-th page}
  \end{array}
  \]
\caption{The spectral sequence for the filtration $\displaystyle F'_i = \bigoplus_{r,s \; : \; r \le i} E^0_{r,s}$  \label{fig:F2}}
  \end{figure}

\begin{lemma}\label{lem:p1}
  Suppose that $H_r(E^0_{\bullet,s})=0$ for all $r\ne r_0$. Then
  \[
  \forall q\in \Z,\quad H_q(U;\F)=\frac{
    \textrm{Ker}\left(\partial_2 :
    H_{r_0}(E^0_{\bullet,q-r_0})\to H_{r_0}(E^0_{\bullet,q-r_0-1})
    \right)}
          {\textrm{Im}\left(\partial_2 :
    H_{r_0}(E^0_{\bullet,q-r_0+1})\to H_{r_0}(E^0_{\bullet,q-r_0})
    \right)}.
          \]

          Analogously if $H_s(E^0_{r,\bullet})=0$ for all
          $s\ne s_0$, then
\[
  \forall q\in \Z,\quad H_q(U;\F)=\frac{
    \textrm{Ker}\left(\partial_1 :
    H_{s_0}(E^0_{q-s_0,\bullet})\to H_{s_0}(E^0_{q-s_0-1,\bullet})
    \right)}
          {\textrm{Im}\left(\partial_1 :
    H_{s_0}(E^0_{q-s_0+1,\bullet})\to H_{s_0}(E^0_{q-s_0,\bullet})
    \right)}.
          \]
\end{lemma}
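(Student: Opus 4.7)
The plan is to apply the first of the two spectral sequences described just before the lemma, namely the one associated to the filtration $F_i = \bigoplus_{s\le i} E^0_{r,s}$ depicted in Figure~\ref{fig:F1}. Its $E^1$-page is $E^1_{r,s} = H_r(E^0_{\bullet,s})$ equipped with the differential induced by $\partial_2$. By the hypothesis $H_r(E^0_{\bullet,s}) = 0$ for $r \neq r_0$, the $E^1$-page is concentrated on the single row $r = r_0$.

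Passing to the $E^2$-page, the entries in row $r_0$ are precisely
\[
E^2_{r_0, s} = \frac{\ker\bigl(\partial_2 : H_{r_0}(E^0_{\bullet, s}) \to H_{r_0}(E^0_{\bullet, s-1})\bigr)}{\operatorname{Im}\bigl(\partial_2 : H_{r_0}(E^0_{\bullet, s+1}) \to H_{r_0}(E^0_{\bullet, s})\bigr)},
\]
while all entries off row $r_0$ remain zero. Next I would observe that the spectral sequence degenerates at $E^2$: the differential $d_k$ on the $E^k$-page has bidegree $(k-1, -k)$, so for $k \ge 2$ any nonzero arrow issuing from row $r_0$ lands in row $r_0 + k - 1 \neq r_0$, where the target is zero, and symmetrically any arrow hitting row $r_0$ issues from a zero group. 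Hence $d_k = 0$ for $k \ge 2$ on every nontrivial entry, and $E^\infty = E^2$.

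Since the poset $U$ (and thus the bicomplex) is finite, the spectral sequence converges, and the induced filtration on $H_q(U;\F)$ has at most one nontrivial associated graded piece, namely $E^\infty_{r_0, q-r_0}$. This yields the first displayed formula. The second statement is established by exactly the same argument applied to the other spectral sequence, coming from the filtration $F'_i$ of Figure~\ref{fig:F2}, after exchanging the roles of $(r,\partial_1)$ and $(s,\partial_2)$. I do not foresee any genuine obstacle: this is a textbook degeneration of the spectral sequence of a filtered complex in which a single row of $E^1$ is nonzero, and the only point requiring a bit of care is keeping track of the bidegrees of the higher differentials to confirm they must vanish.
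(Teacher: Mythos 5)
Your argument is correct and is precisely the ``basic application of spectral sequences'' that the paper invokes without proof: degeneration at the second page of the spectral sequence of Figure~\ref{fig:F1} (resp.~\ref{fig:F2}) because the first page is concentrated in a single row, together with convergence guaranteed by finiteness of the poset. The bidegree bookkeeping for the higher differentials and the observation that the limit filtration on $H_q(U;\F)$ has a single nontrivial graded piece are exactly the points one needs, so nothing is missing.
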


\subsection{The cosheaves $\F_p^k$}
We now turn to the special cases of poset homology that are of interest to us. 
Recall that $\Delta$ is a non-singular 
lattice polytope in
$\R^n$ and $\Gamma$ is a primitive triangulation of $\Delta$.
We denote by $\Fac(\Delta)$ the face lattice of $\Delta$, graded by
the dimension. 
The triangulation $\Gamma$ is a poset with respect to inclusion and
also graded by
the dimension. 
We define the following open set of the poset
$ \Fac(\Delta) \times \Gamma^{\text{op}}$:
\[
  \Xi = \{ (F, \sigma) : \sigma \subset F\} \quad \subset \Fac(\Delta) \times \Gamma^{\text{op}}.
\]
We emphasize that we use the inverted order in the second factor $\Gamma^{\text{op}}$, that is, 
\[
(G,\tau)\leq (F,\sigma) \Longleftrightarrow 
 \sigma \subseteq \tau \subseteq G\subseteq F.
\]

\begin{remark}[For the attention of tropical experts] \label{rem:DualSubdivision}
  When  convex, the triangulation $\Gamma$ is dual to a 
  (generalized)  polyhedral subdivision $\SSS$ of
  the tropical toric variety $\mbox{Tor}_\TT(\Delta)$.
As usual $\mbox{Tor}_\TT(\Delta)$ is a disjoint union of (tropical)
toric orbits $B_F$ which are in one-to-one correspondence with  faces
$F$ of
$\Delta$.
A cell $\sigma\in \Gamma$ has a dual cell $\sigma_F^\vee\in \SSS$
contained in $B_F$ for each face $F$
of $\Delta$ containing $\sigma$. In particular the correspondence
$(F,\sigma)\in \Xi \mapsto \sigma_F^\vee\in \SSS$ establishes
an isomorphism of posets between $\Xi$ and the face poset of $\SSS$.
\end{remark}

The poset $\Xi$ is graded by
$\dim(F, \sigma) = \dim(F) -\dim(\sigma)$, which
takes  values in $\Z_{\ge 0}$.
It is well-known that $\Fac(\Delta)$ and $\Gamma$ are thin. 
Hence
$\Xi$ is thin as well.

Recall that we define cosheaves $\F_p^k$ on $\Xi$ for all $p,k = 0, \dots, n$ given by
  \[
  \F_p^k(F,\sigma)=\sum_{\substack{\tau\subset \sigma\\\dim \tau=k}}
  \bw^p \left(\tau^\perp/F^\perp \right).
  \]
  In particular $\F_p^k(F,\sigma)=0$ if $p>\dim F -k$ or $\dim \sigma<k$.
Note that $\F_p^0(F,\sigma)=\bw^p \left((\FF^n)^\vee/F^\perp \right)$
only depends on $F$, and that
for all $p$
\[
  \F_p^n(F,\sigma)\subset \F_p^{n-1}(F,\sigma)\subset \dots \subset \F_p^{1}(F,\sigma) \subset \F_p^0(F,\sigma).
\]
The next lemma gives more detailed information about this filtration. 

\begin{lemma}\label{lem:dim Fp}
  Given $1\le k\le n$, and $0\le p\le n-k$, one has
  \[
\dim \F_p^k(F,\sigma)=
     {{\dim F}\choose{p}}-\sum_{l=0}^{k-1}{{\dim
         \sigma}\choose{l}}{{\dim F-\dim\sigma}\choose{p-\dim\sigma+l}}
     \]
In particular if $p\le \dim \sigma-k$, then
  \[
  \F_p^k(F,\sigma)= \F_p^0(F,\sigma).
  \]
\end{lemma}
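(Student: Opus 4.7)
Let $d = \dim \sigma$, $m = \dim F$, $e = m - d$, and $V = (\FF^n)^\vee / F^\perp$. The plan is to find a common subspace of $V$ sitting inside every $L_\tau := \tau^\perp / F^\perp$, split it off as a tensor factor, and reduce the calculation to a combinatorial statement inside $T_{\FF}(\sigma)^\vee$. Since $T_{\FF}(\tau) \subset T_{\FF}(\sigma)$ for every face $\tau \subset \sigma$, we have $\sigma^\perp \subset \tau^\perp$, so $L_\tau$ contains $\tilde U := \sigma^\perp / F^\perp$, a subspace of dimension $e$. Choose any complement $\tilde S$ of $\tilde U$ in $V$ and set $N_\tau := L_\tau \cap \tilde S$, so that $L_\tau = N_\tau \oplus \tilde U$ with $\dim N_\tau = d - k$. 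This induces
\[
  \bw^p L_\tau = \bigoplus_{a + b = p} \bw^a N_\tau \otimes \bw^b \tilde U
\]
inside the fixed splitting $\bw^p V = \bigoplus_{a+b=p} \bw^a \tilde S \otimes \bw^b \tilde U$, and therefore
\[
  \F_p^k(F, \sigma) = \bigoplus_{a+b=p} \Bigl( \sum_{\tau} \bw^a N_\tau \Bigr) \otimes \bw^b \tilde U.
\]

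The combinatorial heart of the proof is the claim that $\sum_\tau \bw^a N_\tau = \bw^a \tilde S$ whenever $a \leq d - k$, and $\sum_\tau \bw^a N_\tau = 0$ otherwise. The second case follows from $\dim N_\tau = d - k < a$. For the first, unimodularity of $\Gamma$ enters decisively: pick vertices $v_0, \dots, v_d$ of $\sigma$ and set $f_i := v_i - v_0$, so that $f_1, \dots, f_d$ form a basis of $T_{\FF}(\sigma)$; take $\tilde S$ to be the span of the dual basis $f_1^\vee, \dots, f_d^\vee \in V$. For a $k$-face $\tau$ with vertex set $\{v_0, v_{i_1}, \dots, v_{i_k}\}$, one has $T_{\FF}(\tau) = \langle f_{i_1}, \dots, f_{i_k} \rangle$, and hence $N_\tau$ is the coordinate $(d-k)$-plane $\langle f_j^\vee : j \notin \{i_1, \dots, i_k\} \rangle$. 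Any basis monomial $f_{j_1}^\vee \wedge \cdots \wedge f_{j_a}^\vee$ of $\bw^a \tilde S$ with $a \leq d - k$ lies in $\bw^a N_\tau$ for some such $\tau$: choose $\{i_1, \dots, i_k\}$ to be any $k$-subset of $\{1, \dots, d\} \setminus \{j_1, \dots, j_a\}$, which is possible precisely because $a \leq d - k$.

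Combining the two steps,
\[
  \dim \F_p^k(F, \sigma) = \sum_{a = 0}^{\min(d-k,\, p)} \binom{d}{a}\binom{e}{p - a}.
\]
Comparing with the Vandermonde identity $\binom{m}{p} = \sum_{a=0}^{p} \binom{d}{a}\binom{e}{p - a}$ and re-indexing by $l = d - a$ yields the stated formula. The final assertion now drops out for free: when $p \leq d - k$, the sum above runs all the way up to $p$ and so equals $\binom{m}{p} = \dim \F_p^0(F,\sigma)$, and the evident inclusion $\F_p^k(F, \sigma) \subset \F_p^0(F, \sigma)$ forces equality. The main obstacle is the combinatorial claim of the second paragraph; it is precisely at this point that unimodularity cannot be dispensed with, since it is what produces a basis with respect to which the tangent spaces of $k$-faces through $v_0$ become coordinate subspaces.
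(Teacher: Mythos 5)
Your proof is correct, and at its core it rests on the same mechanism as the paper's: use unimodularity to choose a basis of $T_\Z(F)$ adapted to $\sigma$, so that the spaces $\tau^\perp/F^\perp$ for the $k$-faces $\tau$ through $v_0$ become coordinate subspaces, and then decide which wedge monomials appear in the sum. The packaging differs in a way worth noting. You split off $\tilde U=\sigma^\perp/F^\perp$ as a common tensor factor and grade by $a=p-b$, reducing everything to the all-or-nothing statement $\sum_\tau\bw^a N_\tau=\bw^a\tilde S$ for $a\le\dim\sigma-k$ and $0$ otherwise; the dimension then comes out as a truncated Vandermonde sum. The paper instead works directly with the monomial basis of $\sum_I\bw^p(\tau_I^\perp/F^\perp)$ and counts the complement (the $p$-subsets meeting every $k$-subset of $\{1,\dots,\dim\sigma\}$), which is the same combinatorics indexed by $l=\dim\sigma-a$. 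Your decomposition buys one genuine simplification: since every $N_\tau$ lies in $\tilde S$ and has dimension $d-k$ regardless of whether $\tau$ contains $v_0$, the faces not containing $v_0$ are absorbed automatically, whereas the paper needs a separate change-of-basis argument at the end to show that such faces contribute nothing new. One small point of hygiene: "the dual basis $f_1^\vee,\dots,f_d^\vee\in V$" implicitly requires extending $f_1,\dots,f_d$ to a basis of $T_\Z(F)$, which is exactly what primitivity of $\sigma$ (and non-singularity of $\Delta$) guarantees; you should say so, as the paper does when it sets up its basis $(e_1,\dots,e_n)$.
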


\begin{proof}
  Let $m$ be the dimension of the face $F$.
  Since $\sigma$ is a primitive simplex, there exists a basis
 $(e_1,\cdots,e_n)$ of
 $\Z^n$ such that $e_1,\cdots,e_{\dim\sigma}$ are
  the edges of $\sigma$ adjacent to one of its  vertices $v_0$, and
  $(e_1,\cdots,e_{m})$ is a basis of $T_\Z(F)$.
 We denote by $(e^\vee_1,\cdots,e^\vee_n)$ the basis of $(\Z^n)^\vee$ dual
 to $(e_1,\cdots,e_n)$.
We identify the quotient space $(\Z^n)^\vee/F^\perp$ with the linear
space generated by $(e_1^\vee,\cdots,e_m^\vee)$.
Given two positive integers $a,b$, we denote by
$\mathcal P(a,b)$ the set of subsets of $\{1,\cdots,a\}$ of cardinal $b$.

There is a bijection between faces of $\sigma$
of dimension $k$  containing $v_0$, and 
 elements of $\mathcal P(\dim\sigma,k)$.
The face of $\sigma$ corresponding to such subset $I$ is denoted by
$\tau_I$.
 A basis of $\tau_I^\perp/F^\perp$ is then
 $(e^\vee_i)_{i\in\{1,\cdots,m\}\setminus I}$, and a basis of
 $\bw^p \tau_I^\perp/F^\perp$ is given by
 \[
 (e^\vee_{i_1}\wedge \cdots\wedge
  e^\vee_{i_p})_{\{i_1,\cdots i_p\}\subset \{1,\cdots,m\}\setminus I}.
  \]
  Defining
  \[
  \mathfrak J=\big\{J\in \mathcal P(m,p) |\
  \exists I\in\mathcal P(\dim\sigma,k), \ J\cap I =\emptyset 
  \big\},
  \]
  we obtain that 
 the vector space
   \[
 \sum_{I\in\mathcal P(\dim\sigma,k)}
  \bw^p \left(\tau_I^\perp/F^\perp \right)
  \]
  admits
   \[
 (e^\vee_{i_1}\wedge \cdots\wedge
  e^\vee_{i_p})_{\{i_1,\cdots i_p\}\in\mathfrak J}
  \]
  as a basis.
  Hence it has dimension
  \begin{align*}
    |\mathfrak J|&= {m\choose p}-
    \left|\big\{J\in \mathcal P(m,p) |\
  \forall I\in\mathcal P(\dim\sigma,k), \ J\cap I \ne \emptyset 
  \big\}  \right|
  \\ &= {m\choose p}- \sum_{l=0}^{k-1}\big|\mathcal
  P(\dim\sigma,\dim\sigma-l)\big|
  \times  \big|\mathcal P(\dim F-\dim\sigma,p-\dim\sigma+l)\big|
  \\ &=  {{\dim F}\choose{p}}-\sum_{l=0}^{k-1}{{\dim
         \sigma}\choose{l}}{{\dim F-\dim\sigma}\choose{p-\dim\sigma+l}}.
  \end{align*}

  \medskip
  To end the proof of the lemma, it remains to observe that
     \[
 \F_p^k(F,\sigma)=\sum_{I\in\mathcal P(\dim\sigma,k)}
  \bw^p \left(\tau_I^\perp/F^\perp \right).
  \]
Indeed, let $\tau$ be a face of $\sigma$ of dimension $k$ which does
not contain the vertex $v_0$. Without loss of generality, we may
assume that $(e_1+e_2,\cdots, e_1+e_{k+1})$ is a basis of 
$T_{\FF}(\tau)$, and so that
\[
(e_1^\vee+e_2^\vee+\cdots +e_{k+1}^\vee, e_{k+1}^\vee,\cdots
e_{m}^\vee)
\]
is a basis of $\tau^\perp/F^\perp$.
In particular we see that
    \[
 \bw^p  \left(\tau^\perp/F^\perp \right)\subset \sum_{I\in\mathcal P(\dim\sigma,k)}
  \bw^p \left(\tau_I^\perp/F^\perp \right),
  \]
  which ends the proof.
\end{proof}

The definition of the cosheaves $\F^k_p$ is motivated by their relation
to the homology of linear subspaces of complex tori. Even if we will
not strictly speaking use the following proposition in the text, it
is important to keep it in mind. 
\begin{prop}[\cite{Zha13} and \cite{OT-ArrangementsHyperplanes}]\label{prop:OS}
	For any $(F,\sigma)\in\Xi$, and any integer $k\ge 0$,
  there exists an isomorphism  of $\FF$-vector spaces
  \[
  \F^k_p(F,\sigma)\stackrel{\sim}{\longrightarrow}H_p(M_{\dim
    \sigma,k}\times(\C^*)^{\dim F-\dim\sigma};\FF),
  \]
  where  $M_{n,k}$ is a generic
linear space of codimension $k$ in $(\C^*)^n$.
 \end{prop}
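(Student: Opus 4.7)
The plan is to combine the Künneth formula with the Orlik--Solomon description of the homology of a generic affine linear section of a complex torus, which is the input provided by the cited works. Setting $d = \dim \sigma$ and $m = \dim F$, the Künneth formula splits the right-hand side as
\[
H_p(M_{d,k} \times (\C^*)^{m-d}; \FF) \cong \bigoplus_{i+j=p} H_i(M_{d,k}; \FF) \otimes H_j((\C^*)^{m-d}; \FF),
\]
where $H_j((\C^*)^{m-d}; \FF)$ is canonically $\bw^j$ of a rank-$(m-d)$ $\FF$-space.

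The next step would be to identify $H_i(M_{d,k}; \FF)$ via Orlik--Solomon. The space $M_{d,k}$ is the complement in a generic codimension-$k$ affine subspace $L \cong \C^{d-k}$ of the $d$ restricted coordinate hyperplanes. For this generic affine arrangement, the Orlik--Solomon algebra computes $H^*(M_{d,k}; \FF)$, and in degree $i$ it has basis given by the classes of the $d\log$-products $d\log z_{i_1} \wedge \cdots \wedge d\log z_{i_i}$ indexed by subsets $I = \{i_1, \ldots, i_i\} \subset \{1, \ldots, d\}$ with $|I| = i \leq d-k$, and vanishes for $i > d-k$.

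Finally, I would match both sides of the proposition by choosing the basis $(e_1, \ldots, e_n)$ of $\Z^n$ adapted to $\sigma \subset F$ as in the proof of Lemma \ref{lem:dim Fp}: edges $e_1, \ldots, e_d$ of $\sigma$ at a fixed vertex, and $e_1, \ldots, e_m$ spanning $T_\Z(F)$. That proof exhibits a basis of $\F_p^k(F,\sigma)$ consisting of the wedges $e_{j_1}^\vee \wedge \cdots \wedge e_{j_p}^\vee$ for $J \subset \{1, \ldots, m\}$ of size $p$ satisfying $|J \cap \{1, \ldots, d\}| \leq d - k$. The Künneth decomposition produces exactly the same basis on the right-hand side, via partitions $J = I \sqcup L$ with $I \subset \{1, \ldots, d\}$ of size $i \leq d-k$ (contributing to $H_{i}(M_{d,k})$) and $L \subset \{d+1, \ldots, m\}$ of size $j = p-i$ (contributing to $H_{j}((\C^*)^{m-d})$).

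The main obstacle is the Orlik--Solomon identification of $H_*(M_{d,k}; \FF)$ itself, which is the content of the cited results of Orlik--Terao and Zharkov; once that classical input is accepted, the rest reduces to combinatorial bookkeeping in the explicit basis of Lemma \ref{lem:dim Fp}.
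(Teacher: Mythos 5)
Your argument is correct, but note that the paper offers no proof of this proposition at all: it is stated as a known result imported from the cited references (Zharkov, Orlik--Terao), and the surrounding text even says it will not strictly be used. So what you have done is reconstruct the missing argument, and the reconstruction is sound. Two small observations. First, since the statement only asserts the \emph{existence} of an isomorphism of $\FF$-vector spaces, everything reduces to a dimension count; your Künneth-plus-Orlik--Solomon computation gives $\dim H_p(M_{d,k}\times(\C^*)^{m-d};\FF)=\sum_{i\le d-k}\binom{d}{i}\binom{m-d}{p-i}$ (with $d=\dim\sigma$, $m=\dim F$), and a Vandermonde-type rewriting shows this equals the formula of Lemma \ref{lem:dim Fp}, so the explicit basis matching, while a nice bonus that makes the isomorphism concrete, is not strictly needed. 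Second, your direction of reasoning is the reverse of the one the paper suggests: the remark following the proposition points out that Proposition \ref{prop:OS} together with the Lefschetz section theorem yields an alternative proof of Lemma \ref{lem:dim Fp}, whereas you use the combinatorial proof of Lemma \ref{lem:dim Fp} as the anchor and deduce the proposition from it. The only genuinely external input in your proof is the Betti number computation $b_i(M_{d,k})=\binom{d}{i}$ for $i\le d-k$ and $0$ above (the Hattori/Orlik--Solomon description of a general-position affine arrangement of $d$ hyperplanes in $\C^{d-k}$), which is exactly the content of the cited works; do make sure you read \enquote{generic linear space} as a generic \emph{affine} slice, since a central generic arrangement would give different top Betti numbers and the count would fail.
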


\begin{rem}
  Proposition \ref{prop:OS} combined with  the Lefschetz hyperplane
  section theorem  provides an alternative geometric proof of
  Lemma \ref{lem:dim Fp}.
\end{rem}

\subsection{Homology of non-singular toric varieties}
Here we relate numbers $h_q(\Gamma;\F_p^0)$ to Hodge numbers of the
toric variety $\mbox{Tor}_\C(\Delta)$
The next paragraph recasts in
convenient (for us) notations known
results about computations of homology and Chow groups of toric varieties.

Let $\Sigma\subset \R^n$ be a unimodular fan defining a non-singular toric
variety $\mbox{Tor}_\C(\Sigma)$. We can construct a differential complex
\[
\begin{array}{cccccc}
 &  E_{n,p} & & E_{n-1,p}&& E_{p,p}
 \\ &  \rotatebox[origin=c]{90}{=} & &   \rotatebox[origin=c]{90}{=}&&   \rotatebox[origin=c]{90}{=}
 \\
  0 \to &\bw^p \FF^n &\to
&\displaystyle \bigoplus_{\substack{\sigma \in \Sigma\\ \dim \sigma=1}} \bw^p \left(\FF^n/T_{\FF}(\sigma) \right) &
\to   \cdots \to &
\displaystyle  \bigoplus_{\substack{\sigma \in \Sigma\\ \dim \sigma=n-p}}  \bw^p \left(\FF^n/T_{\FF}(\sigma)
\right)\to 0
\end{array}
\]
where the differential maps a vector
$v\in \bw^p \left(\FF^n/T_{\FF}(\tau)  \right)$ to the sum of the
natural projections of $v$ to
$\bw^p \left(\FF^n/T_{\FF}(\sigma)\right)$ for all cones $\sigma$ of $\Sigma$
that contain $\tau$ as a facet, see e.g.\ \cite[Chapter 2]{Jor-HomologyCohomologyToric}.
We denote by $H_{r,p}(\Sigma)$ the $r$-th homology group of this
complex.

\begin{lemma}\label{lem:hom tor}
The Borel-Moore homology group $H^{BM}_i(\mbox{Tor}_\C(\Sigma);\FF)$ splits as a 
(non-canonical) direct sum
\[
H^{BM}_i(\mbox{Tor}_\C(\Sigma);\FF)=\bigoplus_{r+p=i}H_{r,p}(\Sigma).
\]
Furthermore, the group $H_{p,p}(\Sigma)$ is isomorphic to
$A_p(\mbox{Tor}_\C(\Sigma))\otimes \FF$, where $A_p(\mbox{Tor}_\C(\Sigma))$ is the $p$-th Chow group  
of $\mbox{Tor}_\C(\Sigma)$.
\end{lemma}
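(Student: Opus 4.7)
The plan is to exploit the torus-orbit stratification of $\mbox{Tor}_\C(\Sigma)$. To each cone $\sigma\in\Sigma$ corresponds an orbit $O_\sigma \cong (\C^*)^{n-\dim\sigma}$, and these form a locally closed stratification whose closure order is governed by inclusion of cones. Filtering $\mbox{Tor}_\C(\Sigma)$ by closed unions of low-dimensional orbit closures yields the standard spectral sequence of Borel-Moore homology converging to $H^{BM}_\bullet(\mbox{Tor}_\C(\Sigma);\FF)$. The Borel-Moore homology of a stratum is computed by K\"unneth from $H^{BM}_\bullet(\C^*;\FF) = \FF$ in degrees $1$ and $2$, yielding, after the self-duality of exterior algebras over $\FF$, an isomorphism
\[
H^{BM}_{(n-\dim\sigma)+p}(O_\sigma;\FF) \;\cong\; \bw^p(\FF^n/T_\FF(\sigma)),
\]
vanishing outside the range $0 \le p \le n-\dim\sigma$.

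Under this identification, the $E^1$-page of the spectral sequence, read along the anti-diagonal $r+p=i$, matches precisely the complex defining $H_{r,p}(\Sigma)$, provided one checks that the first differential agrees with the combinatorial boundary map. This differential is the Gysin contribution of each cover relation $\tau\lessdot\sigma$, coming from the closed inclusion $\overline{O_\sigma} \hookrightarrow \overline{O_\tau}$, and under the identification above it is the natural projection $\bw^p(\FF^n/T_\FF(\tau)) \to \bw^p(\FF^n/T_\FF(\sigma))$ used in defining the complex.

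The remaining step is to show that the spectral sequence degenerates at $E^2$. This follows from the concentration of $H^{BM}_j(O_\sigma;\FF)$ in the range $n-\dim\sigma \le j \le 2(n-\dim\sigma)$: the higher differentials $d_r$ for $r\ge 2$ are forced to land outside this range and thus vanish. Since $\FF$-vector spaces admit splittings of all short exact sequences, the resulting filtration on $H^{BM}_i(\mbox{Tor}_\C(\Sigma);\FF)$ splits non-canonically, yielding the displayed direct sum decomposition. For the second assertion, $H_{p,p}(\Sigma)$ sits at the right-end of the complex and is presented by the free $\FF$-module on classes $[\overline{O_\sigma}]$ with $\dim\sigma = n-p$ modulo the image of the preceding differential. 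This is exactly the classical presentation of $A_p(\mbox{Tor}_\C(\Sigma))\otimes\FF$ for non-singular toric varieties; see \cite{FMSS-IntersectionTheorySpherical, Dan78, Jor-HomologyCohomologyToric}.

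The main obstacle I foresee is the careful matching of the spectral sequence boundary with the combinatorial differential on the nose (the correct identification of Gysin maps in terms of exterior algebra projections). Once this is in place, the dimensional argument for degeneration and the Chow-group identification are standard; indeed, the entire statement can alternatively be extracted from Jordan's systematic treatment \cite{Jor-HomologyCohomologyToric}, which carries out precisely this analysis.
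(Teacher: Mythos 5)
Your route is the same as the paper's: filter $\mbox{Tor}_\C(\Sigma)$ by unions of orbit closures, identify the $E^1$-page of the resulting Borel--Moore spectral sequence with the combinatorial complex $E_{\bullet,\bullet}$, prove degeneration at $E^2$, and recognize $H_{p,p}(\Sigma)$ as the standard presentation of $A_p(\mbox{Tor}_\C(\Sigma))\otimes\FF$. The paper outsources the identification of $E^1$ and the degeneration to \cite[Theorem 2.4.1, Proposition 2.4.5]{Jor-HomologyCohomologyToric}, and the Chow-group step to \cite{FMSS-IntersectionTheorySpherical} and \cite{FulStu97}.

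There is, however, a genuine gap in your argument for degeneration at $E^2$, which is the one non-formal step. Writing $E^1_{d,q}=\bigoplus_{\dim\sigma=n-d}H^{BM}_{d+q}(O_\sigma;\FF)$, the support condition $n-\dim\sigma\le j\le 2(n-\dim\sigma)$ you invoke translates into the region $0\le q\le d$, and the higher differential $d_r\colon E^r_{d,q}\to E^r_{d-r,\,q+r-1}$ has its target \emph{inside} that region whenever $q\le d-2r+1$. For example $d_2\colon E^2_{3,0}\to E^2_{1,1}$, i.e.\ a map $H_{3,0}(\Sigma)\to H_{1,1}(\Sigma)$, connects two positions that are a priori both nonzero; for a non-complete unimodular fan the $E^2$-page is not concentrated on the diagonal $d=q$ (and the lemma is indeed applied to non-complete fans, e.g.\ via Corollary \ref{cor:hom aff} inside Lemma \ref{lem:sigmacomplex}). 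So the vanishing of $d_r$ for $r\ge 2$ is not forced by supports: it is a theorem, proved over $\Q$ by a weight argument (each $E^1_{d,q}$ is pure of weight $-2q$, which $d_r$ would shift) and over $\Z$, hence over $\FF$, by the analysis of \cite{Jor-HomologyCohomologyToric} that the paper cites. Your closing remark that the whole statement can be extracted from Jordan's treatment is the correct fallback, but as written your dimension count does not substitute for it. The rest of the proposal (Künneth computation of the strata, matching of the Gysin differential with the projections, and the presentation of $A_p\otimes\FF$) is consistent with the paper's references and poses no further problem.
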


\begin{proof}
  The first part of the statement is contained in 
	\cite[Theorem 2.4.1, Proposition 2.4.5]{Jor-HomologyCohomologyToric}.
	Indeed, the complex $E_{\bullet,\bullet}$ is the first page of the
  spectral sequence for $H^{BM}_\bullet(\mbox{Tor}_\C(\Sigma);\FF)$ induced by the
	stratification of $\mbox{Tor}_\C(\Sigma)$ into torus orbits, and this
	spectral sequence degenerates at the second page.
An  isomorphism between $H_{p,p}(\Sigma)$ and
$A_p(\mbox{Tor}_\C(\Sigma))\otimes \FF$ is given by
\cite[Theorem 1]{FMSS-IntersectionTheorySpherical},
\cite[Proposition 1.1]{FulStu97} and 
the universal coefficient theorem.
\end{proof}

\begin{cor}\label{cor:hom aff}
  If $\Sigma$ is the cone generated by a basis of $\Z^n$, then
  \[
  H_{n,n}(\Sigma)=\FF\qquad\mbox{and}\qquad
  H_{r,p}(\Sigma)=0\ \mbox{if }(r,p)\ne (n,n).
  \]
\end{cor}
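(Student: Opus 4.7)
The plan is to derive this directly from Lemma \ref{lem:hom tor} applied to the toric variety $\mbox{Tor}_\C(\Sigma) = \C^n$. Since $\Sigma$ is the fan consisting of a single maximal cone generated by a basis of $\Z^n$ together with all its faces, the associated toric variety is affine $n$-space. Its Borel-Moore homology with $\FF$ coefficients is well known:
\[
H^{BM}_i(\C^n; \FF) = \begin{cases} \FF & \text{if } i = 2n, \\ 0 & \text{otherwise.} \end{cases}
\]

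First, I would observe the a priori range constraints on the indices in the complex defining $H_{r,p}(\Sigma)$. The term $E_{r,p}$ appears only for $p \leq r \leq n$ (since there must exist a cone $\sigma \in \Sigma$ of dimension $n-r$ with $\bw^p(\FF^n/T_\FF(\sigma)) \neq 0$, which requires $n-r \leq n-p$, i.e., $r \geq p$; and the leftmost term sits at $r = n$). In particular $r \leq n$ and $p \leq n$ for any non-zero $H_{r,p}(\Sigma)$.

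Next, I would apply Lemma \ref{lem:hom tor} which gives the decomposition
\[
H^{BM}_i(\C^n; \FF) = \bigoplus_{r+p=i} H_{r,p}(\Sigma).
\]
Combined with the vanishing of $H^{BM}_i(\C^n; \FF)$ for $i \neq 2n$, this forces $H_{r,p}(\Sigma) = 0$ whenever $r + p \neq 2n$. Since $r, p \leq n$ by the previous step, the only remaining case is $r = p = n$, and $i = 2n$ yields $H_{n,n}(\Sigma) = \FF$. (Alternatively, for $p = n$ the complex collapses to the single term $\bw^n \FF^n = \FF$ in position $r = n$, giving $H_{n,n}(\Sigma) = \FF$ tautologically; or one can invoke the second half of Lemma \ref{lem:hom tor} together with $A_n(\C^n) = \Z$.)

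There is no real obstacle here, as this is essentially a direct consequence of Lemma \ref{lem:hom tor} and the triviality of $H^{BM}_\bullet(\C^n; \FF)$ outside top degree. The only point requiring mild care is verifying the range $p \le r \le n$ for the indices appearing in the complex, which ensures that the single contribution $(r,p) = (n,n)$ exhausts all non-vanishing homology.
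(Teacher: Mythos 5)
Your proof is correct and follows essentially the same route as the paper: identify $\mbox{Tor}_\C(\Sigma)=\C^n$, use the vanishing of $H^{BM}_i(\C^n;\FF)$ outside degree $2n$, and combine with the index range of $E_{r,p}$ and Lemma \ref{lem:hom tor}. Your additional remark that $p\le r$ is a slightly sharper bookkeeping than the paper's $r,p\in\{0,\dots,n\}$, but both suffice to isolate $(r,p)=(n,n)$.
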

\begin{proof}
  In this case $\mbox{Tor}_\C(\Sigma)=\C^n$, and one computes easily
  that
    \[
  H^{BM}_{2n}(\C^n)=\FF\qquad\mbox{and}\qquad
  H^{BM}_{i}(\C^n)=0\ \mbox{if }i\ne 2n.
  \]
  Since $E_{r,p}=0$ if $r$ or $p$ is not in $\{0,\cdots,n\}$,
  the  result follows from Lemma \ref{lem:hom tor}.
\end{proof}

\begin{cor}\label{lem:eq tor}
  For all integers $p,q$ one has
  \[
  h_q(\Gamma;\F_p^0)=h^{p,q}(\mbox{Tor}_\C(\Delta)).
  \]
\end{cor}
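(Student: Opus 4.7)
The plan is to apply the spectral sequence framework from Section \ref{sec:hom} to the bicomplex for $\Xi \subset \Fac(\Delta) \times \Gamma^{\text{op}}$, using the filtration that keeps $r = \dim F$ fixed, and then to invoke Lemma \ref{lem:hom tor}. The crucial observation is that $\F_p^0(F, \sigma) = \bigwedge^p((\FF^n)^\vee / F^\perp)$ depends only on $F$. For fixed $F$ of dimension $r$, the column complex $(E^0_{r, \bullet}, \partial_2)$ becomes, after reindexing by $t = -s = \dim \sigma$, precisely the simplicial cochain complex of the triangulation $\Gamma|_F$ with constant coefficients $V_F := \bigwedge^p((\FF^n)^\vee / F^\perp)$. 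Since $F$ is homeomorphic to a closed ball, this cochain complex has cohomology $V_F$ concentrated in degree $t = 0$. The second part of Lemma \ref{lem:p1} with $s_0 = 0$ therefore reduces $H_q(\Xi; \F_p^0)$ to the $q$-th homology of the horizontal complex whose $r$-th term is $\bigoplus_{\dim F = r} V_F$, with differential $\partial_1$ induced by the natural quotient maps $V_F \to V_{F'}$ for cover relations $F' \lessdot F$.

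I would then identify this horizontal complex with the Jordan-type complex $E_{\bullet, p}$ of the normal fan $\Sigma_\Delta$ introduced before Lemma \ref{lem:hom tor}. Each face $F$ of $\Delta$ corresponds to the cone $\sigma_F \in \Sigma_\Delta$ with $T_\FF(\sigma_F) = F^\perp$ and $\dim \sigma_F = n - \dim F$; inclusion-reversingly, cover relations $F' \lessdot F$ correspond to cover relations $\sigma_F \lessdot \sigma_{F'}$. Under this duality, $V_F$ matches the summand of $E_{r, p}$ indexed by $\sigma_F$, and the quotient maps $\bigwedge^p((\FF^n)^\vee/F^\perp) \to \bigwedge^p((\FF^n)^\vee/F'^\perp)$ agree with the projection differential defining $E_{\bullet, p}$. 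This gives $h_q(\Gamma; \F_p^0) = \dim H_{q, p}(\Sigma_\Delta)$.

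Finally, since $\Delta$ is non-singular and full-dimensional, $\mbox{Tor}_\C(\Delta)$ is a smooth projective toric variety, so its $\FF$-homology (equal to Borel--Moore homology by compactness) is free and concentrated in even degrees, with $H_{2p}(\mbox{Tor}_\C(\Delta); \FF) \cong A_p(\mbox{Tor}_\C(\Delta)) \otimes \FF$ of rank $h^{p,p}(\mbox{Tor}_\C(\Delta))$, and $h^{p,q}(\mbox{Tor}_\C(\Delta)) = 0$ for $p \ne q$. Combining this with Lemma \ref{lem:hom tor}, which splits $H^{BM}_i(\mbox{Tor}_\C(\Delta); \FF) = \bigoplus_{r+p=i} H_{r,p}(\Sigma_\Delta)$ and identifies $H_{p, p}(\Sigma_\Delta) \cong A_p(\mbox{Tor}_\C(\Delta)) \otimes \FF$, a dimension count forces $H_{r, p}(\Sigma_\Delta) = 0$ whenever $r \ne p$. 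The equality $h_q(\Gamma; \F_p^0) = h^{p,q}(\mbox{Tor}_\C(\Delta))$ then follows. The step requiring the most care is the precise matching of differentials in the identification with $E_{\bullet,p}$, which depends on carefully tracking the contravariant cosheaf structure of $\F_p^0$ under the normal-fan duality; beyond that, the proof is a direct assembly of the tools already developed in this section.
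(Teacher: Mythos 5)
Your proposal is correct and follows essentially the same route as the paper's proof: the same filtration by $\dim F$, the same contractibility argument collapsing each column to $\bigwedge^p\bigl((\FF^n)^\vee/F^\perp\bigr)$ via Lemma \ref{lem:p1}, the identification of the resulting face complex with the toric complex of Lemma \ref{lem:hom tor} (via the normal fan, which the paper leaves implicit), and the concluding dimension count using the Jurkiewicz--Danilov theorem.
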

\begin{proof}
 We compute the groups $H_q(\Gamma; \F_p^0)$ using the filtration
 $\displaystyle F'_i = \bigoplus_{r,s\;: \; r\le i}E^0_{r,s}$
 (see Figure \ref{fig:F2}).
 Each space $E^0_{r,s}$ splits into the direct sum
 \[
 E^0_{r,s}=\bigoplus_{\dim F=r}E^0_{F,s}\qquad \mbox{with}\qquad
 E^0_{F,s}=\bigoplus_{\substack{\sigma\subset F\\ \dim \sigma=-s}}\F^0_p(F,\sigma),
 \]
 and the differential complex (from Figure \ref{fig:F2} rotated by 90 degrees)
  \[
\begin{tikzcd}
 \\  \arrow{r}{\partial_2}   &  E^0_{r,s+1}
\arrow{r}{\partial_2}   &  E^0_{r,s}
\arrow{r}{\partial_2}   &  E^0_{r,s-1}
\arrow{r}{\partial_2}   & \
\end{tikzcd}
\]
splits into the direct sums of the differential complexes
  \[
\begin{tikzcd}
   \arrow{r}{\partial_2}   &  E^0_{F,s+1}
\arrow{r}{\partial_2}   &  E^0_{F,s}
\arrow{r}{\partial_2}   &  E^0_{F,s-1}
\arrow{r}{\partial_2}   & \ 
\end{tikzcd}.
\]
Since $\F^0_p(F,\sigma)=\bw^p \left((\FF^n)^\vee/F^\perp \right)$
only depends on $F$, the latter differential
complex is nothing but the standard simplicial differential complex of 
$F$ with the subdivision induced by $\Gamma$, with coefficients in
$\bw^p \left((\FF^n)^\vee/F^\perp \right)$. Since $F$ is a
contractible topological space, we obtain
\[
H_0(E^0_{F,\bullet})=\bw^p \left((\FF^n)^\vee/F^\perp \right)
\qquad\mbox{and}\qquad 
H_s(E^0_{F,\bullet})=0 \ \forall s>0,
\]
from which we deduce that
\[
H_0(E^0_{r,\bullet})=\bigoplus_{\dim F=r}\bw^p \left((\FF^n)^\vee/F^\perp \right)
\qquad\mbox{and}\qquad 
H_s(E^0_{r,\bullet})=0 \ \forall s>0.
\]
Hence by Lemma \ref{lem:p1} we obtain that the homology
$H_\bullet(\Gamma;\F^0_p)$ is the homology of the complex
  \[
\begin{tikzcd}
  0 \to \bw^p (\FF^n)^\vee \to
\displaystyle \bigoplus_{\dim F=n-1} \bw^p \left((\FF^n)^\vee/F^\perp \right)
\to   \cdots \to
\displaystyle  \bigoplus_{\dim F=p} \bw^p \left((\FF^n)^\vee/F^\perp
\right)\to 0
\end{tikzcd},
\]
where the differential maps a vector
$v\in \bw^p \left((\FF^n)^\vee/F^\perp  \right)$ to the sum of the
natural projections of $v$ to
$\bw^p \left((\FF^n)^\vee/G^\perp\right)$ for all facets $G$ of $F$.
By Lemma \ref{lem:hom tor} we obtain
\[
H_i(\mbox{Tor}_\C(\Delta);\FF)=\bigoplus_{r+p=i} H_r(\Gamma;\F^0_p).
\]

On the other hand,   the Jurkiewicz–Danilov Theorem \cite[Theorem
  10.8]{Dan78} says that
integer homology and Chow groups of $\mbox{Tor}_\C(\Delta)$ are torsion-free and
\[
b_{2p}(\mbox{Tor}_\C(\Delta))=\rk (A_p(\mbox{Tor}_\C(\Delta)))=h^{p,p}(\mbox{Tor}_\C(\Delta))
\qquad\mbox{and}\qquad
b_{2p+1}(\mbox{Tor}_\C(\Delta))=0.
\]
Hence $h_p(\Gamma;\F^0_p)=\rk (A_p(\mbox{Tor}_\C(\Delta)))= h^{p,p}(\mbox{Tor}_\C(\Delta))$ by Lemma
\ref{lem:hom tor} and we deduce that
$h_q(\Gamma;\F^0_p)=0=h^{p,q}(\mbox{Tor}_\C(\Delta))$ if $p\ne q$.
\end{proof}

\section{Homology of $\F_p^k$}\label{sec:hodge}

Here we prove Theorem \ref{thm:hodge} following the strategy
indicated in the introduction: we first prove a heredity statement
(Proposition \ref{prop:her})
in 
Section \ref{sec:heredity}, then  Poincaré duality 
(Theorem \ref{prop:dual}) in 
 Section
 \ref{sec:poincare}, and end the proof by equating tropical and
 complex $p$-characteristics in Section
\ref{sec:proof hodge}.

\subsection{Heredity}\label{sec:heredity}

Given $(F,\sigma)\in\Xi$, we denote by $F_\sigma$  the minimal face of
$\Delta$ containing $\sigma$.  
\begin{lemma} \label{lem:sigmacomplex}
  Given integers $p,k\ge 0$, 
	consider the spectral sequence from Figure \ref{fig:F1} with respect to 
	$\Xi$ and $\F = \F^k_p$. Then the first page satisfies
	\[
	  E^1_{n,s} = \bigoplus_{\dim \sigma=-s} \F^k_{p+\dim F_\sigma-n}(F_\sigma, \sigma)
\qquad\mbox{and}\qquad
 E^1_{r,s}=0 \quad \mbox{if }r\ne n.
  \]
\end{lemma}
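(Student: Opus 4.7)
The plan is to decompose the $\partial_1$-complex according to the simplices $\sigma$ of $\Gamma$. Cover relations $(G,\tau) \lessdot (F,\sigma)$ in $\Xi$ come in two types -- either $G \lessdot F$ with $\tau = \sigma$, or $F = G$ with $\sigma \lessdot \tau$ in $\Gamma$ -- and the first type is precisely what $\partial_1$ counts. Thus $\partial_1$ preserves the decomposition by $\sigma$, so
\[
E^0_{\bullet, s} \ =\  \bigoplus_{\substack{\sigma \in \Gamma \\ \dim \sigma = -s}} C^\sigma_\bullet,
\qquad
C^\sigma_r \ =\ \bigoplus_{\substack{F \supset \sigma \\ \dim F = r}} \F^k_p(F,\sigma),
\qquad
E^1_{r,s} \ =\ \bigoplus_{\dim \sigma = -s} H_r(C^\sigma_\bullet).
\]
The lemma then reduces to showing, for each fixed $\sigma$, that $H_r(C^\sigma_\bullet) = 0$ for $r \neq n$ and $H_n(C^\sigma_\bullet) \cong \F^k_{p + \dim F_\sigma - n}(F_\sigma, \sigma)$.

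To establish this, set $m = \dim F_\sigma$ and $d = \dim\sigma$ and, following the setup in the proof of Lemma \ref{lem:dim Fp} together with the non-singularity of $\Delta$ along $F_\sigma$, pick a $\Z$-basis $e_1, \dots, e_n$ of $\Z^n$ such that $e_1, \dots, e_d$ are the edges of $\sigma$ at a vertex $v_0$, $e_1, \dots, e_m$ span $T_\Z(F_\sigma)$, and the classes of $e_{m+1}, \dots, e_n$ modulo $T_\Z(F_\sigma)$ form the $\Z$-basis of $\Z^n/T_\Z(F_\sigma)$ dual to the primitive generators of the rays of the normal cone $\sigma_{F_\sigma}$. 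With this choice the faces $F \supset F_\sigma$ correspond bijectively to subsets $S_F \subset \{m+1, \dots, n\}$ via $T_\Z(F) = T_\Z(F_\sigma) + \operatorname{span}(e_j : j \in S_F)$, with $\dim F = m + |S_F|$ and inclusion of faces matching inclusion of subsets. Arguing as in the proof of Lemma \ref{lem:dim Fp}, the subspace $\F^k_p(F, \sigma) \subset \bw^p T_\FF(F)^\vee$ is freely spanned by the pure wedges $e_J^\vee$ for $J \subset \{1, \dots, m\} \cup S_F$ satisfying $|J| = p$ and $|J \cap \{1, \dots, d\}| \leq d-k$; and under the cosheaf restriction $\F^k_p(F, \sigma) \to \F^k_p(G, \sigma)$ for a cover $G \lessdot F$ (obtained by removing a single index $j \in S_F$), the wedge $e_J^\vee$ maps to $e_J^\vee$ if $j \notin J$ and to $0$ otherwise.

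Consequently $C^\sigma_\bullet$ splits as a direct sum of subcomplexes $C^J_\bullet$ indexed by the subsets $J \subset \{1, \dots, n\}$ with $|J| = p$ and $|J \cap \{1, \dots, d\}| \leq d-k$. Each $C^J_\bullet$ contains a copy of $\FF$ precisely at those $F$ with $S_F \supseteq J_2 := J \cap \{m+1, \dots, n\}$, and via $S_F \mapsto S_F \setminus J_2$ it identifies with the chain complex indexed by subsets $S' \subseteq V_J := \{m+1, \dots, n\} \setminus J_2$ equipped with the standard differential $\partial(S') = \sum_{j \in S'} S' \setminus \{j\}$. An elementary check shows that this complex is acyclic whenever $V_J$ is non-empty, while if $V_J = \emptyset$ (i.e.\ $J \supseteq \{m+1, \dots, n\}$), only $F = \Delta$ supports $e_J^\vee$, giving a single copy of $\FF$ concentrated in degree $r = n$. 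Combining, $H_r(C^\sigma_\bullet) = 0$ for $r \neq n$, while $H_n(C^\sigma_\bullet)$ is freely spanned by the $e_J^\vee$ with $J = J_1 \sqcup \{m+1, \dots, n\}$, $|J_1| = p + m - n$ and $|J_1 \cap \{1, \dots, d\}| \leq d - k$. The assignment $e_J^\vee \mapsto e_{J_1}^\vee$ identifies $H_n(C^\sigma_\bullet)$ with $\F^k_{p + m - n}(F_\sigma, \sigma)$; over $\FF = \Z/2$ this is canonical since it amounts to contraction by the unique non-zero element of the one-dimensional $\FF$-line $\bw^{n-m} T_\FF(F_\sigma)^\perp$.

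The main obstacle I foresee is purely combinatorial bookkeeping: checking that the $e_J^\vee$-decomposition is genuinely compatible with the sub-cosheaf structure $\F^k_p \subset \F^0_p$ (which boils down to noting that the condition $|J \cap \{1, \dots, d\}| \leq d - k$ depends only on $J$ and is thus stable under restrictions, which only modify $S_F$), and verifying acyclicity of the simplex-style chain complex on a non-empty vertex set. Both steps are routine once the adapted basis is in place.
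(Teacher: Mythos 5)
Your proof is correct, and its first half coincides with the paper's: both split $E^0_{\bullet,s}$ into subcomplexes indexed by the simplices $\sigma$ with $\dim\sigma=-s$, and both use non-singularity of $\Delta$ to choose a basis of $\Z^n$ adapted simultaneously to $F_\sigma$ and to every face $F\supseteq F_\sigma$. Where you diverge is the acyclicity step. The paper factors $\F^k_p(F,\sigma)\cong\bigoplus_{a+b=p}\F^k_a(F_\sigma,\sigma)\otimes\bw^b\left(F_\sigma^\perp/F^\perp\right)$ compatibly with $\partial_1$, so the homology of each $\sigma$-subcomplex becomes $\bigoplus_{a+b=p}\F^k_a(F_\sigma,\sigma)\otimes H_\bullet(\D_b)$ for auxiliary complexes $\D_b$ whose homology, concentrated in bidegree $(r,b)=(n,\,n-\dim F_\sigma)$, is imported from Corollary \ref{cor:hom aff} --- i.e.\ ultimately from the Borel--Moore homology of affine space via Lemma \ref{lem:hom tor}. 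You instead refine all the way down to the rank-one monomial subcomplexes spanned by a fixed $e_J^\vee$ and observe that each is the augmented chain complex of a simplex on the vertex set $V_J$, hence acyclic unless $V_J=\emptyset$. The two computations are equivalent (your monomial complexes with $J_1$ fixed assemble into the paper's $\D_b$), but your route is more elementary and self-contained: it bypasses the toric-geometry input behind Corollary \ref{cor:hom aff}, at the cost of the bookkeeping --- which you handle correctly --- that the monomial basis from the proof of Lemma \ref{lem:dim Fp} is respected by the restriction maps. What the paper's version buys is brevity, since Corollary \ref{cor:hom aff} is in any case already needed for Corollary \ref{lem:eq tor}.
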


\begin{proof}
  Note that the complex
	$E^0_{\bullet,s}$ splits into the direct sum
 \[
	 E^0_{\bullet,s}=\bigoplus_{\dim \sigma=-s}E^0_{\bullet,\sigma}\qquad \mbox{with}\qquad
	 E^0_{\bullet,\sigma}=\bigoplus_{F \supset \sigma} \F^k_p(F,\sigma).
 \]
  We denote by $H_\bullet(\sigma;\F^k_p)$ the homology (with respect to $\partial_1$) of these
  subcomplexes.
	It is hence sufficient to prove
	\[
H_n(\sigma;\F^k_p)= \F^k_{p+\dim F_\sigma-n}(F_\sigma, \sigma)
\qquad\mbox{and}\qquad
H_r(\sigma;\F^k_p)=0 \quad \mbox{if }r\ne n.
\]
for all $\sigma \in \Gamma$. To do so, let us fix suitable coordinates.  
Since $\Delta$ is a non-singular polytope, there exists a basis
$(e_1,\cdots,e_n)$ of $\Z^n$ such that
\begin{itemize}
\item $(e_1,\cdots,e_{r_0})$ is a basis of $T_{\Z}(F_\sigma)$;
\item 
  for any $F \supset F_\sigma$, a basis for $T_{\Z}(F)$ can be obtained by 
	completing
  $(e_1,\cdots,e_{r_0})$ with vectors from $\{e_{r_0+1},\cdots e_n\}$. 
\end{itemize}
The choice of the basis $(e_1,\cdots,e_n)$ induces a direct sum
decomposition
\[
\tau^\perp/F^\perp = \tau^\perp/F_\sigma^\perp\oplus F_\sigma^\perp/F^\perp
\]
for all $k$-dimensional faces $\tau$  of $\sigma$ and all faces $F$ of
$\Delta$ containing $F_\sigma$ which is compatible with the differential
$\partial_1$.
Hence we have the direct sum decomposition
\begin{align*}
  \F^k_p(F,\sigma) &=
\displaystyle \sum_{\substack{\tau\subset \sigma\\\dim \tau=k}}
  \bw^p \left(\tau^\perp/F^\perp \right)
\\ &=  \sum_{\substack{\tau\subset \sigma\\\dim \tau=k}}\left(\bigoplus_{a+b=p}
\bw^a\left( \tau^\perp/F_\sigma^\perp\right)\oplus \bw^b\left( F_\sigma^\perp/F^\perp\right)\right)
\\ &= \bigoplus_{a+b=p}\left(
\F^k_a(F_\sigma, \sigma)\oplus \bw^b\left( F_\sigma^\perp/F^\perp\right)\right),
\end{align*}
and the differential $\partial_1$ respects the $(a,b)$-direct sum decomposition.
As a consequence we obtain
\[
H_r(\sigma,\F^k_p)= \bigoplus_{a+b=p}  \F^k_a(F_\sigma, \sigma)\otimes
H_r(\D_b),
\]
where $\D_b$ is the differential complex
  \[
\begin{tikzcd}
  0 \to \bw^b F_\sigma^\perp \to
\displaystyle \bigoplus_{\substack{\dim F=n-1 \\ F_\sigma\subset F}} \bw^b \left(F_\sigma^\perp/F^\perp \right)
\to   \cdots \to
\displaystyle  \bigoplus_{\substack{\dim F=\dim F_\sigma+b \\ F_\sigma\subset F}} \bw^b \left(F_\sigma^\perp/F^\perp
\right)\to 0
\end{tikzcd}
\]
with differential mapping a vector
$v\in \bw^b \left(F_\sigma^\perp/F^\perp  \right)$ to the sum of the
natural projections of $v$ to
$\bw^b \left(F_\sigma^\perp/G^\perp\right)$ for all facets $G$ of $F$.
By Corollary \ref{cor:hom aff}  we have
\[
H_{n}(\D_{n-\dim F_\sigma})=\FF \qquad\mbox{and}\qquad
H_{r}(\D_b)=0 \quad \mbox{if }(r,b)\ne (n,n-\dim F_\sigma),
\]
which proves the claim.
\end{proof}

As a consequence we obtain that  homology of $\F^k_p$ is partially
inherited from  homology of $\F^0_p$.

\begin{prop}[Heredity]\label{prop:her}
   If $p,q$ and $k$ are such that $p+q<n-k$, then
  \[
  H_q(\Gamma;\F^k_p)= H_q(\Gamma;\F^0_p).
  \]
\end{prop}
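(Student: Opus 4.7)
The plan is to use Lemma \ref{lem:sigmacomplex} to collapse the spectral sequence of Figure \ref{fig:F1} onto its $r = n$ row, and then to identify the resulting complexes for $\F^k_p$ and $\F^0_p$ in the range of indices relevant to computing $H_q$.

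In more detail, Lemma \ref{lem:sigmacomplex} applied to both $\F^k_p$ and $\F^0_p$ shows that $E^1_{r,s} = 0$ unless $r = n$, so the spectral sequence degenerates at $E^2$ and gives
\[
H_q(\Gamma;\F^k_p) \;=\; H_{q-n}(E^1_{n,\bullet};\partial_2),
\]
with $E^1_{n,s} = \bigoplus_{\dim\sigma = -s} \F^k_{p + \dim F_\sigma - n}(F_\sigma, \sigma)$, and an analogous identity for $\F^0_p$. The homology $H_{q-n}$ is determined by the three terms of this complex at $s \in \{q-n-1,\, q-n,\, q-n+1\}$, that is, at $\dim\sigma \in \{n-q+1,\, n-q,\, n-q-1\}$.

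Next, I would invoke Lemma \ref{lem:dim Fp}: the equality $\F^k_{p'}(F_\sigma, \sigma) = \F^0_{p'}(F_\sigma, \sigma)$ holds as soon as $p' \leq \dim \sigma - k$. Setting $p' = p + \dim F_\sigma - n \leq p$, a sufficient condition is $\dim \sigma \geq p + k$. The smallest value of $\dim\sigma$ occurring in the three relevant terms is $n - q - 1$, and the bound $n - q - 1 \geq p + k$ is precisely the hypothesis $p + q < n - k$. Since $\F^k_p$ sits inside $\F^0_p$ as a subcosheaf and the differential $\partial_2$ is induced by the cosheaf structure, the maps $\partial_2$ agree on the parts where the vector spaces coincide. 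The kernel-modulo-image computation at $s = q - n$ therefore yields the same result for both cosheaves, proving $H_q(\Gamma; \F^k_p) = H_q(\Gamma; \F^0_p)$.

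The hard part will be, modestly, the index bookkeeping: one has to track the sign convention $s = -\dim \sigma$ arising from $\Gamma^{\text{op}}$, the shift in the wedge degree from $p$ to $p + \dim F_\sigma - n$ produced by Lemma \ref{lem:sigmacomplex}, and the exact range of $s$ that enters the kernel-modulo-image computation. The matching at the boundary case $p + q + k = n - 1$ is what makes the heredity range sharp.
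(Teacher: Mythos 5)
Your proposal is correct and follows essentially the same route as the paper's proof: collapse the spectral sequence of Figure \ref{fig:F1} onto the $r=n$ row via Lemma \ref{lem:sigmacomplex}, apply Lemma \ref{lem:p1}, and use the equality criterion $p' \le \dim\sigma - k$ from Lemma \ref{lem:dim Fp} on the three relevant columns, with the binding case $\dim\sigma = n-q-1$ giving exactly the hypothesis $p+q<n-k$. The index bookkeeping you flag as the hard part is carried out correctly.
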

\begin{proof}
In view of Lemma \ref{lem:p1} and Lemma \ref{lem:sigmacomplex}, it is sufficient to prove
that 
\[
\bigoplus_{\dim \sigma=-s} \F^k_{p+\dim F_\sigma-n}(F_\sigma, \sigma)
=\bigoplus_{\dim \sigma=-s} \F^0_{p+\dim F_\sigma-n}(F_\sigma, \sigma)
\]
for all $s$ such that $q = n+s \leq n-k-p$.
But for such $s$ and $\sigma$ with $\dim(\sigma) = -s$, we get
\[
  p+\dim F_\sigma-n \leq p \leq -s -k = \dim \sigma -k
\]
and hence by Lemma \ref{lem:dim Fp} the equality holds. 
\end{proof}

\subsection{Poincaré duality}\label{sec:poincare}

In this section, we prove the following theorem. 

\begin{thm}[Poincaré duality]\label{prop:dual}
  For any $k$, $p$ and $q$, we have canonical isomorphisms
	\[
	  \Hom(H_q(\Gamma;\F_p^k), \FF) \cong H_{n-k-q}(\Gamma;\F_{n-k-p}^k).
	\]
\end{thm}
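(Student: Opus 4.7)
The plan is to establish Poincaré duality at the chain level, by constructing a natural pairing
\[
\langle -, - \rangle \colon C_q(\Xi; \F_p^k) \otimes C_{n-k-q}(\Xi; \F_{n-k-p}^k) \to \FF
\]
satisfying $\langle \partial \alpha, \beta \rangle = \langle \alpha, \partial \beta \rangle$ and inducing a perfect pairing on homology. Once this is in hand, the induced morphism of chain complexes is an isomorphism, and the universal coefficient theorem over the field $\FF$ yields the claimed canonical duality at the homology level. This mirrors the strategy of \cite{JRS-Lefschetz11Theorem} for tropical manifolds; the point is to carry out the argument purely combinatorially on the poset $\Xi$, so that convexity of $\Gamma$ (and hence the tropical manifold structure of the dual subdivision) nowhere enters.

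The basic input for the pairing is that for $V = (\FF^n)^\vee/F^\perp$ the wedge product $\bw^p V \otimes \bw^{\dim F - p} V \to \bw^{\dim F} V \cong \FF$ is a perfect duality of $\FF$-vector spaces. The degrees we must pair are $p$ and $n-k-p$, rather than $p$ and $\dim F - p$; the discrepancy $\dim F - (n-k)$ equals the codegree of $(F,\sigma)$ in $\Xi$ minus $(\dim \sigma - k)$, and it is precisely this identity that makes the pairing homogeneous of total codegree $n-k$ once one sums over cell pairs, not just on the diagonal. To pin down the pairing on the subspaces $\F_p^k(F,\sigma) \subset \bw^p V$, I would use the explicit basis from the proof of Lemma \ref{lem:dim Fp}, indexed by the set $\mathfrak{J}$ of $p$-subsets of $\{1,\ldots,\dim F\}$ having a prescribed intersection pattern with $\{1,\ldots,\dim\sigma\}$: a basis element $J$ of $\F_p^k(F,\sigma)$ pairs non-trivially with the basis element $\{1,\ldots,\dim F\}\setminus J$ of $\F_{n-k-p}^k$ on a dual cell, and an elementary check shows that the condition $|J \cap \{1,\ldots,\dim\sigma\}| \leq \dim \sigma - k$ is preserved under taking complements between $\F_p^k$ and $\F_{n-k-p}^k$.

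To verify the adjointness $\langle \partial \alpha, \beta \rangle = \langle \alpha, \partial \beta \rangle$, one analyses the two types of cover relations in $\Xi$, according to whether $\dim F$ increases by one in the face lattice of $\Delta$, or $\dim \sigma$ decreases by one in $\Gamma^{\text{op}}$. In each case, one has to check that the cosheaf map $\iota_{y,x} \colon \F_p^k(y) \to \F_p^k(x)$ is adjoint, under the wedge pairing, to the analogous map on the dual cell pair. This compatibility ultimately comes from the fact that all the subspaces $\F_p^k(F,\sigma)$ sit inside a common ambient exterior algebra on $(\FF^n)^\vee$, so that the requisite adjointness reduces to the standard compatibility of the wedge pairing with inclusions and projections between complementary subspaces.

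The main obstacle I anticipate is the precise bookkeeping in this adjointness check. Although the idea is clean, the subspace $\F_p^k(F,\sigma) = \sum_{\tau \subset \sigma,\, \dim \tau = k} \bw^p(\tau^\perp/F^\perp)$ is not itself an exterior power, so across a cover relation the inclusion $\iota_{y,x}$ does not act diagonally on the $\mathfrak{J}$-basis but rather mixes several basis vectors together. One must track carefully which combinations are produced on each side of $\partial$, and check that they match under the dual pairing; the key structural fact is that, for a unimodular triangulation $\Gamma$ of a non-singular $\Delta$, the basis of Lemma \ref{lem:dim Fp} is adapted uniformly enough to make this matching hold cell-by-cell. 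This is precisely where \cite{JRS-Lefschetz11Theorem} implicitly leans on the tropical manifold structure, and is the combinatorial content of the free-from-convexity extension that the paper proposes.
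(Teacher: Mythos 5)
There is a genuine gap, and it sits at the very first step of your plan. A chain-level perfect pairing $C_q(\Xi;\F_p^k)\otimes C_{n-k-q}(\Xi;\F_{n-k-p}^k)\to\FF$ cannot exist, because these two chain groups have different dimensions in general. Concretely, take $n=1$, $k=0$, $\Delta=[0,d]$ with the unimodular triangulation, $p=q=1$: the cells of dimension $1$ are the pairs $(\Delta,v)$ with $v$ a vertex of $\Gamma$, so $\dim C_1(\Xi;\F_1^0)=d+1$, while the cells of dimension $0$ are $(v_0,v_0)$, $(v_d,v_d)$ and $(\Delta,e)$ for the $d$ edges, so $\dim C_0(\Xi;\F_0^0)=d+2$. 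The same mismatch appears locally: by Lemma \ref{lem:dim Fp}, $\dim\F_p^k(F,\sigma)\neq\dim\F_{n-k-p}^k(F,\sigma)$ as soon as $\dim F\neq n$ (e.g.\ $\binom{n-1-k}{p}$ versus $\binom{n-1-k}{p-1}$ for a facet $F$), and $\Xi$ carries no dimension-reversing involution that could supply the "dual cell" your pairing needs — the complement $\{1,\dots,\dim F\}\setminus J$ does not even have cardinality $n-k-p$ unless $F=\Delta$. So the duality here is emphatically not a cell-by-cell intersection pairing, and no amount of bookkeeping in the adjointness check will repair this.

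The deeper missing ingredient is the local-to-global mechanism. Even if you had an adjoint (non-perfect) pairing, adjointness only gives you \emph{some} map on homology; proving it is an isomorphism is the entire content of the theorem and requires a local statement plus a gluing argument. The paper does this by (i) realizing $\Xi$ as the face poset of a regular CW-structure on $\Delta$ and comparing with the barycentric model, (ii) defining a cap product of cochains of the dual sheaf $\G^p_k$ on the order complex with a fundamental class $\fund^k\in C_{n-k}(\Xi;\F_{n-k}^k)$, (iii) verifying the duality on open stars $U(x)$ — where it reduces to Poincaré duality for the matroidal fan $H^k\times\R^l\times\TT^s$, i.e.\ \cite[Corollary 5.9]{JRS-Lefschetz11Theorem}, a nontrivial Orlik--Solomon-type statement about the homology of the whole star, not linear algebra on a single cell — and (iv) a Mayer--Vietoris induction over a cover by stars. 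Your proposal contains no substitute for steps (iii) and (iv); "standard compatibility of the wedge pairing with inclusions and projections" does not see the matroidal structure of $\F_p^k(F,\sigma)=\sum_\tau\bw^p(\tau^\perp/F^\perp)$ that makes the local duality true.
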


We prove  Theorem \ref{prop:dual} in several steps. We start to show
in Section \ref{sec:geometric} that the poset $\Xi$ is the face poset
of a regular $CW$-complex, from which we deduce in Section \ref{sec:bary} that
the considered homology groups  are invariants under barycentric
subdivision.
We then follow the strategy from \cite{JRS-Lefschetz11Theorem}, that is, 
we consider the sheaf dual to $\F_p^k$ and its cohomology in
Section \ref{sec:coh}, define the cap product in Section \ref{sec:cap}, and we mix
all these ingredients in Section \ref{sec:PD} to eventually prove Theorem
\ref{prop:dual}. 

\subsubsection{Geometric realization of $\Xi$}\label{sec:geometric}
Recall that any regular $CW$-complex has an underlying face poset, where the
partial order is given by inclusion among faces.
\begin{proposition} \label{prop:CWstructure}
  The poset $\Xi$ is the face poset of a regular $CW$-structure for $\Delta$. 
\end{proposition}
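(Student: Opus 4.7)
The plan is to realize each $(F, \sigma) \in \Xi$ as a genuine closed cell $c(F, \sigma) \subseteq \Delta$ inside the barycentric subdivision $\bary(\Gamma)$, in such a way that the open cells $c^\circ(F, \sigma)$ partition $\Delta$ and the inclusion of closures recovers the partial order on $\Xi$.

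First, to each point $x \in \Delta$ I attach a pair $(F(x), \sigma(x)) \in \Xi$ as follows. Let $F(x)$ be the minimal face of $\Delta$ containing $x$ in its relative interior, and let $\tau(x) \in \Gamma|_{F(x)}$ be the simplex whose relative interior contains $x$. Then $x$ lies in the relative interior of a unique barycentric simplex $[b_{\sigma_0}, \ldots, b_{\sigma_k}]$ of $\bary(\Gamma|_{F(x)})$ with $\sigma_k = \tau(x)$, and I set $\sigma(x) := \sigma_0$. Defining $c^\circ(F, \sigma) := \{x \in \Delta : F(x) = F, \; \sigma(x) = \sigma\}$, these sets partition $\Delta$ by construction, and the closure $c(F, \sigma)$ is the union of all closed simplices $[b_{\sigma_0}, \ldots, b_{\sigma_k}]$ of $\bary(\Gamma)$ with $\sigma_0 = \sigma$ and $\sigma_k$ of minimal $\Delta$-face $F$.

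The main step is to show that $c(F, \sigma)$ is a closed PL-ball of dimension $\dim F - \dim \sigma$. Every defining simplex contains $b_\sigma$ as a vertex, so $c(F, \sigma) = \{b_\sigma\} * B(F, \sigma)$ is the join of the apex $b_\sigma$ with the subcomplex $B(F, \sigma)$ spanned by chains $\sigma \subsetneq \sigma_1 \subsetneq \cdots \subsetneq \sigma_k$ whose top $\sigma_k$ has minimal $\Delta$-face $F$. The key combinatorial observation is that every $\tau$ in the poset $L := \{\rho \in \Gamma : \sigma \subsetneq \rho \subseteq F\}$ is contained in some $(\dim F)$-dimensional simplex of $\Gamma|_F$, which automatically has minimal $\Delta$-face $F$; hence $B(F, \sigma)$ coincides with the full order complex of $L$, which in turn is canonically identified with (the barycentric subdivision of) the simplicial link $\operatorname{lk}(\sigma, \Gamma|_F)$. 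Since $F$ is a convex polytope, hence a PL-ball, and $\Gamma|_F$ a PL-triangulation, this link is a PL-sphere of dimension $\dim F - \dim \sigma - 1$ when $\sigma$ has minimal $\Delta$-face $F$, and a PL-ball of the same dimension when $\sigma$ is contained in some proper face of $F$. In both cases the cone $\{b_\sigma\} * B(F, \sigma)$ is a PL-ball of dimension $\dim F - \dim \sigma$.

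To conclude, I verify that the face poset of closures matches $\Xi$, i.e.\ $c(G, \tau) \subseteq c(F, \sigma)$ if and only if $\sigma \subseteq \tau \subseteq G \subseteq F$. The direction $\Leftarrow$ follows by extending any defining chain for $c(G, \tau)$ at the bottom by the inclusion $\sigma \subsetneq \tau$ (if needed) and at the top by a top-dimensional simplex of $\Gamma|_F$; the converse follows from the uniqueness of the carrier in $\bary(\Gamma)$ of any point $x \in c^\circ(G, \tau)$, which forces $\sigma$ and $\tau$ to appear in a common defining chain of $c(F, \sigma)$. The main obstacle throughout is the topological step: the reduction $B(F, \sigma) \cong \operatorname{lk}(\sigma, \Gamma|_F)$ is precisely what lets us cite standard PL-topology of triangulated convex polytopes rather than invoking any convexity or tropical duality argument.
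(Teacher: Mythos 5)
Your proof is correct, and the cell decomposition you use is exactly the paper's: your $c(F,\sigma)$ coincides with the paper's $C(F,\sigma)$ (any flag with $\sigma\subset\first(\sigma_\bullet)$ and $\last(\sigma_\bullet)\subset F$ extends, by prepending $\sigma$ and appending a top-dimensional simplex of $\Gamma|_F$, to one of your defining chains), and your verification that inclusion of closed cells matches the order on $\Xi$ is the same carrier argument the paper uses. Where you genuinely diverge is in the key topological step showing each cell is a ball. The paper projects to $T(F)/T(\sigma)$, identifies $C(F,\sigma)$ with the barycentric subdivision of the star fan of $F\cap\Gamma$ at $\sigma$ intersected with a unit ball, and concludes from the elementary fact that a full-dimensional compact convex set (here, the inner cone of $F$ at $\sigma$ cut by the ball) is a ball. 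You instead write the cell as the simplicial cone $\{b_\sigma\}\ast B(F,\sigma)$, identify $B(F,\sigma)$ with the barycentric subdivision of $\operatorname{lk}(\sigma,\Gamma|_F)$ (your observation that every $\rho$ with $\sigma\subsetneq\rho\subseteq F$ lies in a full-dimensional simplex of $\Gamma|_F$ is the right point here), and invoke the PL-manifold theorem that links of simplices in a combinatorial ball are PL-spheres (interior simplices) or PL-balls (boundary simplices). This buys you a cleaner sphere/ball dichotomy and avoids the projection step, but it leans on a heavier input: you need $\Gamma|_F$ to be a \emph{combinatorial} triangulation of the polytope $F$, which holds because it is a linear triangulation, but is a less elementary fact than convexity of the inner cone. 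One point you (like the paper) treat only implicitly is that $c^\circ(F,\sigma)$ is precisely the open ball of $c(F,\sigma)$; in your setup this follows from $\partial(b_\sigma\ast B)=(b_\sigma\ast\partial B)\cup B$ together with $\partial\operatorname{lk}(\sigma,\Gamma|_F)=\operatorname{lk}(\sigma,\Gamma|_{\partial F})$, which identifies the boundary sphere with exactly the flags having $\sigma_0\supsetneq\sigma$ or $F_{\sigma_k}\subsetneq F$; it would be worth one sentence to record this.
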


Given a poset $P$, we denote by 
$O(P)$ the order complex of $P$. 
This is the simplicial complex whose vertices are the elements of $P$ and whose 
faces are the flags in $P$. (We use the term flag instead of chain in order to avoid conflicts
with chains in the sense of homology). 
Given a flag $X = (x_0 < \dots < x_l)$, we denote its length by 
$l(X) = l$, its first element by $\first(X) = x_0$ and its last element by $\last(X) = x_l$. 
Recall that $\Gamma$ is a triangulation of $\Delta$, and 
its barycentric subdivision is the simplicial complex $O(\Gamma)$. 
Given a flag $\sigma_\bullet \in O(\Gamma)$, we denote by $S(\sigma_\bullet)$ the corresponding simplex
in the barycentric subdivision of $\Gamma$.

\begin{definition}
  For any $(F, \sigma) \in \Xi$, we define
	\begin{gather*} 
	  O(F, \sigma) = \{\sigma_\bullet \in O(\Gamma) : \sigma
          \subset \first(\sigma_\bullet) \text{ and }
          \last(\sigma_\bullet) \subset F \},  \\
		C(F, \sigma) = \bigcup_{\sigma_\bullet \in O(F,\sigma)} S(\sigma_\bullet) \quad \subset \Delta. 
	\end{gather*}  
\end{definition}

Note that $\dim C(F,\sigma)=\dim (F,\sigma)$.
  When $\Gamma$ is convex, the subdivision
  \[
  \Delta=\bigcup_{(F,\sigma)\in\Xi} C(F,\sigma)
    \]
    is combinatorially isomorphic to  
		the dual subdivision $\SSS$ of the tropical
    toric variety $\mbox{Tor}_\mathbb T(\Delta)$ mentioned in Remark \ref{rem:DualSubdivision}. 
    \begin{example}
We illustrate in Figure
\ref{fig:trivial} some examples of cells $C(F,\sigma)$ in the case
when $\Delta$ is the standard unimodular triangle $\Delta_2$ in $\Z^2$ equipped
with the trivial subdivision.
\begin{figure}[h]
\centering
\begin{tabular}{ccccc}
  \includegraphics[height=2.8cm, angle=0]{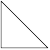}
  \put(-77,85){\tiny{$p$}}
  \put(-87,35){\tiny{$F$}}
  & \hspace{2ex} &
  \includegraphics[height=2.8cm, angle=0]{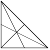}
  & \hspace{2ex} &
   \includegraphics[height=2.8cm, angle=0]{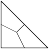}
 \\   $\Delta_2$ & &  $O(\Delta_2)$ &&   $\Delta_2=\cup_{(F,\sigma)}C(F,\sigma)$

\\ \\   \includegraphics[height=2.8cm, angle=0]{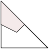}
  \put(-84,85){\tiny{$C(p,p)$}}
  \put(-107,55){\tiny{$C(F,p)$}}
  \put(-75,41){\tiny{$C(\Delta_2,p)$}}
  & \hspace{2ex} &
  \includegraphics[height=2.8cm, angle=0]{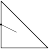}
  \put(-107,38){\tiny{$C(F,F)$}}
  \put(-75,18){\tiny{$C(\Delta_2,F)$}}
  & \hspace{2ex} &
  \includegraphics[height=2.8cm, angle=0]{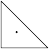}
   \put(-70,18){\tiny{$C(\Delta_2,\Delta_2)$}}
 \\  $C(\cdot,p)$&& $C(\cdot,F)$ &&  $C(\Delta_2,\Delta_2)$ 
\end{tabular}
\caption{The subdivision of $\Delta_2$
  induced by the cells $C(F,\sigma)$.}\label{fig:trivial}
\end{figure}
  \end{example}
    \begin{example}
      We depicted in Figure \ref{fig:dual} the $CW$-structure of
      $6\Delta_2$ induced by the subdivision depicted in Figure
      \ref{fig:patch}a. 
 \begin{figure}[h]
\centering
\begin{tabular}{c}
  \includegraphics[height=4cm, angle=0]{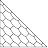}
\end{tabular}
\caption{The $CW$-structure of
      $6\Delta_2$ induced by the subdivision depicted in Figure \ref{fig:patch}a.}\label{fig:dual}
\end{figure}
   \end{example}
We claim that the cells $C(F,\sigma)$ are the cells of the regular
$CW$-structure of $\Delta$ promised in Proposition
\ref{prop:CWstructure}.  

\begin{proof}[Proof of Proposition \ref{prop:CWstructure}]
  We first note that 
	\begin{align} 
		(G, \tau) \leq (F, \sigma) & & \Longleftrightarrow && O(G,\tau) \subset O(F,\sigma) && \Longleftrightarrow && C(G,\tau) \subset C(F,\sigma),
	\end{align}
	so the order on $\Xi$ coincides with the order by inclusion of the sets $C(F,\sigma)$.
	Hence we need to check that these sets form a regular $CW$-complex. That is to say, we need to prove 
	that each cell $C(F,\sigma)$ is a ball and that its boundary $\partial C(F,\sigma)$ is 
	a union of cells.
	
	We denote by $\Omega(F, \sigma)$ the star fan of
        $F \cap  \Gamma$ with respect to the simplex $\sigma$.  
	By definition, $\Omega(F, \sigma)$ is a polyhedral fan in
        $T(F) / T(\sigma)$ whose cones are in one-to-one correspondence
	with $\sigma \subset \tau \subset F$.
        The support $C$ of
        $\Omega(F, \sigma)$ is a full-dimensional polyhedral cone in
        $T(F) / T(\sigma)$, 
	more precisely it is the inner cone of $F$ at $\sigma$. 
	We denote the barycentric subdivision of $\Omega(F, \sigma)$ by $\Omega^b(F, \sigma)$. 
	We can notice that the poset of
	cones of  $\Omega^b(F, \sigma)$ is exactly $O(F, \sigma)$.  
	Note that, when $(G, \tau) \leq (F, \sigma)$,
        the simplicial complex $C(G, \tau)$
	is simplicially homeomorphic to its image in
        $T(F) / T(\sigma)$ under the canonical projection map.  
	Moreover, up to translation, for each $\sigma_\bullet \in O(F,
        \sigma)$ the image of  
	$S(\sigma_\bullet)$ in $T(F) / T(\sigma)$ is a simplex
        obtained by cutting  
	the corresponding cone in $\Omega^b(F, \sigma)$ with a
        transversal affine hyperplane.  
	In summary $C(F, \sigma)$ is simplicially homeomorphic to the
        simplicial complex obtained by  
	intersecting $\Omega^b(F, \sigma)$ with the unit ball
        $B^{\dim(F,\sigma)} \subset T(F) / T(\sigma)$.
	But $C \cap B^{\dim(F,\sigma)}$ is a ball whose boundary is
        $(\partial C \cap B^{\dim(F,\sigma)}) \cup (C \cap \partial B^{\dim(F,\sigma)})$, 
	which under the simplicial homeomorphism corresponds to chains $\sigma_\bullet$ such that either 
	$\sigma \subsetneq \first(\sigma_\bullet)$ or
        $\last(\sigma_\bullet) \subsetneq F$. 
	Such flags belong to $O(G, \tau)$ for some $(G, \tau) < (F,
        \sigma)$, which proves the claim.  
\end{proof}

\subsubsection{The barycentric subdivision}\label{sec:bary}
The partial order on a poset $P$ induces a poset structure on the order
complex $O(P)$: given two flags $X$ and $Y$ in $O(P)$, we put $X\le Y$
if $X$ is obtained from $Y$ by removing some pieces in the flag.
In particular, we have $\last(X) \le \last(Y)$  if
$X\le Y$, and
any cosheaf  $\F$ on $P$ induces a cosheaf
on $O(P)$, still denoted by $\F$
and defined  by
\[
\F(X) = \F(\last(X)) \quad \forall X \in O(P).
\]

Given an open subset $U \subset P$, we set 
\[
  K(U) = \{X \in O(P) : \last(X) \in U\} = O(P) \setminus O(P \setminus U).
\]
Note that $O(U) \subset K(U)$ and that $K(U)$ is an open subset of $O(P) = K(P)$. 
In particular, both $O(U)$ and $K(U)$ are thin.

There is a 
chain map $\bary \colon C_\bullet(U; \F) \to C_\bullet(K(U); \F)$ defined as follows.
We 
write a chain $\alpha \in C_q(U; \F)$ 
as
\[
  \alpha = \sum_{x \in U} \alpha(x) [x]
\]
meaning that $\alpha(x)$ denotes the coefficient of $\alpha$ in the direct summand $\F(x)$.
Now, we define 
$\bary(\alpha) \in C_q(K(U); \F)$ by setting for any flag $X$ of length $q$
\[
  \bary(\alpha)(X) = \alpha(\last(X)).
\]
Here, it is understood that $\alpha(x) = 0$ if $\dim(x) \neq q$. 
It is straightforward to check that $\bary$ is a chain map. 

\begin{prop}\label{CellularSimplicial}
  Let $\F$ be a cosheaf on $\Xi$ and let $U \subset \Xi$ be an open subset. 
	Then the chain map $\bary$ from above induces isomorphisms of homology groups
	\[
	  \bary \colon H_q(U; \F) \cong H_q(K(U); \F).
	\]
\end{prop}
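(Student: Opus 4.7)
The plan is to show that $\bary$ induces an isomorphism by analysing the spectral sequence associated to the filtration of $C_\bullet(K(U); \F)$ by $\dim \last$:
\[
  F_i C_\bullet(K(U); \F) = \bigoplus_{\substack{X \in K(U)\\ \dim \last(X) \leq i}} \F(X).
\]
Since removing an element from a flag $X$ yields $Y$ with $\last(Y) \leq \last(X)$, hence $\dim \last(Y) \leq \dim \last(X)$, this is indeed a subcomplex. The resulting spectral sequence converges to $H_\bullet(K(U); \F)$, and I claim that its $E^1$ page is $C_\bullet(U; \F)$ concentrated in a single row.

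The central computation is the analysis of the associated graded at filtration level $p$, which splits as a direct sum over $x \in U$ with $\dim(x) = p$ of local complexes $L_x$. Here $L_x$ consists of flags $X \in K(U)$ with $\last(X) = x$, equipped with the component of $\partial$ preserving $\last$. Every such flag of length $q \geq 1$ has the form $(Y, x)$ for a flag $Y$ in $\Xi^{<x}$ of length $q - 1$, while the unique flag of length $0$ is $(x)$ itself. This yields a canonical identification of $L_x$ with the augmented simplicial chain complex of the order complex $O(\Xi^{<x})$, shifted up by one and tensored with the constant coefficients $\F(x)$. Using Proposition \ref{prop:CWstructure}, the order complex $O(\Xi^{<x})$ is homeomorphic to the boundary of the cell $C(x)$, a topological sphere of dimension $p - 1$ (with $S^{-1} = \emptyset$ when $p = 0$). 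Therefore $H_q(L_x) = \F(x)$ if $q = p$, and vanishes otherwise.

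It follows that $E^1_{p,0} = \bigoplus_{x \in U,\ \dim(x) = p} \F(x) = C_p(U; \F)$ while $E^1_{p,q} = 0$ for $q \neq 0$. A direct computation identifies $d^1$ with the boundary of $C_\bullet(U; \F)$: the part of $\partial$ that lands in filtration $p - 1$ is precisely the one that removes $\last(X) = x$, producing contributions indexed by cover relations $y \lessdot x$ with coefficient $\iota_{y,x}$. The spectral sequence therefore degenerates at $E^2$, giving $H_\bullet(K(U); \F) \cong H_\bullet(U; \F)$. This canonical isomorphism is induced by $\bary$, since $\bary([x])$ is precisely the sum of all maximal flags ending at $x$, which is the $E^0$-cycle representing the generator of $H_p(L_x) = \F(x)$.

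The main obstacle will be setting up the identification of $L_x$ with the (shifted, augmented) chain complex of $O(\Xi^{<x})$ rigorously and invoking Proposition \ref{prop:CWstructure} to fix its homology via the sphere topology of the link; all remaining steps, in particular the identification of $d^1$ with the boundary of $C_\bullet(U; \F)$ and the degeneration of the spectral sequence, are then routine.
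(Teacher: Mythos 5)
Your argument is correct, and it takes a genuinely different (more self-contained) route than the paper. The paper's proof uses Proposition \ref{prop:CWstructure} to identify $C_\bullet(U;\F)$ and $C_\bullet(K(U);\F)$ with the relative cellular and simplicial chain complexes of the pair $(\Delta,|Q|)$, $Q=\Xi\setminus U$, and then cites the classical comparison of cellular and simplicial homology. Your filtration of $C_\bullet(K(U);\F)$ by $\dim\last(X)$ is essentially the proof hiding behind that citation, written out directly in the poset/cosheaf language: the graded pieces $L_x$ are the augmented, shifted simplicial chains of $O(\Xi^{<x})$ with constant coefficients $\F(x)$ (constant because $\iota_{Y,X}=\mathrm{Id}$ whenever $\last(Y)=\last(X)$), Proposition \ref{prop:CWstructure} identifies $O(\Xi^{<x})$ with the barycentric subdivision of $\partial C(x)\cong S^{\dim(x)-1}$, and the identification of $d^1$ with $\partial$ on $C_\bullet(U;\F)$ plus the one-row degeneration finish the job. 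What your version buys is that it visibly works for an arbitrary cosheaf $\F$, whereas the textbook statements the paper cites are formulated for constant coefficients and strictly speaking require exactly the local-acyclicity computation you perform; what it costs is a page of bookkeeping the paper avoids. Two points worth making explicit in a write-up: (i) membership in $K(U)$ constrains only $\last(X)$, so $L_x$ is the \emph{full} complex of flags ending at $x$ (initial elements may lie outside $U$), which is why the entire sphere $O(\Xi^{<x})$ appears independently of $U$; and (ii) the fact that the isomorphism is induced by $\bary$ (and not merely abstract) follows from the edge-homomorphism description of the one-row $E^2=E^\infty$ page together with your observation that $\bary$ of a cycle of $C_p(U;\F)$ is a cycle in $F_pC_p(K(U);\F)$ representing the corresponding class in $E^2_{p,0}$, which you correctly note at the end.
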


\begin{proof}
  We denote by $Q = \Xi \setminus U$ the closed complement of $U$.
	We first note that by definition the differential complexes
	$C_\bullet(U; \F)$ and $C_\bullet(K(U); \F)$ coincide with the
	relative complexes $C_\bullet(\Xi, Q; \F)$ and $C_\bullet(O(\Xi), O(Q); \F)$,
	respectively. 
	By Proposition \ref{prop:CWstructure}, $\Xi$ represents a regular CW complex with support $\Delta$,
	and $U$ and $Q$ represent an open and closed subsets of $\Delta$, respectively. 
	Moreover, $C_\bullet(\Xi, Q; \F)$ and $C_\bullet(O(\Xi), O(Q); \F)$ are just the cellular and simplicial
	complexes for computing the relative homology of $Q \subset \Delta$. It is well-known that 
	$\bary$ provides an isomorphism of homology groups, cf.\ \cite[Theorems 2.27 and 2.35]{Hat02}.
\end{proof}

\subsubsection{Cohomology and Mayer-Vietoris}\label{sec:coh}

Given a cosheaf $\F$ on $P$, its \emph{dual sheaf} is defined via 
\[
\G(x) 
= \Hom( \F(x), \FF)
\]
with induced restriction maps $\iota^* \colon \G(x) \to \G(y)$ for $x \leq y$. 
As for cosheaves, there is an induced sheaf on the order complex $O(P)$ given by 
$\G(X) = \G(\last(X))$.
Given an open set $U \subset P$, we denote by $C^\bullet(O(U); \G)$
the
simplicial cochain complex 
with coboundary map $\partial^*$.
The associated cohomology groups are denoted by $H^q(O(U); \G)$.

Given an inclusion of open sets $U' \subset U \subset P$, we
have chain maps
$C_\bullet(K(U); \F) \to C_\bullet(K(U'); \F)$ and 
$C^\bullet(O(U); \G) \to C^\bullet(O(U'); \G)$ 
given by restriction. We use the notation $\alpha|_{U'}$ for a restricted chain as usual. 
These restriction maps induce the Mayer-Vietoris exact sequences
\[
  0 \to C_\bullet(K(U \cup V); \F) \to C_\bullet(K(U); \F) \oplus C_\bullet(K(V); \F) \to C_\bullet(K(U \cap V); \F) \to 0
\]
and 
\[
  0 \to C^\bullet(O(U \cup V); \G) \to C^\bullet(O(U); \G) \oplus C^\bullet(O(V); \G) \to C^\bullet(O(U \cap V); \G) \to 0
\]
for two open subsets $U,V \subset P$. 
The exactness is a straightforward consequence of $K(U \cup V) = K(U) \cup K(V)$, 
$K(U \cap V) = K(U) \cap K(V)$ and $O(U \cup V) = O(U) \cup O(V)$, 
$O(U \cap V) = O(U) \cap O(V)$ (the second last equality uses that $U$ and $V$ are open).

\subsubsection{The cap product}\label{sec:cap}

We denote the dual sheaves of the cosheaves $\F_p^k$ on $\Xi$ by $\G^p_k = \Hom(\F_p^k, \FF)$.
Note that for $x \in \Xi$ we have the \emph{contraction maps}
\[
\langle \cdot  , \cdot \rangle_x \colon \G^p_k(x) \times \F_{p'}^k(x) \to \F_{p'-p}^k(x)
\]
which are induced by the usual contraction maps
\[
 \bw^p V^\vee \times \bw^{p'} V \to \bw^{p'-p} V
\]
for a vector space $V$. Note that these maps are compatible with the (co-)sheaf structures, that is, 
given $x \le y$, $\varphi \in \G^p_k(x)$ and $\alpha \in
\F_{p'}^k(y)$, we have
\[
  \langle \varphi , \iota \alpha \rangle_x = \iota \langle \iota^* \varphi , \alpha \rangle_y.
\]

Given a flag $X = (x_0 < \dots < x_l)$, we set
\begin{gather*}
  X_{\leq k} = (x_0 < \dots < x_{k}), \\
  X_{\geq k} = (x_{l-k} < \dots < x_{l}), \\
	X^{i} = (x_0 < \dots < \widehat{x_i} < \dots < x_{l}).
\end{gather*}

\begin{defi}
  The \emph{cap product} on an open set $U \subset \Xi$ is the map defined by
        \[
        \begin{array}{cccc}
          	\cap :&  C^q(O(U); \G^p_k) \times C_{q'}(K(U);
                \F_{p'}^k)&\longrightarrow & C_{q'-q}(K(U);
                \F_{p'-p}^k)
                \\ & (\varphi,\alpha)&\longmapsto& \displaystyle\sum_{\substack{X \in K(U) \\ l(X) = q' }} 
			\iota \; \langle \varphi(X_{\geq q})  , \alpha(X) \rangle_{\last(X)} \;\; [X_{\leq {q'-q}}]
        \end{array}.
       \]
\end{defi}
If $X = (x_0 < \dots < x_{q'})$ is such that $x_{q'-q} \notin U$, then
$\varphi$ is not defined on $X_{\geq q}$; 
 this is exactly the case when $X_{\leq {q'-q}} \notin K(U)$. We set 
$[X_{\leq {q'-q}}] = 0$ in such cases meaning that these terms are
 removed from the sum in the above definition. 
Given open subsets $U \subset V \subset \Xi$, it is obvious that $\cap$ commutes 
with the restriction maps from above, that is, $(\varphi \cap \alpha)|_U = \varphi|_U \cap \alpha|_U$. 
Moreover, with a calculation identical to the classical case,
see \cite[Page 240]{Hat02}, one verifies that
\[
  \partial(\varphi \cap \alpha) = \beta \cap \partial \alpha - (-1)^{q' - q} \; \partial^* \beta \cap \alpha. 
\] 
Hence we obtain induced maps on homology
\[
  \cap \colon H^q(O(U); \G^p_k) \times H_{q'}(K(U); \F_{p'}^k) \to H_{q'-q}(K(U); \F_{p'-p}^k).
\]

\subsubsection{Poincaré duality}\label{sec:PD}

For fixed value $0 \leq k \leq n$ and $m = n-k$, we define the \emph{$k$-th fundamental class} as the 
chain $ \fund^k \in C_m(\Xi; \F_m^k)$ as follows. 
Given $x = (F, \sigma) \in \Xi$ with $\dim(x) = m$, the vector space
$\sigma^\perp/F^\perp$ has dimension $m$. Hence
\[
  \F_m^k(x) = \bw^m \left(\sigma^\perp/F^\perp \right)\simeq \FF,
\]
and we denote by $\Vol(x)$  the generator in $\F_m(x)$.
We now define
\[
  \fund^k = \sum_{\substack{\sigma \in \Gamma \\ \dim(\sigma) = k}} \Vol((\Delta, \sigma)) \; [(\Delta, \sigma)] \;\;\; \in C_m(\Xi; \F_m^k). 
\]

\begin{lemma}
  We have $\partial \fund^k=0$. 
\end{lemma}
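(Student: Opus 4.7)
My plan is to expand $\partial\fund^k$ by enumerating the cover relations below each generator $(\Delta,\sigma)$, discard the contributions that vanish for dimension reasons, and reduce the identity to a local linear relation inside each $(k{+}1)$-simplex of $\Gamma$.

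First, I would fix a $k$-simplex $\sigma\in\Gamma$ and list the cover relations $y\lessdot(\Delta,\sigma)$ in $\Xi$. Since $\dim(\Delta,\sigma)-\dim(y)=1$ forces either $\dim G$ to drop by one or $\dim\tau$ to increase by one, there are two kinds: \emph{type A}, with $y=(G,\sigma)$ where $G$ is a facet of $\Delta$ containing $\sigma$; and \emph{type B}, with $y=(\Delta,\tau)$ where $\tau\in\Gamma$ is a $(k{+}1)$-simplex with $\sigma\subset\tau\subseteq\Delta$. For a type A relation, the target
\[
\F^k_m(G,\sigma)=\bw^m\!\left(\sigma^\perp/G^\perp\right)
\]
is the top exterior power of an $(m-1)$-dimensional space, hence $0$. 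So only type B relations contribute, and reindexing by $\tau$ gives
\[
\partial\fund^k=\sum_{\substack{\tau\in\Gamma\\ \dim\tau=k+1}}\Bigg(\sum_{\substack{\sigma\subset\tau\\ \dim\sigma=k}}\iota\bigl(\Vol((\Delta,\sigma))\bigr)\Bigg)\,[(\Delta,\tau)].
\]

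Now I would fix $\tau$ and prove that the inner sum vanishes inside $\F^k_m(\Delta,\tau)$. Unwinding the definition of $\F^k_m$, the cosheaf map $\iota_{(\Delta,\tau),(\Delta,\sigma)}$ is simply the inclusion of the summand $\bw^m(\sigma^\perp)$ into the sum $\F^k_m(\Delta,\tau)=\sum_{\sigma'\subset\tau}\bw^m((\sigma')^\perp)\subset\bw^m((\FF^n)^\vee)$, so $\iota(\Vol((\Delta,\sigma)))$ is the canonical generator of that summand. Using primitivity of $\tau$, I would pick a basis $(e_1,\dots,e_n)$ of $\Z^n$ such that the vertices of $\tau$ are $0,e_1,\dots,e_{k+1}$. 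Writing $\omega=e_{k+2}^\vee\wedge\cdots\wedge e_n^\vee$, a short computation gives that the facet $\sigma_i\subset\tau$ opposite $e_i$ (with $1\leq i\leq k+1$) has generator $e_i^\vee\wedge\omega$, while the facet $\sigma_0$ opposite $0$ has generator $(e_1^\vee+\cdots+e_{k+1}^\vee)\wedge\omega$. The total sum is then
\[
(e_1^\vee+\cdots+e_{k+1}^\vee)\wedge\omega+\sum_{i=1}^{k+1}e_i^\vee\wedge\omega=2\sum_{i=1}^{k+1}e_i^\vee\wedge\omega=0
\]
in $\FF$, which is the desired local identity.

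The argument is essentially a direct computation, so I do not foresee a real obstacle; the only subtleties are checking that the two candidate cover relations above are genuine covers in $\Xi$ (which follows from the gradedness of $\Xi$ and the fact that the required strict intermediates would need a non-integer dimension), and that the cosheaf map $\iota$ for a type B cover acts as the natural summand inclusion, both of which are immediate from unwinding definitions.
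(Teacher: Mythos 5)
Your proposal is correct and follows essentially the same route as the paper: the cover relations split into the two types you describe, type A vanishes because $\bw^m$ of the $(m-1)$-dimensional space $\sigma^\perp/G^\perp$ is zero, and the type B contributions cancel because the generators of $\bw^m((\sigma')^\perp)$ over the $k+2$ facets $\sigma'$ of a primitive $(k+1)$-simplex $\tau$ sum to twice $\sum_i e_i^\vee\wedge\omega$, hence to zero over $\FF$. Your explicit computation with the spectator factor $\omega$ is if anything slightly more complete than the paper's reduction to the standard simplex.
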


\begin{proof}
  Elements  $(F,\tau)\in C_{m-1}(\Xi;\F_m^k)$ that may contribute to 
  $\partial \fund^k$ are of two kinds.
  \begin{itemize}
  \item  $F$ is a facet of $\Delta$ and
    $\dim(\tau) = k$.
    In this case for dimensional reasons
    \[
    \F_m(F,\tau) = \bw^m \left(\tau^\perp/F^\perp \right) = 0,
    \]
    so this cell does not contribute to $\partial \fund^k$.
  \item $F=\Delta$ and $\dim(\tau) = k+1$. This case  reduces to
    the case when $k=1$ and $\Delta=\tau$ is
    the standard simplex in $\R^n$. Denoting by
  $(e_1^\vee,\cdots,e_{n}^\vee)$ the basis of $(\Z^{n})^\vee$ dual
  to the standard basis of $\Z^{n}$, the elements $\Vol(\Delta,\sigma)$ where
  $\sigma$ ranges over facets of $ \Delta_{n}$ are
  \[
  e_1^\vee,\ \cdots,\ e_{n}^\vee, \ e_1^\vee+e_2^\vee+\cdots+e_{n}^\vee.
  \]
  Hence
  \[
 \sum_{\substack{\sigma \subset \tau \\ \dim(\sigma) = k}} \Vol((\Delta, \sigma)) =0,
  \]
and this cell does not contribute to $\partial \fund^k$ neither.
  \end{itemize}
Altogether we obtain that $\partial \fund^k=0$.  
\end{proof}

We are now ready to formulate the main theorem.
By abuse of notation, we write $\fund^k$ for
$\bary(\fund^k) \in  C_m(O(\Xi); \F_m^k)$. Given
an open set $U \subset \Xi$, we write $\fund^k|_U$ for the restriction
to $C_m(K(U); \F_m^k)$. 

\begin{thm} \label{thm:PDCup}
  Let $U \subset \Xi$ be an open set. Then the cap product with the
  fundamental class 
	\begin{equation} \begin{split} 
		\cap \; \fund|_U \colon H^q(O(U); \G^p_k) &\to H_{m-q}(K(U); \F_{m-p}^k), \\
		                    \varphi &\mapsto \varphi \cap \fund^k|_U
	\end{split} \end{equation}
	is an isomorphism for all $0 \leq p,q \leq m$. 
\end{thm}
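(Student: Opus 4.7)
The plan is to prove Theorem \ref{thm:PDCup} by the classical local-to-global argument for Poincar\'e--Lefschetz duality: reducing via Mayer--Vietoris and the five lemma to the case of a basic open set of $\Xi$, where the statement can be checked by a direct local computation.

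First I would verify on the chain level that the cap product with $\fund^k|_U$ is well-defined on (co)homology and natural in $U$. The Leibniz formula
\[
\partial(\varphi \cap \alpha) = \varphi \cap \partial\alpha - (-1)^{q'-q}\,\partial^*\varphi \cap \alpha
\]
from Section \ref{sec:cap}, combined with $\partial \fund^k = 0$, guarantees that $\varphi \mapsto \varphi \cap \fund^k|_U$ sends cocycles to cycles and coboundaries to boundaries, and compatibility with the restriction maps $U' \subset U$ is immediate from the definition. Consequently, for an open decomposition $U = U_1 \cup U_2$, the cap product assembles the Mayer--Vietoris sequences of Section \ref{sec:coh} into a commutative ladder of long exact sequences; by the five lemma, the theorem for $U_1$, $U_2$, and $U_1 \cap U_2$ forces it for $U$. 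Since $\Xi$ is finite and any open $U \subset \Xi$ can be written as $U = \bigcup_{x \in U} \Star(x)$ with $\Star(x) = \{y \in \Xi : y \geq x\}$, an induction on the number of distinct minimal stars needed reduces the problem to the base case $U = \Star(x)$.

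The main work is then the base case $U = \Star((F,\sigma))$ for $(F,\sigma) \in \Xi$. The crucial observation is the product decomposition of posets
\[
\Star((F,\sigma)) \;\cong\; [F,\Delta] \;\times\; [\emptyset,\sigma]^{\text{op}},
\]
where the first factor is the upper interval $\{G \in \Fac(\Delta) : F \subset G\}$ and the second is the opposite of the face poset of the simplex $\sigma$. Geometrically, under the regular $CW$-structure of Proposition \ref{prop:CWstructure}, this corresponds to a product neighborhood of the cell $C((F,\sigma))$ in $\Delta$, the first factor capturing the normal directions to $C((F,\sigma))$ and the second the tangential ones. Correspondingly, the splitting $\tau^\perp / G^\perp = \tau^\perp / F^\perp \oplus F^\perp / G^\perp$ (for $\tau \subset \sigma \subset F \subset G$) already exploited in Lemma \ref{lem:sigmacomplex} induces a K\"unneth-type decomposition of the cosheaves $\F_{m-p}^k$ and the sheaves $\G^p_k$. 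This reduces the local duality to a product of two simpler dualities: on the normal factor $[F,\Delta]$, homology and cohomology concentrate in a single degree by the cone-acyclicity used in Corollary \ref{cor:hom aff}, while on the tangent factor $[\emptyset,\sigma]^{\text{op}}$ the local pairing reduces to the standard perfect pairing $\bw^p V^\vee \times \bw^m V \to \bw^{m-p} V$ between top and lower wedge powers of a vector space.

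The main obstacle is the careful bookkeeping required to verify that the cap product with $\fund^k$ restricted to $\Star((F,\sigma))$ really does match the K\"unneth product of these two local dualities. A subtlety is that $\fund^k$ is supported only on cells with $F = \Delta$, so the local fundamental class on $\Star((F,\sigma))$ vanishes unless a full-dimensional face of $\Delta$ is reachable from $(F,\sigma)$; tracking this is what enforces the correct behaviour along $\partial\Delta$ and ultimately turns the statement into a genuine Lefschetz-type duality. An alternative route, in the spirit of \cite{JRS-Lefschetz11Theorem}, is to phrase the local step abstractly as a perfect-pairing/acyclicity statement on the sheaf-cosheaf pair restricted to $\Star(x)$, which simplifies bookkeeping at the cost of shifting the burden to a more abstract verification.
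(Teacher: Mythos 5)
Your global strategy (naturality of the cap product on homology via the Leibniz rule and $\partial\fund^k=0$, Mayer--Vietoris ladders, five lemma, induction on the number of stars covering $U$) is exactly the paper's; note only that you should record, as the paper does, that $U(x)\cap U(y)$ is again a star $U(x\vee y)$ or empty, so that the induction closes. The cohomology side of the base case (contractibility of $O(U(x))$ via an explicit cone homotopy, giving $H^q=0$ for $q>0$ and $H^0=\G^p_k(x)$) is also needed and is compatible with what you sketch.

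The genuine gap is in your local computation on a star $U(x)$, $x=(F,\sigma)$. The K\"unneth splitting $\tau'^\perp/G^\perp=\tau'^\perp/F^\perp\oplus F^\perp/G^\perp$ does separate a normal factor over $[F,\Delta]$ (handled by the cone-acyclicity of Corollary \ref{cor:hom aff}) from a tangential factor over the faces of $\sigma$; but the tangential statement you then need is that the complex
\[
0 \to \bigoplus_{\substack{\tau\subseteq\sigma\\ \dim\tau=k}} \bw^{a}\bigl(\tau^\perp/F^\perp\bigr) \to \bigoplus_{\substack{\tau\subseteq\sigma\\ \dim\tau=k+1}} \F^k_{a}(F,\tau) \to \cdots \to \F^k_{a}(F,\sigma) \to 0
\]
is exact except at its top spot, where its homology is canonically $\Hom(\F^k_{\,\cdot}(F,\sigma),\FF)$ in the complementary degree. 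This is \emph{not} the pointwise contraction isomorphism $\bw^p V^\vee\cong\bw{}^{m-p}V$ obtained by pairing with a volume element of a single vector space; it is the local Poincar\'e duality for the cosheaves $\F^k_\bullet$ on the face poset of a unimodular simplex, equivalently for the matroidal fan $H^k\times\R^l\times\TT^s$ ($H^k$ the $k$-th stable self-intersection of the standard tropical hyperplane), and it requires a substantive argument. The paper does not reprove it: it identifies the star complex with the one from \cite{JRS-Lefschetz11Theorem} and imports \cite[Corollary 5.9]{JRS-Lefschetz11Theorem} wholesale. Your closing remark correctly identifies this route, but your primary argument, as written, replaces the key homological input by a linear-algebra fact that only describes the pairing on coefficient groups at a single cell. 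To complete your proof you must either cite the matroidal local duality as the paper does, or actually prove the exactness and top-degree identification for the tangential complex above (e.g.\ by a deletion--contraction or explicit basis argument for the uniform matroid).
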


\begin{proof}
  We start by proving the statement on open stars
  $U = U(x) = \{y \in  \Xi : y > x\}$.  
	Note that $O(U)$ is contractible in the following sense:
        Consider the map  
	$\rho \colon C^q(O(U); \G^p_k) \to C^{q-1}(O(U); \G^p_k)$ given by
	\[
        \rho\varphi (X) = 
		  \begin{cases}
			  0 & \text{ if } \first(X) = x, \\
				\varphi(x < X) & \text{ otherwise}. 
			\end{cases}
	\]
        It is easy to check that
        $\partial^* \rho + \rho \partial^* = \text{Id}$
  for $q >0$.
	For $q =0$ the sum is equal to $\text{Id} - \Phi \circ \text{pr}_x$, 
	where $\text{pr}_x \colon C^0(O(U); \G^p_k) \to \G^p_k(x)$ is
        the projection 
	and $\Phi \colon \G^p_k(x)\to C^0(O(U); \G_k^p)$ is the map 
	given by $\Phi(\alpha) (y) = \iota^*_{x,y} (\alpha)$. 
        It follows that $H^q(O(U); \G^p_k) = 0$ for $q > 0$ and
        $H^0(O(U); \G^p_k) = \G^p_k(x)$.
	On the homology side, we have
        $H_{m-q}(K(U); \F_{m-p}^k) = H_{m-q}(U; \F_{m-p}^k)$
        by Proposition \ref{CellularSimplicial}.
        
	Let $x = (F, \sigma)$ and set $s = n - \dim(F)$
        and $l = \dim(x)$ and $d = \dim(\sigma)$.
 The proof that Poincaré duality holds  is now a rewriting,
 in the case of coefficients in $\FF$,
 of the proof of Poincaré duality over $\Z$ from
 \cite[Section 5]{JRS-Lefschetz11Theorem}. For the sake of brevity,
 we use notation from \cite{JRS-Lefschetz11Theorem}.
     First note 
	that  $\F_p^k(x)$ is equal to the corresponding $\Fbold_p$
        group over $\FF$ for the matroidal fan
	\[
	  V = H^k \times \R^l \times \TT^s  \;\;\; \subset \R^{d+l} \times \TT^s
	\]
        	at $(0,0,-\infty)$,
        where $H^k$ denotes the $k$-th stable
        self-intersection of the standard tropical 
	hyperplane in $\R^d$. Hence,
        $H_{m-q}(U; \F_{m-p}^k) = H_{m-q}^\BM(V; \Fbold_{m-p})$ 
	and $\G^p_k(x) = \Hom(\Fbold_{p}, \FF)$. Our cup product is
        equal to the one from 
	\cite[Definition 4.11]{JRS-Lefschetz11Theorem}: For $q= 0$,
        given
        $\varphi \in \G^p_k(x)$, the class
	$\varphi \cap \fund^k|_U \in H_{m}(U; \F_{m-p}^k)$ is represented by the chain obtained from contracting all coefficients
	of the fundamental chain with $\varphi$. 
	Hence by \cite[Corollary 5.9]{JRS-Lefschetz11Theorem} Poincaré duality holds for stars $U = U(x)$. 
	
		Note that $U(x) \cap U(y) = U(x \vee y)$ 
		if the join $x \vee y$ exists and $U(x) \cap U(y) = \emptyset$ otherwise. 
		Hence for the general case, we can copy the proof of \cite[Theorem 5.3]{JRS-Lefschetz11Theorem}, that is, 
	we proceed by induction over the number of stars that are needed to cover
	an open set $U$. The induction step is provided by the
        Mayer-Vietoris
        commutative diagram (we omit the (co-)sheaves for simplicity):
	\[
	\begin{tikzcd}
		0 \arrow[r] 
		  & C^\bullet(O(U \cup V)) \arrow[r] \arrow[d, "\cap \; \fund^k|_{U \cup V}"]	
			& C^\bullet(O(U)) \oplus C^\bullet(O(V)) \arrow[r] \arrow[d, "\cap \;\fund^k|_{U} \;\oplus \;\cap \;\fund^k|_{V}"]
			& C^\bullet(O(U \cap V)) \arrow[r] \arrow[d] \arrow[d, "\cap \; \fund^k|_{U \cap V}"]
		& 0 \\
		0 \arrow[r] 
		  & C_\bullet(K(U \cup V)) \arrow[r] 
			& C_\bullet(K(U)) \oplus C_\bullet(K(V)) \arrow[r] 
			& C_\bullet(K(U \cap V)) \arrow[r] 
		& 0
	\end{tikzcd}.
	\]
        Since the triangulation $\Gamma$ is finite, the result is proved.
\end{proof}

\begin{proof}[Proof of Theorem \ref{prop:dual}]
  We apply Theorem \ref{thm:PDCup} to $U = \Xi$ which proves
	\[
	  H^q(O(\Xi); \G^p) \cong H_{m-q}(K(\Xi); \F_{m-p}) = H_{m-q}(O(\Xi); \F_{m-p}).
	\]
	By the universal coefficient theorem, we have
	\[
	  H^q(O(\Xi); \G^p) \cong \Hom(H_q(O(\Xi);\F_p), \FF). 
	\]
  Finally, by Proposition \ref{CellularSimplicial} we have
	\[
	  H_{m-q}(O(\Xi); \F_{m-p}) \cong H_{m-q}(\Xi; \F_{m-p}), 
		  \quad \quad \Hom(H_q(O(\Xi);\F_p), \FF) = \Hom(H_q(\Xi;\F_p), \FF),
	\]
	which proves the claim. 
\end{proof}

\subsection{Euler characteristics}\label{sec:proof hodge}

So far, we proved  that
\[
h_q(\Gamma;\F_p^k)=h^{p,q}(\Delta^k) \quad \mbox{if }
p+q \ne n-k.
\]
Hence  the proof of Theorem \ref{thm:hodge} can be now reduced to
an Euler characteristic computation, which is the content of the 
 following proposition.
\begin{prop}\label{prop:eq euler}
  For any $p\ge 0$, one has
  \[
  \chi(\F^k_p)= \sum_{q\ge 0}(-1)^q h^{p,q}(\Delta^k).
  \]
\end{prop}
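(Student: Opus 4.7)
The plan is to compute $\chi(\F_p^k) = \sum_q (-1)^q h_q(\Gamma; \F_p^k)$ directly from the chain complex, exploiting that the Euler characteristic depends only on the dimensions of the chain groups, not on the differentials. Hence
\[
\chi(\F_p^k) = \sum_{(F,\sigma) \in \Xi} (-1)^{\dim F - \dim \sigma}\, \dim \F_p^k(F, \sigma).
\]
By Lemma \ref{lem:dim Fp}, each $\dim \F_p^k(F, \sigma)$ is a universal binomial function $\gamma_{p,k}(\dim F, \dim \sigma)$ of $\dim F$, $\dim \sigma$, $p$, $k$. Collecting the sum by the face $F \subset \Delta$, one obtains
\[
\chi(\F_p^k) = \sum_F (-1)^{\dim F} \sum_{s \ge 0} (-1)^s f_s(\Gamma|_F)\, \gamma_{p,k}(\dim F, s),
\]
where $f_s(\Gamma|_F)$ counts the $s$-simplices of the triangulation induced on the face $F$.

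The next step is to show that this combinatorial expression depends only on $\Delta$, $p$, and $k$, and not on the particular unimodular triangulation $\Gamma$. I would rewrite each inner alternating sum via the standard $f$-to-$h$ polynomial transform of $\Gamma|_F$ and then invoke the Betke--McMullen--Stanley theorem, which identifies the $h$-vector of any unimodular triangulation of a lattice polytope with its Ehrhart $h^*$-vector. After this substitution, $\chi(\F_p^k)$ becomes a sum of $h^*$-invariants of the faces of $\Delta$ weighted by explicit binomials in $p,k$.

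The last step is to identify this polytope invariant with $e_{\Delta,k,p} = \sum_q (-1)^q h^{p,q}(\Delta^k)$. Following the Danilov--Khovanski\u{\i} algorithm, the numbers $e_{\Delta,k,p}$ admit an explicit closed form in terms of the $h^*$-data of $\Delta$ as derived by Di Rocco--Haase--Nill \cite{DiRHaaNil19}; a term-by-term comparison with the combinatorial expression from the previous step yields the desired identity. Alternatively, once triangulation-independence is established, one may verify the identity on a single convex unimodular triangulation of $\Delta$ (when one exists) where \cite{ArnRenSha18, RenRauSha22} apply directly, and then transport it back to our (possibly non-convex) $\Gamma$.

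The main obstacle lies in the second step: although the reduction to $h^*$-data is morally transparent via the $f$-$h$-$h^*$ correspondence, the precise binomials $\binom{\dim \sigma}{l}\binom{\dim F - \dim \sigma}{p - \dim \sigma + l}$ coming from Lemma \ref{lem:dim Fp} must be carefully unpacked face by face to cleanly extract the $h^*$-expression. This is a bookkeeping exercise rather than a deep step, but it is where all the combinatorial subtlety of the unimodularity hypothesis is used.
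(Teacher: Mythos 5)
Your proposal follows essentially the same route as the paper: compute $\chi(\F_p^k)$ from the dimensions of the chain groups via Lemma \ref{lem:dim Fp}, collect by faces $F$ of $\Delta$, use the unimodularity of $\Gamma$ to pass between the face counts $\nu_F(i)$ and Ehrhart data, and compare term by term with the Di Rocco--Haase--Nill closed form for $e_{\Delta,k,p}$ (Proposition \ref{prop:ep}). Be aware that the final \enquote{bookkeeping} comparison is in fact a genuine binomial identity (Lemma \ref{lem:identity}), which the paper proves in the appendix by a double recursion in $p$, $k$, $i$ rather than by direct manipulation.
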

We prove Proposition \ref{prop:eq euler} in
this section, up to a combinatorial lemma which is
proved in its turn in Appendix \ref{sec:comb lem}.
To avoid any ambiguity, note that
our definition of  the binomial coefficient
$a\choose b$ 
 for any integers $a$ and $b$ is 
\[
{a\choose b}=\left\{
\begin{array}{ll}
  \frac{a(a-1)(a-2)\cdots (a-b+1)}{b!}
  &\mbox{if }b\ge 0
  \\ 0 & \mbox{if }b< 0
\end{array}\right.
\]
For example
\[
 {-1\choose b}=(-1)^b\quad\forall b\ge 0,
\qquad\mbox{and}\qquad
{0\choose b}=\left\{\begin{array}{ll}
1 &\mbox{if }b=0
\\ 0 & \mbox{if }b\ne 0
\end{array}\right.\quad \forall b\in\Z.
\]

Given $k,p\ge 0$, we define 
\[
e_{\Delta,k,p}= \sum_{q\ge 0}(-1)^q h^{p,q}(\Delta^k).
  \]
  The integer $e_{\Delta,k,p}$ is the $p$-characteristic of a
non-singular complete intersection $X$ of $k$
hypersurfaces in $\mbox{Tor}_\C(\Delta)$ with Newton
polytope $\Delta$. That is to say, if
 $\chi_y(\Delta^k)$ denotes the Hirzebruch genus of such 
complete intersection, one has
\[
\chi_y(\Delta^k)=\sum_{p\ge 0} e_{\Delta,k,p} y^p.
\]

Given a face $F$ of $\Delta$, we denote by
$e_{F,k,p}^o$ the $p$-characteristic of the intersection of $X$
with the torus orbit of $X$ corresponding to the face $F$. By
additivity of the $p$-characteristic, we have
\[
e_{\Delta,k,p}= \sum_{F\mbox{ face of }\Delta}e_{F,k,p}^o.
\]

Hence thanks to the  next
proposition,  the $p$-characteristic
$e_{\Delta,k,p}$ can be expressed in terms of the combinatoric of $\Delta$.
Given a face $F$ of $\Delta$,
we denote by $\nu_F(i)$ the number of simplices of dimension $i$
of $\Gamma$ contained in $F$.
Note that  by  \cite[Theorem 9.3.25]{DeLoRamSan10},
 the integer $\nu_F(i)$ only depends on $F$ and
 not on a particular choice of unimodular subdivision $\Gamma$.

\begin{prop}\label{prop:ep}
For any face $F$ of dimension $m$ of $\Delta$,  one has
  \[
  e_{F,k,p}^o= (-1)^m\left( {m\choose p}+\sum_{i\ge 0}
   \alpha_{m,k,p,i}\times \nu_F(i)\right),
  \]
  where
  \[
  \alpha_{m,k,p,i}=\sum_{l,u\ge 1}(-1)^{u}
    {{k}\choose{l}}  {{u-1}\choose{l-1}}  {{m+l}\choose{m-p+u}}  {{u-1}\choose{i}}.
  \]
\end{prop}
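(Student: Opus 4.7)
The plan is to derive Proposition \ref{prop:ep} from the closed formula of Danilov--Khovanski\u{\i} (and its complete-intersection version made explicit by Di Rocco--Haase--Nill) for the $p$-characteristic of a non-degenerate complete intersection in a torus, and then translate the resulting lattice-point counts into counts of simplices of $\Gamma$ via unimodularity.

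First, observe that the intersection of a generic complete intersection $X$ of $k$ hypersurfaces with Newton polytope $\Delta$ with the torus orbit $\mathcal{O}_F$ is a non-degenerate complete intersection of $k$ hypersurfaces in $\mathcal{O}_F \cong (\mathbb{C}^*)^m$ with common Newton polytope $F$. By Danilov--Khovanski\u{\i}, the $p$-characteristic $e^o_{F,k,p}$ of such a variety admits a closed formula as an alternating sum involving the lattice-point counts $|dF \cap \Z^n|$ for $d = 1, 2, \dots$ weighted by binomial coefficients in $k$, $p$, and $m$. Since $\Gamma$ restricted to $F$ is a unimodular triangulation of $F$, one then uses the standard Ehrhart-style identity
\[
|dF \cap \Z^n| = \sum_{i \geq 0} \nu_F(i) \binom{d-1}{i},
\]
where each term on the right accounts for the $\binom{d-1}{i}$ lattice points in the relative interior of the $d$-dilate of a unimodular $i$-simplex (every lattice point of $dF$ lies in the relative interior of exactly one simplex of $d\Gamma|_F$).

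Substituting this identity into the Danilov--Khovanski\u{\i} formula and swapping summation orders produces an expression linear in the $\nu_F(i)$; the remaining task is a purely combinatorial identification of its constant term with $(-1)^m \binom{m}{p}$ and of its $\nu_F(i)$-coefficients with $(-1)^m \alpha_{m,k,p,i}$. Two sanity checks guide this step: for $k = 0$ all $\alpha_{m,k,p,i}$ must vanish (which is visible from the factor $\binom{k}{l}$ with $l \geq 1$), leaving just the torus contribution $(-1)^m\binom{m}{p} = [u^p](-u-1)^m$ that equals the $p$-characteristic of $(\mathbb{C}^*)^m$; and the sum over all faces of $\Delta$ must reproduce the global formula of Di Rocco--Haase--Nill for $e_{\Delta,k,p}$. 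The main obstacle is precisely this combinatorial manipulation, which reorganizes several nested binomial sums into the stated four-factor form for $\alpha_{m,k,p,i}$; this is the content of the combinatorial lemma deferred to Appendix \ref{sec:comb lem}, while the Danilov--Khovanski\u{\i} input and the Ehrhart identity for unimodular triangulations are classical.
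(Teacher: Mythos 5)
Your proposal follows essentially the same route as the paper: restrict the complete intersection to the torus orbit of $F$, apply the Danilov--Khovanski\u{\i} / Di Rocco--Haase--Nill closed formula for $e^o_{F,k,p}$ in terms of the lattice-point counts $\lambda_F(d)=\Card(dF\cap\Z^n)$, convert these into the $\nu_F(i)$ via the identity $\lambda_F(d)=\sum_{i\ge 0}\binom{d-1}{i}\nu_F(i)$ valid for unimodular triangulations, and rearrange; your two sanity checks ($k=0$ and summation over faces) are also sound. One concrete correction, however: the final rearrangement is \emph{not} the content of Lemma \ref{lem:identity} in Appendix \ref{sec:comb lem}. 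That lemma proves $\alpha_{m,k,p,i}=\beta_{m,k,p,i}$, where $\beta_{m,k,p,i}$ is the coefficient coming from the dimension count of $\F_p^k(F,\sigma)$ in Lemma \ref{lem:dim Fp}, and it is invoked only later, in the proof of Proposition \ref{prop:eq euler}, to match the cosheaf Euler characteristic with the complex one. The step you defer --- passing from the nested sum over tuples $(b_1,\dots,b_l)$ in the Di Rocco--Haase--Nill formula to the four-factor expression for $\alpha_{m,k,p,i}$ --- is carried out directly inside the proof of Proposition \ref{prop:ep}: group the tuples by their sum $\kappa$, count them by $\rho(\kappa,l)=\binom{\kappa+l-1}{l-1}$, change variables to $u=\kappa+l$ (so the sign becomes $(-1)^u$), and only then insert the Ehrhart identity and extract the coefficient of $\nu_F(i)$. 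You should supply this short computation explicitly rather than point to the appendix, which addresses a different identity.
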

\begin{proof}
  Given  $l\ge 0$, we denote by
\[
\lambda_F(l)=\Card(lF \cap \Z^n)
\]
 the number of lattice points contained in the $l$-th dilate of $F$.
By \cite[Theorem 1.6]{DiRHaaNil19}, one has
\begin{align*}
  e_{F,k,p}^o&= (-1)^m{m\choose p}+ (-1)^m \sum_{l= 1}^k(-1)^l{{k}\choose{l}}
  \left(\sum_{\substack{b_1,\cdots b_l\ge 0\\ \sum_i b_i\le p}}(-1)^{\sum_i
    b_i}
      {{m+l}\choose{p-\sum_i b_i}}\lambda_F(l+\sum_i b_i)\right)
      \\ &= (-1)^m{m\choose p}+ (-1)^m\sum_{l\ge 1, \kappa\ge 0}
      (-1)^{l+\kappa}\rho(\kappa,l) {{k}\choose{l}} {{m+l}\choose{p-\kappa}}\lambda_F(l+\kappa),
\end{align*}
where $\rho(\kappa,l)$ is the number of ordered partitions of a
non-negative integer $\kappa$ into $l$ non-negative integers. Since
\[
\rho(\kappa,l)= {{\kappa+l-1}\choose{l-1}},
\]
we obtain after the change of variable $u=\kappa+l$
\begin{align*}
   e_{F,k,p}^o&= (-1)^m{m\choose p}+ (-1)^m\sum_{l\ge 1, u\ge l}
   (-1)^{u}{{k}\choose{l}}  {{u-1}\choose{l-1}}
   {{m+l}\choose{p+l-u}}\lambda_F(u).
\end{align*}
 By  \cite[Theorem 9.3.25]{DeLoRamSan10}, we have
  \[
  \lambda_F(u)=\sum_{i\ge 0}{{u-1}\choose{i}} \nu_F(i).
  \]
  Hence we obtain
  \begin{align*}
       e_{F,k,p}^o&= (-1)^m{m\choose p}+ (-1)^m\sum_{l\ge 1, u\ge
         l,i\ge 0}
  (-1)^{u}{{k}\choose{l}}  {{u-1}\choose{l-1}}
       {{m+l}\choose{p+l-u}}{{u-1}\choose{i}} \nu_F(i).
  \end{align*}
  Since the
  $ {{u-1}\choose{l-1}}=0$ if $1\le u<l$, we may sum over $u\ge 1$ in
  the above sum. That is, the proposition is proved.
\end{proof}

\begin{proof}[Proof of Proposition \ref{prop:eq euler}]
  We have to prove that
  \[
  \chi(\F^k_p)=e_{\Delta,k,p}.
  \]
By Lemma \ref{lem:dim Fp}, we have
  \begin{align*}
    \chi(\F^k_p)&= \sum_{(F,\sigma)\in\Xi}(-1)^{\dim F+\dim \sigma}\left(
  {{\dim F}\choose{p}}-\sum_{l=0}^{k-1}{{\dim
         \sigma}\choose{l}}{{\dim F-\dim\sigma}\choose{p-\dim\sigma+l}}
    \right)
    \\ & =\sum_{F\mbox{ face of }\Delta}(-1)^{\dim F}  \sum_{i\ge 0} (-1)^{i}\left(
    {{\dim F}\choose{p}}-\sum_{l=0}^{k-1}{{i}\choose{l}}{{\dim F-i}\choose{p-i+l}}\right)\nu_F(i).
  \end{align*}
Hence  it is enough to prove that for any face $F$ of $\Delta$ of
dimension $m$, one has
   \[
  e^o_{F,k,p}=(-1)^{m}  \sum_{i\ge 0} (-1)^{i}\left(
{{m} \choose{p}}-\sum_{l=0}^{k-1}{{i}\choose{l}}{{m-i}\choose{p-i+l}}\right)\nu_F(i).
\]
Since $ \sum_{i\ge 0} (-1)^{i}\nu_F(i)=1$, we have
\begin{multline*}
 (-1)^{m}\sum_{i\ge 0} (-1)^{i}\left(
{{m}
  \choose{p}}-\sum_{l=0}^{k-1}{{i}\choose{l}}{{m-i}\choose{p-i+l}}\right)\nu_F(i) \\
	=
(-1)^{m}\left(
{{m}
  \choose{p}}-\sum_{i\ge 0} (-1)^{i}\sum_{l=0}^{k-1}{{i}\choose{l}}{{m-i}\choose{p-i+l}}\nu_F(i)
\right).
\end{multline*}
Hence by Proposition \ref{prop:ep}, we are reduced to prove that
\[
\forall m,k,p,i\ge 0,\quad
(-1)^{i+1}\sum_{l=0}^{k-1}{{i}\choose{l}}{{m-i}\choose{p-i+l}}=
\sum_{l,u\ge 1}(-1)^{u}
    {{k}\choose{l}}  {{u-1}\choose{l-1}}  {{m+l}\choose{m-p+u}}  {{u-1}\choose{i}},
    \]
    which is done in Lemma \ref{lem:identity} below.
\end{proof}

\section{Real phase structures and $T$-manifolds}\label{sec:realstructures}

\subsection{Real phase structures}
Recall that, for a face $\sigma$ of $\Gamma$, we use the notation
\[
  \sigma^\vee = (\FF^n)^\vee / \sigma^\perp = T_{\FF}(\sigma)^\vee,
  \]
  and that $\pi_{\sigma} \colon (\FF^n)^\vee \to \sigma^\vee$ and
  $\pi_{\sigma, \tau} \colon \tau^\vee \to \sigma^\vee$
  denote the canonical projection map, the latter being defined only
  when 
  $\sigma \subset \tau$.
The translation of the real phase structures studied in \cite{RenRauSha21}
to our polytope setting is as follows.

\begin{definition} \label{def:real}
  A \emph{real phase structure} $\EEE$ on the $k$-skeleton of  $\Gamma$ consists of a choice of a point
	\[
	  \EEE(\sigma) \in \sigma^\vee
	\]
	for every $\sigma \in \Gamma$ of dimension $k$ such that, for every $\tau \in \Gamma$ of dimension $k+1$ and  $s \in \tau^\vee$,
	the cardinal $n_\EEE(\tau, s)$  of the set 
	\[
	  \{\sigma \lessdot \tau : \pi_{ \sigma,\tau}(s) = \EEE(\sigma)\}
	\]
	is even.
\end{definition}

\begin{remark}
  The parity condition can be restated as follows:
  for any $\sigma \lessdot \tau$, the preimage
	$\pi^{-1}_{\sigma,\tau}(\EEE(\sigma))$ is an $\FF$-affine line in $\tau^\vee$. 
	The condition states that the union of these lines covers any $s \in \tau^\vee$ an even number of times 
	(possibly $0$ times).
  It is shown in \cite[Lemma 2.8]{RenRauSha21} that $n_\EEE(\tau, s)$
  takes only the values $0$ and $2$, and that
	 the union of lines form a so-called necklace of lines. 
\end{remark}

\begin{example} \label{ex:rps}
  Let us have a look at the special cases $k= 0, 1,$ and $ n$. 
  \begin{enumerate}
		\item 
		When $k=0$, the space $\sigma^\vee = (\FF^n)^\vee /
                \sigma^\perp = \{0\}$ is trivial for any vertex
                $\sigma$ of $\Gamma$. Then the unique choice
		$\EEE_0 (\sigma)= 0$ defines a real phase structure on the
                $0$-skeleton of $\Gamma$
                since
                $n_{\EEE_0}(\tau, s) = 2$ for any edge $\tau$ of
                $\Gamma$ and any  $s \in \tau^\vee$.

  \item
    In the other extreme case $k= n$, the
                         condition from Definition \ref{def:real} is
                         empty. In particular any
                         choice of 
 a point $\EEE(\sigma) \in (\FF^n)^\vee$
 for every full-dimensional $\sigma$ defines a
 real phase structure on the
                $n$-skeleton of $\Gamma$.

		\item 
		When $k=1$, a real phase structure can be
                alternatively described by a sign distribution 
		  $\epsilon \colon \Delta \cap \Z^n \to \FF$
		up to flipping all signs, that is, replacing $\epsilon$ by $\epsilon+1$. 
		The relation is as follows: given an edge $\sigma \in
                \Gamma$ with end points  
		$v$ and $w$, we identify $\sigma^\vee $ with $ \FF$
                and require that
		\[
		  \epsilon(v) + \epsilon(w) + \EEE(\sigma) = 1. 
		\]
		In laymen terms: if $\EEE(\sigma) = 0$, we require $v$ and $w$ to have different signs, and conversely
		if $\EEE(\sigma) \neq 0$, then $v$ and $w$ are asked to have the same sign. 
		The condition that $n_\EEE(\tau, 0)$ is even for any triangle $\tau$ ensures that
		the above equation is consistent when going around the three edges in $\tau$,
		and hence such $\epsilon$ is uniquely defined after fixing some initial sign. 
		Vice versa, given a sign distribution the above
                equation defines a real phase structure  on edges of
                $\Gamma$.            

                Note that 
                up to coordinate change,  the union of the three lines
                $\pi^{-1}_{\sigma,\tau}(\EEE(\sigma))$ for $\sigma\lessdot\tau$     in
                $\tau^\vee$
		is one of the two arrangments depicted in Figure \ref{necklaces}.
		The case on the right hand side occurs exactly when
                all vertices of $\tau$ have equal sign, 
		the left hand side picture occuring when  one vertex
                of $\tau$ has different sign than the other two. 
		\begin{figure}[h]
		  \input{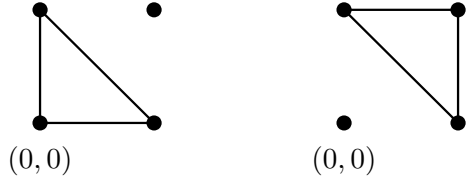}
			\label{necklaces}
			\caption{\centering The two types of necklaces
                          for real phase structures on edges.} 
		\end{figure}
	\end{enumerate}
\end{example}

\begin{exa}\label{ex:patch dte} 
As an example of a real phase structure for values $1< k< n$, we consider
$\Delta=\Gamma=\Delta_3$ the standard simplex in $\R^3$. Denote by
$\sigma_x$, $\sigma_y$, $\sigma_z$, and
$\sigma_u$
the 2-dimensional face of $\Delta_3$ contained in the
 hyperplane with equation $x=0$, $y=0$, $z=0$, and $x+y+z=1$,
respectively. 
Identifying $\FF^3$ and $(\FF^3)^\vee$ thanks to the canonical basis
of $\R^3$, we have 
\[
\sigma_x^\vee=\FF^3/\FF(1,0,0),\quad
\sigma_y^\vee=\FF^3/\FF( 0,1,0),\quad
\sigma_z^\vee=\FF^3/ \FF(0,0,1),\quad
\sigma_u^\vee=\FF^3/\FF(1,1,1).
\]
 Hence an example of a real phase structure on the 2-skeleton of
 $\Delta_3$ is given by
 \[
 \EE(\sigma_x)=\pi_{x}(0,0,0)=\pi_{x}(1,0,0),\quad
  \EE(\sigma_y)=\pi_{y}(1,0,0)=\pi_{y}(1,1,0),
 \]
  \[
 \EE(\sigma_z)=\pi_{z}(0,0,0)=\pi_{z}(0,0,1),\quad
  \EE(\sigma_u)=\pi_{u}(1,1,0)=\pi_{u}(0,0,1).
 \]
where $\pi_a$ is a shorthand for the projection $\pi_{\Delta_3,\sigma_a}$.
\end{exa}

\subsection{$T$-manifolds}
Here we define the $T$-manifold  $X_{\Gamma, \EEE}$ associated to a
real phase structure $\EEE$ on the $k$-skeleton of the barycentric
subdivision $\Gamma$. We start by describing the case $k=0$, for which
there is a unique real phase structure $\EEE_0$ by Example
\ref{ex:rps}$(1)$, and which contains all $T$-manifolds with a given
non-singular polytope $\Delta$.

To construct the $T$-manifold  $X_{\Gamma, \EEE_0}$, we glue
$2^n$ disjoint
copies of $\Delta$, labelled by $s \in (\FF^n)^\vee$ and denoted $\Delta(s)$,
by identifying faces $F \subset \Delta(s)$ and
$F \subset  \Delta(t)$ if and only if 
$s+t \in F^\perp$.
This is a classical construction in toric geometry, and
$X_{\Gamma,  \EEE_0}$ is homeomorphic to the real part
$\mbox{Tor}_\R(\Delta)$ of $\mbox{Tor}_\C(\Delta)$ by \cite[Theorem 5.4]{GKZ}.
The $CW$-structure on $\Delta$ given by the cells $C(F, \sigma)$ from
 Section \ref{sec:geometric} induces
 a $CW$-structure on $X_{\Gamma, \EEE_0}$. Given
 $s \in (\FF^n)^\vee$,
        we denote by 
	$C(F, \sigma, s)$ the copy of $C(F, \sigma)$
        in $X_{\Gamma,  \EEE_0}$ which is the image of the copy of $C(F,\sigma)$
        in $\Delta(s)$.
	Note that $C(G, \tau, t) \subset C(F, \sigma, s)$ if and only if
	$(G,\tau) \le (F, \sigma)$ and $s+t\in G^\perp$. In particular
		$C(F, \sigma, s)=C(F, \sigma, t)$ 
        if and only if $s+t \in F^\perp$, that is to say
        $C(F, \sigma, s)$ only depends on 
the class of $s$ in $F^\vee=(\FF^n)^\vee/F^\perp$.

\begin{definition} \label{def:Tamnifold}
  Let $\EEE$ be a real phase structure on the $k$-skeleton of
  $\Gamma$.
  For $(F,\sigma) \in \Xi$, we set
	\[
	\EEE(F, \sigma) = \{ s \in F^\vee
        : \pi_\tau(s) = \EEE(\tau) \text{ for some } \tau \subset
        \sigma\mbox{ with }\dim(\tau) = k\},
	\]
	and 
	\[
	  \Xi(\EEE) = \left\{ (F, \sigma, s) : (F,\sigma) \in \Xi, \ s \in \EEE(F,\sigma)\right\}.
	\]
	The \emph{$T$-manifold} $X_{\Gamma, \EEE}$ is the space 
	\[
	  X_{\Gamma, \EEE} = \bigcup_{(F, \sigma, s) \in \Xi(\EEE)}
          C(F, \sigma, s) \quad \subset X_{\Gamma, \EEE_0}.
	\]
\end{definition}

\begin{remark} \label{rem:SimplicialTManifold}
  Given that the $CW$-structure on $\Delta$ is constructed as a coarsening of 
	the barycentric subdivision of $\Gamma$, the latter 
	induces in its turn a simplical structure on $X_{\Gamma, \EEE}$ refining
        the given $CW$-structure.  
	The simplices of this subdivision are labelled by elements 
	\[
	  (\sigma_1  \subsetneq \dots   \subsetneq\sigma_{l}, s)
	\]
	where $s \in F_{\sigma_l}^\vee$
        is such that there exists 
        	$\sigma \subset \sigma_1$ with
	$\dim(\sigma) = k$ and $\pi_\sigma(s) = \EEE(\sigma)$. (Recall
        that $F_\sigma$ denotes the minimal face of $\Delta$
        containing $\sigma$.)
\end{remark}

Let us describe the $T$-manifolds associated to the preceding examples
of real phase structures. 
\begin{example} 
When $k=0$ the construction from Definition \ref{def:Tamnifold}
recovers the space $X_{\Gamma, \EEE_0}$, hence the notation is consistent. 
\end{example}

\begin{example}
  When $k=n$, each full-dimensional simplex $\sigma$ of $\Gamma$
  contributes a single point 
  \[
  C(\Delta, \sigma, \EEE(\sigma))\in \Delta(\EEE(\sigma)) \subset X_{\Gamma, \EEE}.
  \]
			In particular $X_{\Gamma, \EEE}$ is a union of $\Vol(\Delta)$ points, where $\Vol(\Delta)$
			denotes the lattice volume of $\Delta$. 

\end{example}

\begin{example}
  The $T$-curve in $\RP^2$ corresponding to the patchworking
   from Figure \ref{fig:patch} is depicted in Figure
  \ref{fig:patch2}. 
\begin{figure}[h]
\centering
\begin{tabular}{c}
  \includegraphics[height=6cm, angle=0]{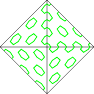}
\end{tabular}
\caption{The $T$-sextic from the patchworking of Figure \ref{fig:patch}.}\label{fig:patch2}
\end{figure}
More generally when $k=1$
it is well known,  see for example
                        \cite[Section 2]{IteVir06}, that
the $T$-hypersurface $X_{\Gamma, \varepsilon}$ constructed in Section
\ref{sec:patch1} is  ambient isotopic in $X_{\Gamma, \EEE_0}$ 
			to the $T$-hypersurface $X_{\Gamma, \EEE}$
                        from Definition \ref{def:Tamnifold}, via an
                        isotopy which 
                        fixes the simplices of $\Gamma$. 
 Given an edge $\sigma$ with endpoints
 $v$ and $w$, and $s \in (\FF^n)^\vee$,
it follows from Example \ref{ex:rps}
 that $\pi_\sigma(s) = \EEE(\sigma)$  
 if and only if $v$ and $w$ have different signs
                        in the extended sign distribution with respect
                        to the orthant $s$.  
			It is
                        hence sufficient to compare the two constructions
                        for a single orthant, say for $s=0$.  
			Using the simplicial descriptions from Remark
                        \ref{rem:SimplicialTManifold}, we note that 
			$X_{\Gamma,\EEE} \cap \Delta(0) \subset
                        \Delta$ is the simplicial complex consisting 
			of the simplices in the barycentric subdivision of $\Gamma$ labelled
			by flags
			\[
			  \sigma_1 \subsetneq \dots  \subsetneq \sigma_l
			\]
			subject to the conditions
                        that $\dim(\sigma_1)\ge 1$ and
                        $0 \in \EEE(\Delta, \sigma_1)$. Let us restrict 
			our attention to the cells contained in a fixed
                        simplex $\sigma \in \Gamma$,
			that is we require $\sigma_k \subset
                        \sigma$. It is now straightforward to check  
			that the union of such cells is ambient
                        isotopic in $\sigma$ to the convex hull of 
			the midpoints of edges in $\sigma$ with
                        different signs.
\end{example}
\begin{example}
 The $T$-line in $\RP^3$ corresponding to the real phase structure from Example
 \ref{ex:patch dte} is depicted in Figure \ref{fig:patch dte}. 
\end{example}

For general $k$, it is clear from the definition that $X_{\Gamma, \EEE}$
is a pure-dimensional simplicial complex of dimension $n-k$.

\begin{proposition} 
  The $T$-manifold $X_{\Gamma, \EEE}$ is $PL$-smooth.
\end{proposition}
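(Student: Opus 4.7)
The goal is to show that every point of $X_{\Gamma, \EEE}$ has a neighborhood PL-homeomorphic to an open ball of dimension $n-k$. I would work with the simplicial structure on $X_{\Gamma, \EEE}$ inherited from the barycentric subdivision as in Remark \ref{rem:SimplicialTManifold}, and verify that the link of each simplex is a PL-sphere (or PL-ball for simplices meeting $\partial X_{\Gamma, \EEE}$) of the appropriate dimension. Equivalently, one can check vertex links, and rely on standard results that verify the PL-manifold property from vertex links in a simplicial complex.

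My first step is a localization. Any point $p \in X_{\Gamma, \EEE}$ lies in the relative interior of some cell $C(F, \sigma, s)$, and its local neighborhood is determined by the star of $\sigma$ in $\Gamma|_F$ together with the values of $\EEE$ on the $k$-dimensional faces in this star. Unimodularity of $\Gamma$ (and the non-singularity of $F$) lets me pass to standard lattice coordinates in which the star of $\sigma$ becomes a standard unimodular fan, so the local model acquires a combinatorial description independent of the ambient polytope.

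The geometric heart of the argument is the codimension-one situation. The codimension-one simplices of $X_{\Gamma, \EEE}$ are parametrized by pairs $(\tau, s)$ with $\tau \in \Gamma$ of dimension $k+1$ and $s$ a suitable class; the top-dimensional cells meeting such a simplex correspond to $k$-dimensional faces $\sigma \lessdot \tau$ with $\pi_{\sigma,\tau}(s) = \EEE(\sigma)$. The parity condition in Definition \ref{def:real} says that the number of such $\sigma$ is even, and in fact by \cite[Lemma 2.8]{RenRauSha21} equal to $0$ or $2$. The value $2$ is exactly what's needed to say that locally, two top-dimensional cells meet along this codimension-one simplex; the value $0$ corresponds to interior simplices of $X_{\Gamma, \EEE_0} \setminus X_{\Gamma, \EEE}$ and does not contribute. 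Combined with the analogous pairing across orthants, this gives the pseudomanifold property.

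For higher codimensions, I would complete the argument by induction on $n-k$ using the cone structure supplied by the barycentric refinement: the link of a vertex in $X_{\Gamma, \EEE}$ is itself, up to the appropriate identifications, a $T$-manifold associated to a lower-dimensional patchworking datum coming from the star fan of the corresponding simplex of $\Gamma$, to which the inductive hypothesis applies. Alternatively, one can appeal to the matroidal description of the local model, as in the tropical setting treated in \cite{RenRauSha22}, since the local combinatorics are the same whether or not $\Gamma$ is convex. The main obstacle I anticipate is organizing the bookkeeping between the three simplicial/cellular structures in play (the triangulation $\Gamma$, the cell decomposition $C(F,\sigma,s)$ of $X_{\Gamma,\EEE_0}$, and the barycentric refinement), and carefully verifying that each inductive link again carries a real phase structure in the sense of Definition \ref{def:real} so that the induction hypothesis can be invoked.
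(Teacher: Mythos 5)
Your overall strategy --- pass to the simplicial structure of Remark \ref{rem:SimplicialTManifold}, localize at a vertex, and reduce to a standard local model via unimodularity --- is the same as the paper's, but the inductive step you propose does not close as stated, for two reasons. First, the link of a vertex $(\sigma,s)$ is not itself a lower-dimensional $T$-manifold in the sense of Definition \ref{def:Tamnifold}: it decomposes as the \emph{join} of two complexes, one recording the refinements of the flag ``before'' $\sigma$ (governed by which faces of $\sigma$ contain a $k$-face $\tau$ with $\pi_\tau(s)=\EEE(\tau)$, i.e.\ by the real phase structure on the normal fan of the simplex $\sigma$) and one recording the refinements ``after'' $\sigma$ (governed by the star fan of $\Gamma$ at $\sigma$, reflected across the orthants transverse to $F_\sigma$). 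These two halves need different arguments: the paper treats the first by invoking \cite[Proposition 2.21]{RenRauSha21} in its dual (fan) version, and the second by observing that the reflected star fan is a complete fan, whose barycentric link is a $PL$-sphere. Second, and more seriously, even if the link were a lower-dimensional $T$-manifold, the inductive hypothesis would only tell you that the link is $PL$-smooth, i.e.\ a $PL$-\emph{manifold}; to conclude that $X_{\Gamma,\EEE}$ is $PL$-smooth you need the vertex links to be $PL$-\emph{spheres}, which is strictly stronger and is not delivered by the induction as formulated.

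Relatedly, your codimension-one parity count only yields the pseudomanifold property, which you acknowledge, but it does not feed into the rest of the argument in the way you suggest: the parity/necklace condition of Definition \ref{def:real} is consumed inside \cite[Proposition 2.21]{RenRauSha21}, essentially as the base case of the sphere recognition for the ``before'' half of the link. Your fallback --- appealing to the matroidal local model as in \cite{RenRauSha21,RenRauSha22}, noting that the local combinatorics do not depend on convexity of $\Gamma$ --- is exactly what the paper does for that half; but you would still need the separate completeness argument for the ``after'' (toric boundary) half, which your sketch does not address.
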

\begin{proof}
  We show that the links of all
vertices
  of $X_{\Gamma, \EEE}$
	with respect to the simplicial structure from Remark \ref{rem:SimplicialTManifold}
	are $PL$-spheres, see \cite[Section 2.21]{RS-IntroductionPiecewiseLinear}.
        Let $\sigma$ be a simplex of $\Gamma$ and $s\in F_\sigma^\vee$
        be such
      that  there exists 
	$\tau \subset \sigma$ with 
	$\dim(\tau) = k$ and $\pi_{\tau}(s) = \EEE(\tau)$.
      Then the link of  $X_{\Gamma, \EEE}$ at $(\sigma,s)$
      is the simplicial complex given by
      completions
      \[
      (\sigma_1 \subsetneq \dots  \subsetneq \sigma_\lambda = \sigma 
      \subsetneq  \dots  \subsetneq  \sigma_l, s')
      \]
	with $s = \pi_{\sigma}(s')$. 
	By standard arguments, this complex is the join of the complexes 
	corresponding to completions "before" and "after" $\sigma$,
        respectively, 
	hence it is sufficient to prove that those latter are $PL$-spheres. 
	
	The complex of completions "before" $\sigma$ is labelled by flags
	$\sigma_1 \subsetneq \cdots \subsetneq \sigma_\lambda = \sigma$
	such that there exists 
	$\tau \subset \sigma_1$ with 
	$\dim(\tau) = k$ and $\pi_{\tau}(s) = \EEE(\tau)$.
	Then the statement is proven in \cite[Proposition 2.21]{RenRauSha21}
	in its dual version, that is, for (subfans of) the normal fan
	of $\sigma$ in the real vector space dual to the linear span of $F_{\sigma}$.
	
	The complex of completions "after" $\sigma$ is labelled by elements
	$(\sigma \subsetneq \sigma_1 \subsetneq \dots  \subsetneq \sigma_l, s')$ such that $s = \pi_{\sigma}(s')$. 
	After a coordinate change we can assume that 
	the inner  cone of $\Delta$ at $\sigma$, which is also the 
	support of the star fan of $\Gamma$ at $\sigma$, is
	equal to $\R^{n_1} \times \R_{\geq 0}^{n_2}$, 
	with $n_1 = \dim F_{\sigma} - \dim \sigma$ and $n_2 = n - \dim F_{\sigma}$.
	The $2^{n_2}$ choices for $s'$ correspond to the 
	$2^{n_2}$ orthants of $\R^{n_2}$
	and gluing these copies 
	produces the fan in $\R^{n_1 + n_2}$ 
	obtained by reflecting the fan in $\R^{n_1} \times \R_{\geq 0}^{n_2}$
	to the other orthants. 
	Hence   the complex in question is the link of the
  barycentric subdivision 
	of this complete fan, and is hence a $PL$-sphere. 
	\end{proof}

\subsection{The sign cosheaf}

We now define the analogue in our setting of the sign cosheaf for tropical varieties with real phase structures
defined in \cite{RenSha18} and \cite{RenRauSha22}. 
Note that given $(G, \tau) \leq (F,\sigma)$,  the image of $\EEE(F,\sigma)$ under the canonical projection $\pi_{G,F}:F^\vee \to G^\vee$
is contained in $\EEE(G,\tau)$.

\begin{definition}
  Let $\EEE$ be a real phase structure on the $k$-skeleton of $\Gamma$. The \emph{sign cosheaf} on $\Xi$ 
	is given by 
	\[
	  \SSS(F,\sigma) = \FF^{\EEE(F,\sigma)},
	\]
	with the cosheaf map for $(G, \tau) \leq (F,\sigma)$ that sends 
	the basis element $e_s$, with $s \in \EEE(F,\sigma)$, to the
        basis element $e_{\pi_{G,F}(s)}$. 
\end{definition}

Following \cite{RenSha18,RenRauSha22}, we explain in the next two
propositions how   
homology groups of $\SSS$ are related to both  homology groups of
$X_{\Gamma,\EE}$ and  homology groups of
$\F_{p}^k$.

\begin{proposition} \label{prop:signcosheafhomology}
  For every integer $q$, the  groups $H_q(\Xi; \SSS)$ and
  $H_q(X_{\Gamma,\EEE}; \FF)$ are isomorphic.
\end{proposition}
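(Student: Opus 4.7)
The plan is to realize $X_{\Gamma, \EEE}$ as a regular CW subcomplex of $X_{\Gamma, \EEE_0}$ whose cellular chain complex coincides on the nose with $C_\bullet(\Xi; \SSS)$.

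First, as described in the paragraph preceding Definition \ref{def:Tamnifold}, the regular CW structure on $\Delta$ from Proposition \ref{prop:CWstructure} lifts to a regular CW structure on $X_{\Gamma, \EEE_0}$ whose cells are the sets $C(F, \sigma, s)$ for $(F, \sigma) \in \Xi$ and $s \in F^\vee$, with incidence $C(G, \tau, t) \subset C(F, \sigma, s)$ if and only if $(G, \tau) \leq (F, \sigma)$ and $t = \pi_{G, F}(s)$. In particular, the codimension-one faces of $C(F, \sigma, s)$ are precisely the cells $C(G, \tau, \pi_{G, F}(s))$ for $(G, \tau) \lessdot (F, \sigma)$. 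Using the compatibility $\pi_{G, F}(\EEE(F, \sigma)) \subseteq \EEE(G, \tau)$ recorded just before the definition of $\SSS$, the collection $\{ C(F, \sigma, s) : s \in \EEE(F, \sigma) \}$ is closed under taking faces, so $X_{\Gamma, \EEE}$ inherits the structure of a regular CW subcomplex of $X_{\Gamma, \EEE_0}$.

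Next I would compare the two chain complexes. In each degree $q$, both $C_q(\Xi; \SSS)$ and the cellular chain group of $X_{\Gamma, \EEE}$ are canonically identified with
\[
  \bigoplus_{\substack{(F,\sigma) \in \Xi \\ \dim F - \dim \sigma = q}} \FF^{\EEE(F, \sigma)},
\]
the basis vector $e_s \in \SSS(F, \sigma)$ corresponding to the cell $C(F, \sigma, s)$. Under this identification the cellular boundary of $C(F, \sigma, s)$, taken over $\FF$, is the sum of its codimension-one faces, which by the incidence description above equals
\[
  \sum_{(G, \tau) \lessdot (F, \sigma)} C(G, \tau, \pi_{G, F}(s)).
\]
On the cosheaf side, $\partial e_s = \sum_{(G, \tau) \lessdot (F, \sigma)} e_{\pi_{G, F}(s)}$ by the definition of the restriction maps of $\SSS$. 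Hence the differentials agree, the two chain complexes are literally equal, and their homologies coincide, which proves the proposition.

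The only genuine work lies in the first step, namely verifying that the cells $C(F, \sigma, s)$ with $s \in \EEE(F, \sigma)$ form a regular CW subcomplex of $X_{\Gamma, \EEE_0}$. This reduces to the face-compatibility of $\EEE(F, \sigma)$, which is already noted in the text; the assertion that $X_{\Gamma, \EEE_0}$ itself carries the claimed regular CW structure is an immediate consequence of Proposition \ref{prop:CWstructure} applied to each copy $\Delta(s)$ together with the gluing rule $s + t \in F^\perp$ on faces. If one prefers to avoid discussing CW regularity of $X_{\Gamma, \EEE_0}$ directly, one can instead use the simplicial refinement from Remark \ref{rem:SimplicialTManifold} and a barycentric chain map $\bary$ exactly as in Proposition \ref{CellularSimplicial}; but the direct cellular matching above is particularly transparent over $\FF$, where no sign or orientation compatibility intervenes.
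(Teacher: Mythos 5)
Your proof is correct and follows essentially the same route as the paper: the paper likewise identifies the cells of $X_{\Gamma,\EEE}$ with the triples $(F,\sigma,s)\in\Xi(\EEE)$, matches $C_\bullet(\Xi(\EEE);\FF)$ with $C_\bullet(\Xi;\SSS)$ term by term, and checks that the differentials agree because the only face of $(F,\sigma,s)$ of the form $(G,\tau,t)$ is $(G,\tau,\pi_{G,F}(s))$. Your write-up is simply a more explicit version of the same argument, spelling out the regular CW-subcomplex structure that the paper leaves implicit.
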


\begin{proof}
  We have a canonical identification between the chain complex of
  $\FF$ on the poset $\Xi(\EEE)$ and the chain complex of $\SSS$ on $\Xi$:
	\[
	  C_\bullet(\Xi(\EEE); \FF) = \bigoplus_{(F, \sigma, s) \in \Xi(\EEE)} \FF = \bigoplus_{(F, \sigma) \in \Xi} \FF^{\EEE(F, \sigma)} =  C_\bullet(\Xi; \SSS).
	\]
        To check that the two differentials agree, one simply has to observe
        that given $(G, \tau) \leq (F,\sigma)$, the only element 
        of the form $(G, \tau, t)$ in the boundary of
        the cell  $(F, \sigma,s)$
is $(G, \tau, \pi_{G,F}(s))$. Hence the two  differential complexes
are canonically isomorphic, and we have the group isomorphisms
	\[
	  H_q(X_{\Gamma, \EEE}; \FF) \cong H_q(\Xi(\EEE); \FF) \cong H_q(\Xi; \SSS)
	\]
 as announced. 
 \end{proof}

\begin{proposition}[\cite{RenSha18,RenRauSha22}] \label{prop:filtration}
  Let $\EEE$ be a real phase structure on the $k$-skeleton of  $\Gamma$.
  There exists a filtration of cosheaves on $\Xi$
	\[
	  0 = \SSS_{n+1} \subset \SSS_n \subset \dots \subset \SSS_0 = \SSS
	\]
	such that 
	\[
	  \SSS_p / \SSS_{p+1} \cong \F_p^k\qquad \forall 0 \leq p \leq n.
	\]
\end{proposition}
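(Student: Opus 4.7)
The plan is to define the filtration explicitly from the decomposition of $\EEE(F,\sigma)$ as a union of affine subspaces of $F^\vee$, and then to identify each graded piece with $\F_p^k$ via a canonical map built from volume forms on tangent spaces. The construction follows \cite{RenSha18, RenRauSha22} closely; the main point is to check that no convexity of $\Gamma$ intervenes.

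For each $(F,\sigma) \in \Xi$ and each $k$-face $\tau$ of $\sigma$, the preimage $A_\tau := \pi_{\tau,F}^{-1}(\EEE(\tau)) \subset F^\vee$ is an affine subspace with direction $\tau^\perp/F^\perp$, and by construction $\EEE(F,\sigma) = \bigcup_\tau A_\tau$. For any affine subspace $L \subset F^\vee$ contained in some $A_\tau$, set $e_L = \sum_{s \in L} e_s \in \SSS(F,\sigma)$. I define
\[
\SSS_p(F,\sigma) = \mathrm{span}_\FF \bigl\{ e_L : L \subset A_\tau \mbox{ for some } \tau \subset \sigma \mbox{ with } \dim(\tau)=k, \; \dim(L) \geq p \bigr\}.
\]
Singletons are allowed for $p=0$, giving $\SSS_0 = \SSS$, while $\SSS_p = 0$ for $p > n-k$, recovering $\SSS_{n+1} = 0$. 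That $\SSS_p$ is a subcosheaf follows from a direct check: given $(G,\tau') \leq (F,\sigma)$, the restriction $\iota \colon \SSS(F,\sigma) \to \SSS(G,\tau')$ sends $e_L$ to $e_{\pi_{G,F}(L)}$ when $\pi_{G,F}$ is injective on $L$, and to $0$ otherwise (the fibers being then nontrivial $\FF$-affine subspaces, hence of even cardinality). In both cases the image lies in $\SSS_p(G,\tau')$, since $\tau \subset \sigma \subset \tau'$ implies that $\pi_{G,F}$ sends $A_\tau \subset F^\vee$ into $\pi_{\tau,G}^{-1}(\EEE(\tau)) \subset G^\vee$, and dimension is preserved in the nonzero case.

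To identify the graded pieces, I propose the map $\Phi \colon \SSS_p/\SSS_{p+1} \to \F_p^k$ sending $[e_L]$, for $L$ a $p$-dimensional affine subspace of some $A_\tau$, to the canonical volume form on the tangent space $W_L \subset \tau^\perp/F^\perp$, viewed as an element of $\bw^p(\tau^\perp/F^\perp) \subset \bw^p F^\vee$. Over $\FF$, any $p$-dimensional subspace carries a canonical nonzero volume form, since the $GL_p(\FF)$-determinant is trivial. The map is well-defined modulo $\SSS_{p+1}$: two parallel $p$-dimensional subspaces $L_1, L_2 \subset A_\tau$ satisfy $e_{L_1}+e_{L_2} = e_{L_1 \cup L_2}$, which lies in $\SSS_{p+1}$ as $L_1 \cup L_2$ is a $(p+1)$-dimensional affine subspace; and any $L$ with $\dim L > p$ trivially has $[e_L] = 0$. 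Surjectivity is immediate.

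The principal obstacle is the injectivity of $\Phi$, which amounts to matching the relations among the $[e_L]$ in $\SSS_p/\SSS_{p+1}$ with those defining the sum $\sum_\tau \bw^p(\tau^\perp/F^\perp) \subset \bw^p F^\vee$. For a single $A_\tau$, upon choosing a basepoint one identifies the restriction of $\SSS_\bullet$ to $\FF^{A_\tau}$ with the filtration by powers of the augmentation ideal of the group algebra $\FF A_\tau \cong \FF[x_1, \dots, x_d]/(x_i^2)$, and a direct computation shows that $[e_L] \in I^p/I^{p+1}$ represents exactly the canonical generator of $\bw^p W_L$. The interaction between different $\tau$'s is governed by intersections: when $A_\tau \cap A_{\tau'}$ is non-empty, it is an affine subspace with tangent space $(\tau^\perp \cap \tau'^\perp)/F^\perp$, and this ensures that the overlap in $\SSS_p/\SSS_{p+1}$ matches the overlap of $\bw^p(\tau^\perp/F^\perp)$ and $\bw^p(\tau'^\perp/F^\perp)$ inside $\bw^p F^\vee$. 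Crucially, this whole analysis is purely local to the cell pair $(F,\sigma)$ and uses only the data of the triangulation and its real phase structure, so no convexity of $\Gamma$ is needed.
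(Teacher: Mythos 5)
Your filtration is exactly the one the paper invokes (Definition 4.5/Lemma 4.8 of \cite{RenSha18} and Definition 2.27/Proposition 2.29 of \cite{RenRauSha22}); the paper's own proof consists solely of the observation that those arguments carry over verbatim to non-convex $\Gamma$. Your verification of $\SSS_0=\SSS$, $\SSS_{n+1}=0$, the subcosheaf property, surjectivity of $\Phi$, and the single-$A_\tau$ analysis via the augmentation ideal are all correct.

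However, the injectivity of $\Phi$ --- which is the entire content of the lemma --- has a genuine gap, visible in two related ways. First, the relations among the classes $[e_L]$ in $\SSS_p/\SSS_{p+1}$ are \emph{not} governed by pairwise intersections $A_\tau\cap A_{\tau'}$: there are higher-order relations involving three or more $\tau$'s that do not factor through overlaps of the $\bw^p(\tau^\perp/F^\perp)$. Second, and more tellingly, your argument never uses the parity condition from Definition \ref{def:real}, yet the statement is false without it. Take $n=2$, $\Gamma=\Delta=\Delta_2$, $k=1$, and an arbitrary assignment $\EEE(\tau_i)\in\tau_i^\vee$ on the three edges. The three lines $A_{\tau_i}\subset(\FF^2)^\vee$ have the three distinct directions $\tau_i^\perp$. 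If the assignment violates the parity condition, the three lines are concurrent, $\SSS_1(\Delta,\sigma)=\operatorname{span}(e_{A_1},e_{A_2},e_{A_3})$ is $3$-dimensional while $\SSS_2=0$ and $\F_1^1(\Delta,\sigma)=\tau_1^\perp+\tau_2^\perp+\tau_3^\perp$ is $2$-dimensional, so no isomorphism exists; note that here all pairwise intersections $\bw^1\tau_i^\perp\cap\bw^1\tau_j^\perp$ vanish, so your ``matching overlaps'' criterion detects nothing. If instead the parity condition holds, the lines form a necklace and one has the genuine three-term relation $e_{A_1}+e_{A_2}+e_{A_3}=\sum_s n_\EEE(\sigma,s)\,e_s=0$, matching $\operatorname{Vol}(\tau_1^\perp)+\operatorname{Vol}(\tau_2^\perp)+\operatorname{Vol}(\tau_3^\perp)=0$ in $(\FF^2)^\vee$. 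A complete proof must therefore identify \emph{all} relations on both sides (or count dimensions on both sides, as in Lemma \ref{lem:dim Fp}) and show they correspond, with the parity condition entering precisely to produce the relations of the above type on the $\SSS$-side; this is the work done in \cite[Proposition 2.29]{RenRauSha22} that your last paragraph replaces with an assertion.
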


\begin{proof}
  We can use the exact same filtration and arguments as in 
	\cite[Definition 4.5 and Lemma 4.8]{RenSha18} (for $k=1$)
	and \cite[Definition 2.27 and Proposition 2.29]{RenRauSha22} (for arbitrary $k$).
\end{proof}

\begin{proof}[Proof of Theorem \ref{thm:main}]
  To finish the proof, we use the same spectral sequence argument as in 
  \cite[Theorem 1.5]{RenSha18}.
  The spectral sequence associated to the filtration from Proposition \ref{prop:filtration} has $0$-th page
	$E^0_{p,q} = C_q(\Xi; \SSS_p / \SSS_{p+1})$. Hence by Proposition \ref{prop:filtration} we have
	$E^1_{p,q} = H_q(\Xi; \F_p^k)$. Therefore $\dim(E^1_{p,q}) = h_{q}(\Gamma;\F_p^k) =h^{p,q}(\Delta^k)$
	by Theorem \ref{thm:hodge}. On the other hand, by Proposition \ref{prop:signcosheafhomology}
	we have $b_q(X_{\Gamma, \EEE}, \FF) = \dim(H_q(\Xi; \SSS)) = \sum_p \dim(E^\infty_{p,q})$.
Theorem \ref{thm:main} then follows from the inequality $\dim(E^\infty_{p,q}) \leq \dim(E^1_{p,q})$.
\end{proof}

\begin{proof}[Proof of Theorem \ref{thm:chi}]
  Using the same arguments as in the proof of Theorem \ref{thm:main}, we get 
	\[
	  \chi(X_{\Delta,\EEE}) = \sum_{p,q} (-1)^q
          \dim(E^\infty_{p,q}) = \sum_{p,q} (-1)^q \dim(E^1_{p,q}) =
          \sum_{p,q} (-1)^q h^{p,q}(\Delta^k) = \sigma(\Delta^k),
	  \]
          where the last equality follows from Hodge index theorem, see for
  example \cite[Theorem 6.33]{Voi02}.
\end{proof}

\appendix
\section{A combinatorial lemma}\label{sec:comb lem}
Given  integers $m,k,p,i$, we define the following numbers
\[
\alpha_{m,k,p,i}=\sum_{l,u\ge 1}(-1)^{u}
    {{k}\choose{l}}  {{u-1}\choose{l-1}}  {{m+l}\choose{m-p+u}}  {{u-1}\choose{i}},
\]
and
\[
\beta_{m,k,p,i}=(-1)^{i+1}\sum_{l=0}^{k-1}{{i}\choose{l}}{{m-i}\choose{p-i+l}}
\]

\begin{lemma}\label{lem:identity} 
  For any integers $p\in\Z$ and $m,k,i\ge 0$, one has
  \[
  \alpha_{m,k,p,i}=\beta_{m,k,p,i}.
  \]
\end{lemma}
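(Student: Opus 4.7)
My plan is to reduce the four-fold sum defining $\alpha_{m,k,p,i}$ to a single summation via successive applications of the Vandermonde convolution, and then match the result with $\beta_{m,k,p,i}$ by a coefficient-extraction argument.

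First, I would expand $\binom{m+l}{m-p+u}$ using Vandermonde to decouple $m$ from $l$, introducing an auxiliary index $j$. The resulting sum over $l$ can then be evaluated in closed form via the identity
\[
\sum_l \binom{k}{l}\binom{u-1}{l-1}\binom{l}{j} = \binom{k}{j}\binom{k+u-j-1}{u-j},
\]
which follows from $\binom{k}{l}\binom{l}{j} = \binom{k}{j}\binom{k-j}{l-j}$ and a single Vandermonde. After the substitution $u' = u-j$, I would apply Vandermonde a second time to expand $\binom{u-1}{i} = \binom{u'+j-1}{i}$, and then collapse the sum over $j$ using the orthogonality relation $\sum_j (-1)^j \binom{k}{j}\binom{j}{N} = (-1)^k\delta_{N,k}$. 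This forces the new index to equal $i-k$, so contributes only when $i \geq k$. Carefully subtracting the boundary contribution corresponding to the forbidden term $u'=j=0$ (which the formal manipulations introduce but which is absent from the original sum) yields, in the case $i\ge k$,
\[
\alpha_{m,k,p,i} = (-1)^{i+1}\sum_{l=0}^{k-1}\binom{i-k+l}{l}\binom{m-i+k-l-1}{m-p-l},
\]
while for $i < k$ only the boundary term survives, giving $(-1)^{i+1}\binom{m}{p}$, which matches $\beta_{m,k,p,i}$ directly by Vandermonde.

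For $i\ge k$, the identity reduces to the equality of the above sum with $\sum_{l=0}^{k-1}\binom{i}{l}\binom{m-i}{m-p-l}$ (the $\beta$-side after symmetrizing the inner binomial). Both are truncated sums whose extensions to $l\ge 0$ equal $\binom{m}{p}$: the $\alpha$-side via the identity $\sum_l\binom{a+l}{l}\binom{b-l}{c-l}=\binom{a+b+1}{c}$ (applied with $a=i-k$, $b=m-i+k-1$, $c=m-p$), and the $\beta$-side via classical Vandermonde. Hence the equality reduces to matching the complementary tails $\sum_{l\ge k}$ of both. After reindexing $l=k+r$, I would compute each tail by coefficient extraction: writing $\binom{i+r}{i-k}=[x^{i-k}](1+x)^{i+r}$ and $\binom{m-i-1-r}{p+k-i-1}=[y^{p+k-i-1}](1+y)^{m-i-1-r}$, the geometric series $\sum_{r\ge 0}((1+x)/(1+y))^r = (1+y)/(y-x)$ converts the $\alpha$-tail into $[x^{i-k}][y^{p+k-i-1}](1+x)^i(1+y)^{m-i}/(y-x)$, and expanding $1/(y-x) = y^{-1}\sum_n(x/y)^n$ produces $\sum_{s=0}^{i-k}\binom{i}{s}\binom{m-i}{p-s}$, which is exactly the $\beta$-tail after the symmetry $\binom{m-i}{m-p-l}=\binom{m-i}{p+l-i}$ and reindexing $l\mapsto i-l$.

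The main obstacle is the careful bookkeeping throughout the chain of Vandermonde substitutions: tracking index ranges, signs, and in particular the convention $\binom{-1}{N}=(-1)^N$ for negative arguments that arises at boundary points; one must explicitly subtract the forbidden boundary term $u=0$ (which would otherwise spuriously contribute $(-1)^i\binom{m}{p}$), and treat the cases $i<k$ and $i\ge k$ separately according to whether the constraint $e = i-k \ge 0$ is satisfiable. A secondary technical point is the identification of the less-classical identity $\sum_l\binom{a+l}{l}\binom{b-l}{c-l}=\binom{a+b+1}{c}$, which is essential for producing $\binom{m}{p}$ as the common full-sum value of the two truncated sums but admits a short generating-function proof.
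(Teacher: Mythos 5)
Your approach is genuinely different from the paper's: the paper first reduces to $m=0$ via the Pascal recursion $\alpha_{m,k,p,i}=\alpha_{m-1,k,p,i}+\alpha_{m-1,k,p-1,i}$ (and the same for $\beta$), then evaluates $\alpha_{0,k,p,i}$ in closed form using two identities from Graham--Knuth--Patashnik and checks that both families satisfy a common three-term recursion in $(k,p,i)$ with common initial values. You instead attack the general-$m$ sum directly with Vandermonde convolutions and coefficient extraction. The first half of your computation is sound: the chain of Vandermonde steps, the orthogonality $\sum_j(-1)^j\binom{k}{j}\binom{j}{N}=(-1)^k\delta_{N,k}$, and the subtraction of the spurious boundary term $(-1)^i\binom{m}{p}$ correctly yield $\alpha_{m,k,p,i}=(-1)^{i+1}\binom{m}{p}=\beta_{m,k,p,i}$ for $i<k$, and for $i\ge k$ the single sum $(-1)^k\sum_{u\ge1}(-1)^u\binom{k+u-1}{u}\binom{m}{m-p+u}\binom{u-1}{i-k}$.

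There is, however, a genuine gap in the second half. Your claimed closed form $\alpha_{m,k,p,i}=(-1)^{i+1}\sum_{l=0}^{k-1}\binom{i-k+l}{l}\binom{m-i+k-l-1}{m-p-l}$ is false whenever $i>m$: for $(m,k,p,i)=(0,1,1,1)$ the defining sum gives $\alpha_{0,1,1,1}=1$ (only $l=1$, $u=2$ contributes), while your formula gives $\binom{0}{0}\binom{-1}{-1}=0$. The same failure occurs in your rewriting of $\beta$: the symmetrization $\binom{m-i}{p-i+l}=\binom{m-i}{m-p-l}$ requires $m-i\ge 0$ (here $\binom{-1}{0}=1\neq 0=\binom{-1}{-1}$). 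So the argument at best covers $i\le m$, whereas the lemma is asserted for all $m,k,i\ge 0$; note that the paper's own proof cannot avoid the regime $i>m$, since it reduces everything to $m=0$ with $i$ arbitrary. Two further points need attention even when $i\le m$: you never actually derive the displayed closed form from the single $u$-sum above (this is an additional nontrivial transformation not covered by the steps you list), and the tail computation sums the series $\sum_r\bigl((1+x)/(1+y)\bigr)^r$, whose ratio has constant term $1$ and hence does not converge in the ring of formal power series; making this rigorous requires fixing an expansion of $1/(y-x)$, and this is precisely the kind of step that silently breaks when $m-i<0$.
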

\begin{proof}
  First note that by Pascal's relation, we have
  \[
  \alpha_{m,k,p,i}=  \alpha_{m-1,k,p,i}+\alpha_{m-1,k,p-1,i}
  \]
  and
    \[
  \beta_{m,k,p,i}=  \beta_{m-1,k,p,i}+\beta_{m-1,k,p-1,i}.
  \]
  Hence it is enough to prove the $m=0$ case:
  \[
    \alpha_{0,k,p,i}=\beta_{0,k,p,i}.
    \]
To do so, we prove that both sequences of numbers satisfy the
same recursion, and have the same initial values.

    \medskip
\noindent {\bf Step 1: rewriting of $ \alpha_{0,k,p,i}$.}
    We have
    \begin{align*}
      \alpha_{0,k,p,i}&=
      \sum_{l,u\ge 1}(-1)^{u}
    {{k}\choose{l}}  {{u-1}\choose{l-1}}  {{l}\choose{u-p}}  {{u-1}\choose{i}}.
    \end{align*}
    Since
    \[
    {{k}\choose{l}}  {{l}\choose{u-p}}= {{k}\choose{u-p}} {{k+p-u}\choose{l+p-u}},
    \]
    we obtain
 \begin{align*}
 \alpha_{0,k,p,i}&=
 \sum_{u\ge 1}(-1)^{u}  {{k}\choose{u-p}}{{u-1}\choose{i}}\left(
 \sum_{l\ge 1} {{u-1}\choose{l-1}}  {{k+p-u}\choose{l+p-u}}\right).
 \end{align*}
 We deduce from {\cite[Identity 5.23]{GKP94}} that for any $u\ge 1$,
 \[
 \sum_{l\ge 1} {{u-1}\choose{l-1}}  {{k+p-u}\choose{l+p-u}}=
     {{k+p-1}\choose{p}},
     \]
     which gives
 \begin{align*}
 \alpha_{0,k,p,i}&=
 \sum_{u\ge 1}(-1)^{u}  {{k}\choose{u-p}}{{u-1}\choose{i}} {{k+p-1}\choose{p}}.
 \end{align*}
 Since $k\ge 0$, we deduce from {\cite[Identity 5.24]{GKP94}} that
 \[
 \sum_{u\ge 1}(-1)^{u}  {{k}\choose{u-p}}{{u-1}\choose{i}} =\left\{
 \begin{array}{ll}
   (-1)^{k+p}  {{p-1}\choose{i-k}} &\mbox{if }p\ge 1
   \\
   \\   (-1)^{k}  {{-1}\choose{i-k}} -
        {{-1}\choose{i}}{{k-1}\choose{0}}
        = (-1)^{k}  {{-1}\choose{i-k}} - (-1)^{i}   &\mbox{if }p=0
 \end{array}
 \right..
 \]
 We finally obtain
  \begin{align*}
 \alpha_{0,k,p,i}&=\left\{
 \begin{array}{ll}
   (-1)^{k+p}  {{p-1}\choose{i-k}} {{k+p-1}\choose{p}} &\mbox{if }p\ne 0
   \\
   \\     0 &\mbox{if }p= 0 \mbox{ and } i\ge k
   \\
  \\    (-1)^{i+1}  &\mbox{if }p= 0 \mbox{ and } i\le k-1
 \end{array}
 \right..
 \end{align*}

  \medskip
  \noindent {\bf Step 2: recursion for $ \alpha_{0,k,p,i}$.}
  Applying twice Pascal's relation, we have for $p\ge 2$ and $k,i\ge 1$
  \begin{align*}
    \alpha_{0,k,p,i}&=  (-1)^{k+p}  {{p-1}\choose{i-k}}
          {{k+p-2}\choose{p}}
          +  (-1)^{k+p}  {{p-1}\choose{i-k}} {{k+p-2}\choose{p-1}}
\\ &= -   \alpha_{0,k-1,p,i-1} +    (-1)^{k+p}  {{p-2}\choose{i-k}}
   {{k+p-2}\choose{p-1}} +    (-1)^{k+p}  {{p-2}\choose{i-k-1}}
   {{k+p-2}\choose{p-1}}
   \\&= -   \alpha_{0,k-1,p,i-1} -
   \alpha_{0,k,p-1,i}-\alpha_{0,k,p-1,i-1}.
  \end{align*}
  One sees easily from Step 1 that this recursion actually holds for
 all $p\in \Z$. 
Hence all numbers $ \alpha_{0,k,p,i}$ can be computed from the
following  initial values:
\[
\alpha_{0,k,p,i}=0 \mbox{ for }p<0,\quad
\alpha_{0,0,p,i}=0 ,\quad
\alpha_{0,k,p,0}=\left\{
 \begin{array}{ll}
  -1 &\mbox{if }p= 0\mbox{ and }k\ge 1
   \\     0 &\mbox{otherwise }
 \end{array}
 \right..
\]

  \medskip
  \noindent {\bf Step 3: recursion for $ \beta_{0,k,p,i}$.}
  We have
  \begin{align*}
    \beta_{0,k,p,i}&=
    (-1)^{i+1}\sum_{l=0}^{k-1}{{i}\choose{l}}{{-i}\choose{p-i+l}}
    \\ &= (-1)^{p+1}\sum_{l=0}^{k-1}(-1)^{l}{{i}\choose{l}}{{p-1+l}\choose{p-i+l}}.
  \end{align*}
  In particular we see that
  \[
    \beta_{0,k,p,i}=0  \qquad\mbox{if } p<0.
    \]
    Applying again twice  Pascal's rule, we obtain
  \begin{align*}
   \beta_{0,k,p,i}&=
   (-1)^{p+1}\sum_{l=0}^{k-1}(-1)^{l}{{i}\choose{l}}{{p-2+l}\choose{p-1-i+l}}+
   (-1)^{p+1}\sum_{l=0}^{k-1}(-1)^{l}{{i}\choose{l}}{{p-2+l}\choose{p-i+l}}
   \\
   \\ &= -  \beta_{0,k,p-1,i} +
   (-1)^{p+1}\sum_{l=0}^{k-1}(-1)^{l}{{i-1}\choose{l-1}}{{p-2+l}\choose{p-i+l}}+
   (-1)^{p+1}\sum_{l=0}^{k-1}(-1)^{l}{{i-1}\choose{l}}{{p-2+l}\choose{p-i+l}}
   \\
   \\ &= -  \beta_{0,k,p-1,i} - \beta_{0,k-1,p,i-1} -\beta_{0,k,p-1,i-1}.
  \end{align*}
In particular we see that both sequences $\alpha_{0,k,p,i}$ and $\beta_{0,k,p,i}$
 satisfy the
same recursion. Furthermore we have the following initial values 
\[
\beta_{0,k,p,i}=0 \mbox{ for }p<0,\quad
\beta_{0,0,p,i}=0 ,\quad
\beta_{0,k,p,0}=\left\{
 \begin{array}{ll}
  -1 &\mbox{if }p= 0\mbox{ and }k\ge 1
   \\     0 &\mbox{otherwise }
 \end{array}
 \right..
\]
Since both sequences $\alpha_{0,k,p,i}$ and $\beta_{0,k,p,i}$ have the
same initial values,  they  coincide.
\end{proof}

\bibliographystyle{alpha}
\bibliography{./Biblio}

\end{document}